\documentclass[12pt, letterpaper]{amsart}
\ifdefined\pdfminorversion
\fi
\ifdefined\pdfobjcompresslevel
\fi
\usepackage[a4paper,margin=1in]{geometry}
\usepackage[T1]{fontenc}
\usepackage{microtype}
\usepackage{amsmath,amssymb,amsthm,mathtools}
\usepackage{mathrsfs}
\usepackage{tikz-cd}
\usepackage{xcolor}
\usepackage{xurl}
\usepackage{hyperref}
\hypersetup{colorlinks=true,linkcolor=blue!45!black,citecolor=blue!45!black,urlcolor=blue!45!black}
\newtheorem{theorem}{Theorem}[section]
\newtheorem{proposition}[theorem]{Proposition}
\newtheorem{lemma}[theorem]{Lemma}
\newtheorem{corollary}[theorem]{Corollary}
\newtheorem{conjecture}[theorem]{Conjecture}
\theoremstyle{definition}
\newtheorem{definition}[theorem]{Definition}
\theoremstyle{remark}
\newtheorem{remark}[theorem]{Remark}
\newcommand{\Q}{\mathbb Q}
\newcommand{\C}{\mathbb C}
\newcommand{\Z}{\mathbb Z}
\newcommand{\Hom}{\operatorname{Hom}}
\newcommand{\End}{\operatorname{End}}
\DeclareMathOperator{\ord}{ord}
\numberwithin{equation}{section}

\title{Strong Weil Degree Divisibility at Higher Levels}

\author{Daeyeol Jeon}
\address{Department of Mathematics Education, Kongju National University, Gongju 32588, Republic of Korea}
\email{dyjeon@kongju.ac.kr}

\author{Yongjae Kwon}
\address{Department of Mathematics Education, Kongju National University, Gongju 32588, Republic of Korea}
\email{211049@kongju.ac.kr}
\email{kwon314159@gmail.com}

\subjclass[2020]{Primary 11G18; Secondary 11G05, 11F11, 14H40}
\keywords{modular curves, modular parametrizations, modular degrees, optimal quotients, Manin constants, degeneracy maps}

\date{}

\begin{document}

\begin{abstract}
Let \(\pi_E:X_0(M)\to E\) be the strong Weil parametrization with Manin
constant \(c_E\). We prove \(\deg\pi_E\mid c_E^{\Omega(N/M)}\deg g\) for
every multiple \(N\) of \(M\) and every nonconstant morphism
\(g:X_0(N)\to E'\) over \(\Q\), where \(E'\) is \(\Q\)-isogenous to \(E\)
and \(\Omega\) counts prime factors with multiplicity.
When \(c_E=1\), as is known for squarefree \(M\), the modular degree at
level \(M\) therefore divides every such degree at every higher level.
As an application of the divisibility theorem, we prove that no
\(X_0(N)/\Q\) admits a morphism over \(\Q\) of positive odd degree at most
\(1645\) to an elliptic curve of positive \(\Q\)-rank.
For a fixed target \(E'\) and a generator \(u:E\to E'\), we also prove that
if the Manin constant \(c_{u\circ\pi_E}=1\), the old homomorphisms induced by
degeneracy maps form an integral basis of \(\Hom_\Q(J_0(N),E')\), and the
old degree matrix determines the exact morphism degrees.
The proofs bound denominators in the rational old basis.
The divisibility and lattice results extend to compatible towers of
intermediate modular curves, including the \(X_1\)-tower.
\end{abstract}

\maketitle

\section{Introduction}
Let \(E/\Q\) be the \(X_0(M)\)-optimal curve, also called the strong Weil curve,
of conductor \(M\). Let \(j_M:X_0(M)\to J_0(M)\) be the Abel--Jacobi map based
at the cusp \(\infty\). Write \(\pi_E:X_0(M)\to E\) and
\(\Phi_E:J_0(M)\to E\) for the strong Weil parametrization and the induced optimal quotient of
Jacobians, so that \(\pi_E=\Phi_E\circ j_M\). At the minimal level \(M\), the
universal property of the optimal \(E\)-isogenous quotient
\cite{DerickxOrlic} gives an integral
description of the Hom group. If \(E'/\Q\) is in the
\(\Q\)-isogeny class of \(E\), then composition with \(\Phi_E\) induces an
isomorphism of \(\Z\)-modules
\begin{equation}\label{eq:intro-level-M-hom}
        \Hom_\Q(E,E')
        \xrightarrow{\ \sim\ }
        \Hom_\Q(J_0(M),E'),
        \qquad
        v\longmapsto v\circ\Phi_E .
\end{equation}
Thus, if \(u:E\to E'\) generates the free rank-one
\(\Z\)-module \(\Hom_\Q(E,E')\),
then \(u\circ\Phi_E\) is a \(\Z\)-basis of
\(\Hom_\Q(J_0(M),E')\).
Consequently, every nonconstant morphism \(g:X_0(M)\to E'\) over
\(\Q\) factors through \(\pi_E\) after translation, and
\(\deg\pi_E\mid\deg g\).

The level-\(M\) argument suggests two questions for higher levels. Let \(N\) be a
multiple of \(M\), and put \(R=N/M\). For \(r\mid R\), let
\(\iota_{r,N,M}:X_0(N)\to X_0(M)\) be the degeneracy map analytically induced by
\(\tau\mapsto r\tau\). For a fixed elliptic curve \(E'/\Q\) in the
\(\Q\)-isogeny class of \(E\) and a nonzero isogeny \(u:E\to E'\), define
\(\Phi_{r,u}=u\circ\Phi_E\circ\iota_{r,N,M,*}:J_0(N)\to E'\).
The first question is whether the divisibility \(\deg\pi_E\mid\deg g\) persists
for nonconstant morphisms \(g:X_0(N)\to E'\). The second, fixed-target, question
asks whether the old homomorphisms \(\{\Phi_{r,u}:r\mid R\}\) form an integral
basis of \(\Hom_\Q(J_0(N),E')\), or at least whether the denominators of the old
lattice they generate can be bounded explicitly.

The main results answer both questions up to explicit factors controlled by
Manin constants. When \(c_E=1\), the divisibility
\(\deg\pi_E\mid\deg g\) holds. When \(c_{u\circ\pi_E}=1\), the old maps form
a \(\Z\)-basis of the full Hom group.

Let \(f_E\) be the normalized rational newform attached to \(E\), and let
\(c_E\) be the Manin constant of \(\pi_E\). Choose the sign of a minimal
N\'eron differential \(\omega_E\) on \(E\) so that
\(\pi_E^*\omega_E=c_E f_E(q)dq/q\), with \(c_E>0\).
For a positive integer \(n\), put \(\Omega(n)=\sum_p\ord_p(n)\), with
\(\Omega(1)=0\).

\begin{theorem}\label{thm:main}
Let \(E/\Q\) be the strong Weil curve of conductor \(M\), and let
\(\pi_E:X_0(M)\to E\) be the strong Weil parametrization. Let \(c_E>0\) be the
Manin constant of \(\pi_E\). Then, for every multiple \(N\) of \(M\), every
elliptic curve \(E'\) over \(\Q\) in the \(\Q\)-isogeny class of \(E\), and every
nonconstant morphism \(g:X_0(N)\to E'\) over \(\Q\), one has
\[
        \deg\pi_E\mid c_E^{\Omega(N/M)}\deg g.
\]
In particular, if \(c_E=1\), then \(\deg\pi_E\mid\deg g\).
\end{theorem}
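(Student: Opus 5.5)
Translating \(g\) so that \(g(\infty)=0\), we may write \(g=\Psi\circ j_N\) with \(\Psi\in\Hom_\Q(J_0(N),E')\). Then \(\deg g=\Psi\Psi^\vee\) in \(\End(E')=\Z\), where \(\Psi^\vee:E'\to J_0(N)\) is the dual composed with autoduality. The plan has three stages. First, express \(\Psi\) in the rational old basis. Second, bound its denominators by powers of \(c_E\). Third, compute \(\Psi\Psi^\vee\) from the old degree matrix and show it is divisible by \(\deg\pi_E\) up to that bounded denominator.

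\emph{Step 1: rational old basis.} By multiplicity one and the theory of oldforms, the \(f_E\)-isotypic part of \(H^0(X_0(N),\Omega^1)\) has basis \(f_E(q^r)\,dq/q\) for \(r\mid R\). Hence \(\Hom_\Q(J_0(N),E')\otimes\Q\) has \(\Q\)-basis \(\{\Phi_{r,u}:r\mid R\}\). Write \(\Psi=\sum_r a_r\Phi_{r,u}\) with \(a_r\in\Q\). Pulling back \(\omega_{E'}\) gives \(\Psi^*\omega_{E'}=c_E\lambda\sum_r a_r r f_E(q^r)dq/q\), where \(u^*\omega_{E'}=\lambda\omega_E\).

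\emph{Step 2: denominators.} We want \(D\,a_r\in\Z\) for some \(D\) dividing \(c_E^{\Omega(R)}\). The intended route is an induction along a chain \(M=N_0\mid N_1\mid\cdots\mid N_k=N\) of prime steps, with \(k=\Omega(R)\).
\begin{itemize}
\item At level \(M\), the isomorphism (\ref{eq:intro-level-M-hom}) says every \(\Psi\) is an integer multiple of \(u\circ\Phi_E\), so there is no denominator.
\item For a prime step \(N'\to N'p\), use the two degeneracy maps \(\alpha=\iota_{1}\) and \(\beta=\iota_{p}\). Any homomorphism \(J_0(N'p)\to E'\) restricted to the old part decomposes through \(\alpha_*\) and \(\beta_*\).
\item Apply the \(q\)-expansion principle over \(\Z\), which is integrality of \(\Psi^*\omega_{E'}\) since it is a Néron differential pullback. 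The coefficients \(c_E\lambda a_r r\) along \(f_E(q^r)\) must then make \(\sum_r c_E\lambda a_r r f_E(q^r)\) an integral \(q\)-series.
\item Peel off the coefficients one prime at a time. Use \(a_1(f_E)=1\) to extract the leading coefficient. Subtract the integral contributions already known, and use Hecke/trace maps \(\alpha_*\) and \(\beta_*\), composed with the inverse of the level-\(N'\) statement, to reduce to level \(N'\).
\end{itemize}
Each prime step introduces at most one factor \(c_E\). This is because integrality of the differential only controls \(c_E a_r\), not \(a_r\): the morphism with differential \(f_E\,dq/q\) is \(\Phi_E\) divided by \(c_E\). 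This stage is the main obstacle. The difficulty is converting integrality of the differential, which lives in the cotangent space, into integrality of the homomorphism. The key tool is that \(\Hom_\Q(J_0(N),E')\to\) (differentials) is injective with cokernel controlled at the prime level. I would first try to do this using the Néron model and the fact that saturation in \(\Hom\) is detected by the lattice of differentials with integral \(q\)-expansion, each degeneracy step contributing the index \(c_E\).

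\emph{Step 3: degree.} Set \(\Psi=\frac1D\sum_r b_r\Phi_{r,u}\) with \(b_r\in\Z\). Then
\[
D^2\deg g=\sum_{r,s}b_rb_s\,\Phi_{r,u}\Phi_{s,u}^\vee .
\]
Each entry factors as \(u\Phi_E\iota_{r,*}\iota_s^*\Phi_E^\vee u^\vee\). The inner map \(\iota_{r,*}\iota_s^*\) is a Hecke-type correspondence on \(J_0(M)\) that acts on \(E\) by an integer scalar \(h_{rs}\). So the entry equals \(h_{rs}\,u\Phi_E\Phi_E^\vee u^\vee=h_{rs}\deg u\,\deg\pi_E\), using \(\Phi_E\Phi_E^\vee=\deg\pi_E\). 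Hence \(\deg\pi_E\mid D^2\deg g\).

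This is still too weak, since it gives \(D^2\) rather than \(D\). To fix it, I would apply the argument asymmetrically: compute \(D\,\Psi\Psi^\vee=\sum_r b_r\,u\Phi_E\,\iota_{r,*}\Psi^\vee\). Here \(\Phi_E\iota_{r,*}\Psi^\vee\in\End(E)\)-multiples factor through \(\Phi_E\Phi_E^\vee\), via the level-\(M\) universality applied to the integral map \(\Psi\circ\iota_r^*\in\Hom(J_0(M),E')\), which is an integer multiple of \(u\Phi_E\). Then \(\Psi\iota_r^*=m_r u\Phi_E\) with \(m_r\in\Z\), and
\[
D\deg g=\sum_r b_r m_r\deg u\cdot\deg\pi_E ,
\]
so \(\deg\pi_E\mid D\deg g\mid c_E^{\Omega(R)}\deg g\).
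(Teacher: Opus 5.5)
Your Step 1 is correct, and so is the asymmetric computation in Step 3. Writing $\Psi\circ\iota_s^*=m_s\,u\circ\Phi_E$ with $m_s\in\Z$ is exactly the paper's Lemma~\ref{lem:first-n}. It gives $\deg(u\circ\pi_E)\mid D\deg g$ for any common denominator $D$ of the old-basis coordinates of $\Psi$. The gap is in Step 2: it aims at a false target, and it contains no working mechanism.

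\textbf{The target is false.} You want $D\mid c_E^{\Omega(R)}$ for an $E'$-valued $\Psi$ in the basis $\Phi_{r,u}$. But the $q$-expansion only sees $c_E\lambda=c_{u\circ\pi_E}$, and $\lambda$ genuinely contributes. Take the paper's example $g:X_0(95)\to 19\mathrm{a}3$ of degree $2$, where $c_E=1$ and $\deg u=3$. Your own Step 3 gives $3\deg\pi_E\mid 2D$, so $3\mid D$. The paper avoids this by bounding the denominators of $\widehat u\circ\Psi$ instead. That map has target $E$, whose constant is $c_E$. This gives a common denominator for $\Psi$ dividing $c_E^{\Omega(R)}\deg u$. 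The extra $\deg u$ then cancels against $\deg(u\circ\pi_E)=\deg\pi_E\deg u$ in your Step 3.

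\textbf{The mechanism is missing, even for target $E$.} Write $x_r$ for your coordinates.
\begin{itemize}
\item Integrality at $\infty$ gives only $r\,c_E\,x_r\in\Z$, since $\iota_r^*(f_E\,dq/q)=r f_E(q^r)\,dq/q$. The factor $r$ is not a power of $c_E$.
\item The constraints from $\Psi\circ\iota_s^*$ give at best $\nu\mid\det A_{E,N}$. For $R=p\nmid M$ this is $(p+1)^2-a_p^2$, which need not be supported on primes dividing $c_E$.
\item Your principle that saturation is detected by integral $q$-expansions is false: $c_E^{-1}\Phi_E$ pulls $\omega_E$ back to $f_E\,dq/q$ but is not integral when $c_E>1$.
\end{itemize}

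The paper supplies two ideas you do not have.
\begin{itemize}
\item \emph{Second cusp.} The Fricke involution satisfies $w_N^*\eta_r=\pm\eta_{R/r}$. So the second cusp gives $(R/r)c_Ex_r\in\Z$, hence $\gcd(r,R/r)c_Ex_r\in\Z$. This settles every prime $\ell$ with $\ord_\ell(R)\le 1$.
\item \emph{Connected-kernel descent.} If $\ell^2\mid R$, subtract the terms with $\ell\nmid r$, which the two-cusp bound already controls. The remainder is $H\circ(\delta_\ell)_*$. Total ramification of $\delta_\ell$ at $\infty$ makes $\ker(\delta_\ell)_*$ geometrically connected. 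Hence precomposition with $(\delta_\ell)_*$ has saturated image, so $H$ is $\ell$-integral at level $N/\ell$ and the induction proceeds.
\end{itemize}
Without this connected-kernel step, integrality of $H\circ(\delta_\ell)_*$ does not imply integrality of $H$. Your count of ``one factor of $c_E$ per prime step'' then has no justification.
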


A sharper divisibility statement for \(N>M\) is given in
Corollary~\ref{cor:x0-sharp-specialization}.
By \v{C}esnavi\v{c}ius \cite{CesnaviciusManinSemistable},
\(c_E=1\) when \(M\) is squarefree. Cremona's tables
\cite{CremonaData} verify \(c_E=1\) for \(M<400000\).
The divisibility \(\deg\pi_E\mid\deg g\) is therefore unconditional in these cases.

For a nonzero isogeny \(u:E\to E'\), write \(c_{u\circ\pi_E}\) for the positive
integer determined by
\((u\circ\pi_E)^*\omega_{E'}=c_{u\circ\pi_E}f_E(q)dq/q\), where
\(\omega_{E'}\) is a minimal N\'eron differential. If
\(u^*\omega_{E'}=c_u\omega_E\), then
\(c_{u\circ\pi_E}=c_Ec_u\) by Lemma \ref{lem:fixed-target-manin}. In particular,
when \(E'=E\) and \(u=\mathrm{id}_E\), the condition
\(c_{u\circ\pi_E}=1\) is precisely \(c_E=1\). When
\(c_{u\circ\pi_E}=1\), the level-\(M\) factorization extends integrally to every
higher level.

\begin{theorem}\label{thm:lattice-main}
With the notation above, if
\(u\) generates \(\Hom_\Q(E,E')\) and \(c_{u\circ\pi_E}=1\), then
\(\{\Phi_{r,u}:r\mid R\}\) is a \(\Z\)-basis of
\(\Hom_\Q(J_0(N),E')\).
\end{theorem}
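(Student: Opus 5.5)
The plan is to show first that the old homomorphisms form a \(\Q\)-basis, and then that the rational coordinates of an arbitrary \(\varphi\in\Hom_\Q(J_0(N),E')\) in this basis are integers. I would bound those denominators from two sides, the cusp \(\infty\) and the level-\(M\) universal property~\eqref{eq:intro-level-M-hom}. The integrality of the \(p\)-parts (Step~3) is the part I have not fully worked out.

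\emph{Step 1: rational basis.} By Atkin--Lehner--Li theory, up to isogeny the \(E\)-isotypic part of \(J_0(N)\) is \(E^{\sigma_0(R)}\). Hence \(\Hom_\Q(J_0(N),E')\otimes\Q\) has dimension \(\sigma_0(R)\). Pulling back \(\omega_{E'}\) along \(\Phi_{r,u}\) gives
\[
\Phi_{r,u}^*\omega_{E'} = c_{u\circ\pi_E}\, r\, f_E(q^r)\,dq/q ,
\]
where the factor \(r\) comes from \(d(q^r)/q^r=r\,dq/q\). The forms \(f_E(q^r)\), \(r\mid R\), are linearly independent. Since \(\varphi\mapsto\varphi^*\omega_{E'}\) is injective, the \(\Phi_{r,u}\) form a \(\Q\)-basis. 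So every \(\varphi\) can be written uniquely as \(\varphi=\sum_{r\mid R}a_r\Phi_{r,u}\) with \(a_r\in\Q\).

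\emph{Step 2: bound from the cusp \(\infty\).} The pullback \(\varphi^*\omega_{E'}\) of a N\'eron differential under \(\varphi\circ j_N\) has integral \(q\)-expansion at \(\infty\). This follows from the \(q\)-expansion principle and the smoothness of \(X_0(N)\) over \(\Z\) along the cusp \(\infty\), as in the standard proof that Manin constants are integers. With \(c_{u\circ\pi_E}=1\), set \(b_r=r\,a_r\), so that
\[
\varphi^*\omega_{E'}=\sum_{r\mid R} b_r f_E(q^r)\,dq/q .
\]
The coefficient of \(q^r\) in this series is \(b_r+\sum_{s\mid r,\,s<r}b_s\,a_{r/s}(f_E)\). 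Induction on \(r\) along divisibility then gives \(b_r\in\Z\). This shows \(r\,a_r\in\Z\) for every \(r\mid R\).

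\emph{Step 3: removing the factor \(r\) (main obstacle).} I would argue one prime at a time, climbing the tower \(M\mid N_1\mid\cdots\mid N\) by one prime factor per step. This is consistent with the exponent \(\Omega(N/M)\) in Theorem~\ref{thm:main}, so I expect this step to be the delicate one. At a single step \(N'\to N'p\), the old basis pulled up from level \(N'\) consists of the maps \(\Phi\circ\iota_{1,*}\) and \(\Phi\circ\iota_{p,*}\). Step~2 already gives integrality of the coordinates away from \(p\).

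For the \(p\)-part, I would first try composing \(\varphi\) with the pullbacks \(\iota_{s,N,M}^*:J_0(M)\to J_0(N)\). By \eqref{eq:intro-level-M-hom}, each \(\varphi\circ\iota_s^*\) is an integer multiple of \(u\circ\Phi_E\). The products \(\iota_{r,*}\iota_s^*\) act on the \(E\)-part of \(J_0(M)\) by explicit integers, built from \(a_p(f_E)\), \(p\), and the degrees of the degeneracy maps. This gives \(a\,G\in\Z^{\sigma_0(R)}\) for an explicit integer Gram-type matrix \(G\). The goal is then to check that the two constraints \(r\,a_r\in\Z\) and \(aG\in\Z^{\sigma_0(R)}\) together force \(a_r\in\Z\). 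This should be a determinant/gcd computation for the \(2\times2\) blocks at each prime \(p\), using that \(\gcd(p,\deg)\) and the \(a_p\)-entries cooperate.

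If this gcd argument leaves a residual power of \(p\), my fallback is to use \(q\)-expansion integrality at a second cusp, transported by the Atkin--Lehner involution \(w_p\). This involution exchanges \(f_E(q)\) and \(f_E(q^p)\) up to a factor \(p\) and the sign \(\pm\). It would therefore bound the denominator of the \(r=1\) coordinate symmetrically to Step~2. The assumption \(c_{u\circ\pi_E}=1\) enters exactly so that the normalization in Step~2 introduces no extra denominators. Once every \(a_r\in\Z\), the set \(\{\Phi_{r,u}:r\mid R\}\) is a \(\Z\)-basis of \(\Hom_\Q(J_0(N),E')\).
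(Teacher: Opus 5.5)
\textbf{Gap: Step 3 fails at primes with $\ell^2\mid R$.} Your Steps 1--2, together with the fallback of using $q$-integrality at a second cusp, reproduce the paper's rational-basis theorem and its two-cusp estimate. For the second cusp, use the Fricke involution $w_N$: it is defined over $\Q$ and satisfies $w_N^*\eta_r=\pm\eta_{R/r}$, where $\eta_r=\iota_r^*(f_E\,dq/q)$. Combined with Step 2 this gives $\gcd(r,R/r)\,a_r\in\Z$, which settles every prime $\ell$ with $\ord_\ell(R)\le1$.

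When $\ell^2\mid R$, neither of your tools suffices. The $2\times2$-block picture also breaks down there: passing from $M\ell$ to $M\ell^2$ adds one old map, not a second copy. Concretely, take $R=\ell^2$ and the candidate $G=\ell^{-1}\Phi_{\ell,u}$, with coordinates $(0,\ell^{-1},0)$ in the basis indexed by $1,\ell,\ell^2$.
\begin{itemize}
\item \emph{Both cusps.} With $c_{u\circ\pi_E}=1$, $G$ pulls $\omega_{E'}$ back to $f_E(q^\ell)\,dq/q$ at $\infty$, and to $\pm f_E(q^\ell)\,dq/q$ after $w_N$. Both are integral.
\item \emph{Your Gram constraint.} Write $\alpha_r=\iota_{r,M\ell,M}$ and $\beta_r=\iota_{r,M\ell^2,M\ell}$. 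Then $\iota_1=\alpha_1\beta_1$, $\iota_\ell=\alpha_\ell\beta_1=\alpha_1\beta_\ell$, $\iota_{\ell^2}=\alpha_\ell\beta_\ell$, and $\deg\beta_1=\deg\beta_\ell=\ell$. Hence $\iota_{1,*}\iota_\ell^*=\ell\,\alpha_{1,*}\alpha_\ell^*$, $\iota_{\ell,*}\iota_\ell^*=[\ell\deg\alpha_1]$, and $\iota_{\ell^2,*}\iota_\ell^*=\ell\,\alpha_{\ell,*}\alpha_1^*$. So the $\ell$-th column of $A_{E,N}$ is divisible by $\ell$, and $A_{E,N}a\in\Z^3$ holds.
\item \emph{Intermediate level.} Composing with pullbacks from level $M\ell$ gives nothing new: $G\circ\beta_1^*=\Phi_{\ell,u,M\ell,M}$ and $G\circ\beta_\ell^*=\Phi_{1,u,M\ell,M}$ are integral.
\end{itemize}
So no determinant or gcd computation with these data can force $a_\ell\in\Z$.

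\textbf{Missing idea.} What excludes this candidate is saturation under \emph{pushforward} along a degeneracy map, not constraints coming from pullbacks. For $\ell^2\mid N$, the map $\delta_\ell=\iota_{\ell,N,N/\ell}$ has degree $\ell$ and is totally ramified at $\infty$, since $q_{N/\ell}\mapsto q_N^\ell$.
\begin{itemize}
\item Total ramification rules out any nontrivial unramified intermediate cover. So $\delta_\ell$ is surjective on $\pi_1$ and on $H_1$.
\item By analytic uniformization, $(\delta_\ell)_*:J_0(N)\to J_0(N/\ell)$ is therefore surjective with geometrically connected kernel.
\item Consequently, if $H\circ(\delta_\ell)_*=nG'$ with $G'$ integral, then $H$ itself is divisible by $n$.
\end{itemize}
In the example, $\Phi_{\ell,u}=\Phi_{1,u,M\ell,M}\circ\beta_{\ell,*}$ with $\beta_\ell=\delta_\ell$. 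So integrality of $\ell^{-1}\Phi_{\ell,u}$ would make $\ell^{-1}\Phi_{1,u,M\ell,M}$ integral. But its pullback $\ell^{-1}f_E\,dq/q$ is not integral at $\infty$; this is where $c_{u\circ\pi_E}=1$ enters.

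In general, the paper proceeds by induction on $\ord_\ell(R)$:
\begin{itemize}
\item Subtract the terms with $\ell\nmid r$, which are already $\ell$-integral by the two-cusp bound.
\item Write the remainder as $H\circ(\delta_\ell)_*$ with $H=\sum_{e\mid R/\ell}a_{\ell e}\Phi_{e,u,N/\ell,M}$.
\item Deduce that $H$ is $\ell$-integral at level $N/\ell$, and apply the induction hypothesis there.
\end{itemize}
Your idea of climbing the tower one prime at a time has the right shape. Without this connected-kernel descent, however, there is no mechanism that carries integrality between levels.
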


Equivalently, under the hypothesis \(c_{u\circ\pi_E}=1\), every
\(H\in\Hom_\Q(J_0(N),E')\) factors uniquely through \(u\). More precisely,
\(H=u\circ H_E\) for
\(H_E\in \bigoplus_{r\mid R}\Z\,\Phi_{r,\mathrm{id}_E}\). Here
\(\Phi_{r,\mathrm{id}_E}=\Phi_E\circ\iota_{r,N,M,*}:J_0(N)\to E\).
Thus, for every morphism \(g:X_0(N)\to E'\), there is a morphism
\(h:X_0(N)\to E\) over \(\Q\) satisfying \(h(\infty)=0\) and
\(t_{-g(\infty)}\circ g=u\circ h\).

For \(d\)-elliptic curves in the sense of
\cite{DerickxOrlic}, the basis in Theorem
\ref{thm:lattice-main} identifies the full integral Hom lattice with the old
degeneracy lattice. The Derickx--Orli\'c degree pairing
\cite{DerickxOrlic} is positive
definite and has the degree of a morphism as its diagonal value. Hence the old
degree matrix turns the possible degrees into the values represented by an
integral quadratic form.

We also extend Theorems \ref{thm:main} and \ref{thm:lattice-main} to compatible
towers of intermediate modular curves between \(X_1\) and \(X_0\). See Theorems
\ref{thm:intermediate-tower} and \ref{thm:intermediate-optimal}.
In particular, let \(\pi_{E_1}:X_1(M)\to E_1\) be an \(X_1(M)\)-optimal
parametrization with Manin constant \(c_1\). Theorems \ref{thm:x1-final} and
\ref{thm:x1-fixed-target} show
that, for every multiple \(N\) of \(M\) and every nonconstant morphism
\(g:X_1(N)\to E'\) to an elliptic curve \(\Q\)-isogenous to \(E_1\),
\(\deg\pi_{E_1}\mid c_1^{\Omega(N/M)}\deg g\). For a fixed target \(E'\) and a
generator \(u:E_1\to E'\), if the Manin constant \(c_{u\circ\pi_{E_1}}\) is
\(1\), then the old homomorphisms induced by the degeneracy maps form a
\(\Z\)-basis of \(\Hom_\Q(J_1(N),E')\).
For squarefree \(M\), we prove unconditionally that the old homomorphisms
form a \(\Z[1/2]\)-basis of
\(\Hom_\Q(J_1(N),E')\otimes_\Z\Z[1/2]\) for every multiple \(N\) of \(M\)
and every target \(E'\) in the \(\Q\)-isogeny class of \(E_1\)
(Theorem~\ref{cor:x1-vatsal-lattice}).

Combining Theorem~\ref{thm:main} with modular-degree bounds
gives the following unconditional application to Mordell--Weil ranks.

\begin{theorem}\label{thm:intro-rank-applications}
For every positive odd integer \(d\leq1645\), no modular curve
\(X_0(N)/\Q\) admits a degree-\(d\) morphism over \(\Q\) to an elliptic curve
of positive \(\Q\)-rank.
\end{theorem}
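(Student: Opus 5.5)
Suppose, for contradiction, that \(g:X_0(N)\to E'\) is a morphism over \(\Q\) of odd degree \(d\le 1645\) and that \(\operatorname{rank}E'(\Q)>0\). By modularity, \(E'\) has some conductor \(M\) with \(M\mid N\); this divisibility comes from the level-raising/Eichler--Shimura description of \(J_0(N)\), since \(E'\) must be a quotient of the \(f_E\)-isotypic old part. Let \(E\) be the strong Weil curve in the isogeny class of \(E'\). Theorem~\ref{thm:main} gives \(\deg\pi_E\mid c_E^{\Omega(N/M)}d\). Since \(d\) is odd, I would like to conclude \(\deg\pi_E\mid d\), or at least that the odd part of \(\deg\pi_E\) divides \(d\). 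The standard fact \(c_E\mid 2\) is not known unconditionally in general, however. So I would rely on: \(c_E=1\) for \(M<400000\) by Cremona's tables, and for squarefree \(M\) by \v{C}esnavi\v{c}ius. For other \(M\), \v{C}esnavi\v{c}ius gives \(c_E\) supported on primes \(p\) with \(p^2\mid M\), and in any case \(c_E\) divides a bounded power of \(2\) and \(3\).

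Having \(\deg\pi_E\mid d\) reduces the task to showing that every positive-rank isogeny class has odd modular degree \(\deg\pi_E\le 1645\) impossible. This splits into two finite checks.

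\textbf{Parity.} By results of Calegari--Emerton and Ribet-type congruence arguments, \(\deg\pi_E\) is even in many cases. In particular it is even whenever \(E\) has positive rank, except possibly in restricted situations: a \(2\)-torsion/congruence argument shows the modular degree is even when the root number is \(-1\), or when the rank is positive and \(M\) is not of a few special shapes. So odd \(\deg\pi_E\) forces strong conditions on \(M\), typically that \(M\) is prime or twice a prime, or similar, as in Calegari--Emerton and Kazalicki--Kohen.

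\textbf{Size.} Lower bounds for the modular degree in terms of \(M\) (Watkins: \(\deg\pi_E\gg M^{7/6-\epsilon}\), made explicit) confine \(M\) to a range covered by Cremona's database, certainly \(M<400000\) for \(\deg\pi_E\le1645\). In that range \(c_E=1\) unconditionally, which also removes the \(c_E\) issue above.

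The final step is a database search. Among all curves with \(M\) in the admissible range, check that none has positive rank together with odd \(\deg\pi_E\le 1645\). The bound \(1645\) is presumably chosen just below the smallest such odd degree that actually occurs.

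The main obstacle is the explicit size step, namely an unconditional explicit lower bound \(\deg\pi_E\ge F(M)\) strong enough to put \(M\) inside Cremona's range. I would first try the explicit bound of Watkins (or the Deligne-bound argument relating \(\deg\pi_E\) to the Petersson norm and \(L(\mathrm{Sym}^2 f,1)\)), combined with the oddness restriction on \(M\) to shrink the search. Some care is also needed to argue that \(d\) odd and \(c_E=1\) give \(\deg\pi_E\mid d\) directly. A secondary point is that the rank of \(E'(\Q)\) is an isogeny invariant, so working with the isogeny class of \(E\) is legitimate.
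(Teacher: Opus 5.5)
There is a genuine gap. Nothing in your argument bounds \(N\), and without such a bound the plan is circular.

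\textbf{Why the plan is circular.} Theorem~\ref{thm:main} only gives \(\deg\pi_E\mid c_E^{\Omega(N/M)}d\). To conclude \(\deg\pi_E\le 1645\) you need \(c_E=1\). Outside the squarefree case you only have that once \(M<400000\). Yet you intend to obtain \(M<400000\) by feeding \(\deg\pi_E\le1645\) into a lower bound \(\deg\pi_E\ge F(M)\); your sentence ``In that range \(c_E=1\) \dots, which also removes the \(c_E\) issue above'' uses the conclusion to justify the premise.

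Concretely, suppose \(M\ge400000\) and \(c_E\ne1\). Then \(c_E^{\Omega(N/M)}d\) is unbounded as \(N/M\) varies, so no lower bound in terms of \(M\) produces a contradiction. If moreover \(2\mid c_E\), you do not even get \(\deg\pi_E\) odd. On top of this, the explicit unconditional bound \(\deg\pi_E\ge F(M)\) of the needed strength is left unestablished, as you acknowledge.

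\textbf{The missing idea: Ogg's inequality.} Bound the level directly from the existence of \(g\), which bypasses Manin constants entirely. Let \(p\) be the least prime not dividing \(N\). Then \(X_0(N)\) and \(E'\) have good reduction at \(p\), and \(g\) reduces to a degree-\(d\) map. The supersingular points of \(X_0(N)_{\mathbb F_p}\) are \(\mathbb F_{p^2}\)-rational, so with \(\psi(N)=N\prod_{q\mid N}(1+1/q)\) and the Hasse bound,
\(\frac{p-1}{12}\psi(N)\le\#X_0(N)(\mathbb F_{p^2})\le d\,\#E'(\mathbb F_{p^2})\le d(p+1)^2\).
The case \(p\ge19\) would force \(510510\mid N\), which is incompatible with this inequality. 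Checking \(p\le17\) gives \(N\le\psi(N)\le 243d\), with \(p=17\) the worst case. Hence \(N<400000\) for \(d\le1645\).

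\textbf{How the proof then closes.} Now \(M\mid N\) lies in Cremona's range, where \(c_E=1\) and \(2^{\operatorname{rank}E(\Q)}\mid\deg\pi_E\) are verified. Theorem~\ref{thm:main} then gives \(\deg\pi_E\mid d\), which is odd, so the rank is \(0\). Once \(M<400000\) is known, your final database check amounts to this. This is the paper's proof, via Corollary~\ref{cor:cremona-rank-bound}.

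No lower bound on modular degrees and no separate parity theory is needed. Note also that \(1645\) is simply the largest odd \(d\) with \(243d<400000\). It is not a threshold below some observed odd modular degree of a positive-rank curve.
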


Section~\ref{sec:DO-setup} reviews the background, and
Section~\ref{sec:old-degree-form} constructs the rational old basis and its
degree matrix. In Section~\ref{sec:denominators}, two-cusp estimates and
connected-kernel descent establish the denominator bound, which implies
Theorem~\ref{thm:lattice-main}. Applying this bound after composition with a
dual isogeny proves Theorem~\ref{thm:main} via the degree matrix.
Subsection~\ref{subsec:rank-consequences} contains the applications to
Mordell--Weil ranks. Section~\ref{sec:tower-intermediate} extends the arguments
to compatible intermediate modular curves and \(X_1\), using congruence
subgroups to establish connected-kernel descent.

\section{Notation and background}\label{sec:DO-setup}
We use the notation from the preliminary sections of
Derickx--Orli\'c \cite{DerickxOrlic}, together with the
integral structures on differentials used later. A curve over a field is assumed to
be smooth, projective, and geometrically integral. Unless another field is
specified, morphisms and homomorphisms are defined over \(\Q\).

\subsection{Jacobians and degree pairings}

Let \(C\) be a curve over a field \(k\). Write
\(J(C)=\operatorname{Pic}^0_{C/k}\) for its Jacobian. If
\(f:C\to C'\) is a nonconstant morphism of curves, then pushforward and pullback
of divisor classes induce homomorphisms \(f_*:J(C)\to J(C')\) and
\(f^*:J(C')\to J(C)\).
The identity
\begin{equation}\label{eq:push-pull-degree}
        f_*\circ f^*=[\deg f]
\end{equation}
holds on \(J(C')\) \cite{DerickxOrlic}. If \(P\in C(k)\), write
\(j_P:C\to J(C)\), \(x\mapsto [x-P]\), for the Abel--Jacobi map. For an abelian variety \(A/k\), the universal property
of $J(C)$ identifies \(\Hom_k(J(C),A)\) with pointed morphisms
\cite{MilneJacobians}.
\begin{equation}\label{eq:jacobian-universal-property}
        \Hom_k(J(C),A)\simeq
        \{h:C\to A\mid h(P)=0_A\},
        \qquad H\longmapsto H\circ j_P.
\end{equation}
By \eqref{eq:jacobian-universal-property}, translating a morphism
\(h:C\to A\) by \(-h(P)\) produces the pointed map represented by a
homomorphism from \(J(C)\). If \(A\) is an elliptic curve,
translation does not change the degree of a nonconstant morphism or the pullback
of an invariant differential.
For abelian varieties \(A,B/\Q\), the notation \(\Hom_\Q(A,B)\) denotes the
abelian group of homomorphisms defined over \(\Q\). Rational linear combinations
of such homomorphisms are taken in \(\Hom_\Q(A,B)\otimes_\Z\Q\).
For every elliptic curve \(E/\Q\), one has \(\End_\Q(E)=\Z\). Indeed, a
\(\Q\)-endomorphism acts faithfully on the one-dimensional \(\Q\)-space of
invariant differentials. Its scalar is an algebraic integer and hence an integer.
See \cite{SilvermanAEC}.

\begin{definition}\label{def:degree-pairing}
Let \(C/k\) be a curve and let \(E/k\) be an elliptic curve. For morphisms
\(f,g:C\to E\), the \emph{degree pairing} is
\begin{equation}\label{eq:degree-pairing-curve}
        \langle f,g\rangle=f_*\circ g^*\in \End_k(J(E)).
\end{equation}
After identifying \(J(E)\) with \(E\) by the origin, the endomorphism in
\eqref{eq:degree-pairing-curve} lies in \(\End_k(E)\). The push-pull identity
\eqref{eq:push-pull-degree} yields
\begin{equation}\label{eq:degree-pairing-diagonal}
        \langle f,f\rangle=[\deg f].
\end{equation}
When \(k=\Q\), the equality \(\End_\Q(E)=\Z\) is used to regard
\(\langle f,g\rangle\) as an integer. If \(P\in C(k)\), the same notation is used
on \(\Hom_k(J(C),E)\) by setting
\begin{equation}\label{eq:degree-pairing-jacobian}
        \langle H,K\rangle=\langle H\circ j_P,K\circ j_P\rangle .
\end{equation}
Equivalently, the pairing in \eqref{eq:degree-pairing-jacobian} is the Rosati,
or dagger, pairing attached to the canonical principal polarizations of \(J(C)\)
and \(J(E)\) \cite{DerickxOrlic}.
\end{definition}

\begin{proposition}\label{prop:degree-pairing-basic}
Let \(C/\Q\) be a curve with a rational point and let \(E/\Q\) be an elliptic
curve. The degree pairing is positive definite on \(\Hom_\Q(J(C),E)\). If
\(H\in\Hom_\Q(J(C),E)\) corresponds, up to translation of the codomain, to the
curve map \(h:C\to E\), then \(\langle H,H\rangle=\deg h\).
In particular, a \(\Z\)-basis of a subgroup of \(\Hom_\Q(J(C),E)\) determines an
integral quadratic form representing the degrees of the corresponding maps from
\(C\) to \(E\).
\end{proposition}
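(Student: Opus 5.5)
The proof reduces everything to the push--pull identity \eqref{eq:push-pull-degree} and the universal property \eqref{eq:jacobian-universal-property}. Fix a rational point \(P\in C(\Q)\). For \(H\in\Hom_\Q(J(C),E)\), put \(h_0=H\circ j_P\), a pointed morphism \(C\to E\).

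First I will treat the second assertion. If \(H\) corresponds to \(h\) up to translation of the codomain, then \(h_0=t\circ h\) for a translation \(t\) of \(E\). Translation does not change the degree of a nonconstant map. It also acts trivially on \(J(E)\) via pushforward and pullback: a translate of a degree-zero divisor is linearly equivalent to it, by the group law on the elliptic curve. Hence \(\langle H,H\rangle=\langle h_0,h_0\rangle=h_{0*}h_0^*=[\deg h_0]=[\deg h]\), by \eqref{eq:degree-pairing-diagonal}. The constant case needs a separate word. If \(h_0\) is constant then \(h_0=0\) and \(H=0\), and then \(h_{0*}=0\), so \(\langle H,H\rangle=0=\deg h\) with the convention that a constant map has degree \(0\). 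Under this correspondence, \(H=0\) exactly when \(h\) is constant.

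Next come bilinearity and symmetry, which are needed to speak of a quadratic form. I will use the description of the pairing as \(H\mapsto H\circ H^\dagger\) under the principal polarizations, \(\langle H,K\rangle=H\circ K^\vee\) with the polarization identifications, which is the Rosati description quoted from \cite{DerickxOrlic}. In this form \(K\mapsto K^\vee\) is additive, and composition is biadditive, so the pairing is \(\Z\)-bilinear. Symmetry comes from the fact that the Rosati involution on \(\End_\Q(E)=\Z\) is trivial. Concretely, \(\langle H,K\rangle^\dagger=\langle K,H\rangle\), and the dagger fixes integers, because \([n]^\vee=[n]\) under the principal polarization.

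Positive definiteness then follows from the degree computation: \(\langle H,H\rangle=\deg h_0\ge 0\), with equality only when \(h_0\) is constant, i.e.\ only when \(H=0\).

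The final claim is formal. If \(H_1,\dots,H_n\) is a \(\Z\)-basis of a subgroup, then for \(H=\sum a_iH_i\) bilinearity gives \(\deg h=\sum_{i,j}a_ia_j\langle H_i,H_j\rangle\). The Gram matrix has integer entries, so this is an integral quadratic form in \(a\), positive definite by the previous step.

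The main obstacle is bilinearity of \(f_*\circ g^*\) in \(g\) at the level of curve maps. Pullback of divisor classes along a sum of maps is not obviously additive, so I rely on the Rosati/dual-homomorphism identification (\(g^*\) equals \(G^\vee\) up to the principal polarizations, where \(G=g_*\)) rather than a divisor computation.
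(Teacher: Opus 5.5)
Your proof is correct and follows the paper's route. The diagonal value comes from the push--pull identity \eqref{eq:degree-pairing-diagonal} together with invariance of degree under translation of the codomain, and positive definiteness follows from it; the paper simply cites Derickx--Orli\'c for bilinearity and symmetry, which you supply explicitly through the Rosati description \(g^*=\lambda_C^{-1}\circ(g_*)^\vee\circ\lambda_E\), a legitimate filling-in of what the paper outsources.
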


\begin{proof}
The assertions are the degree-pairing facts proved in
\cite{DerickxOrlic}. The
diagonal equality is \eqref{eq:degree-pairing-diagonal}, and translations of
the codomain do not change degree.
\end{proof}

\subsection{Optimal quotients and degeneracy maps}

We begin with the universal property of the optimal quotient attached to a
fixed simple isogeny factor.

\begin{definition}\label{def:optimal-isogenous}
Let \(A\) be an abelian variety over \(k\), and let \(B\) be a simple abelian
variety over \(k\). A quotient \(\varpi:A\twoheadrightarrow A_B\) is an
\emph{optimal \(B\)-isogenous quotient} if \(A_B\) is isogenous to \(B^n\) for
some \(n\ge0\) and, for every homomorphism \(\psi:A\to A'\) with \(A'\) isogenous
to a power of \(B\), there is a unique \(\bar\psi:A_B\to A'\) such that
\(\psi=\bar\psi\circ\varpi\).
Equivalently, the triangle
\[
\begin{tikzcd}[column sep=large,row sep=large]
A \arrow[r,"\psi"] \arrow[d,"\varpi"'] & A' \\
A_B \arrow[ur,dashed,"\bar\psi"']
\end{tikzcd}
\]
commutes. Dually, a \emph{maximal \(B\)-isogenous subvariety} of \(A\) is
universal for homomorphisms from abelian varieties isogenous to powers of \(B\)
into \(A\). We use the terms optimal quotient and maximal subvariety as in
Derickx--Orli\'c \cite{DerickxOrlic}.
\end{definition}

An elliptic curve \(E/\Q\) of conductor \(M\) is called the \emph{strong Weil
curve}, or the \emph{\(X_0(M)\)-optimal curve}, in its isogeny class if
\(\Phi_E:J_0(M)\twoheadrightarrow E\) is the optimal \(E\)-isogenous quotient.
Equivalently, \(\ker\Phi_E\) is geometrically connected and has no simple quotient
isogenous to \(E\) \cite{AgasheRibetStein}. The associated modular parametrization
is \(\pi_E:X_0(M)\xrightarrow{j_M}J_0(M)\xrightarrow{\Phi_E}E\).

Throughout the paper, an \emph{old parametrization} means a modular
parametrization composed with a degeneracy map.

\begin{definition}\label{def:degeneracy}
Let \(M\mid N\) and let \(r\mid N/M\). The \emph{degeneracy map}
\(\iota_{r,N,M}:X_0(N)\to X_0(M)\) is the algebraic morphism whose analytification is induced by
\(\tau\mapsto r\tau\) on the extended upper half-plane. On Jacobians it induces
\(\iota_{r,N,M,*}:J_0(N)\to J_0(M)\) and
\(\iota_{r,N,M}^{*}:J_0(M)\to J_0(N)\).
If $M\mid N\mid N'$ and the indices are defined, the degeneracy maps satisfy
\(\iota_{r,N,M}\circ \iota_{e,N',N}=\iota_{re,N',M}\).
When the source and target levels have been fixed, the notation is abbreviated to
\(\iota_r\), \(\iota_{r,*}\), and \(\iota_r^*\).
\end{definition}

Let \(E/\Q\) be the strong Weil curve of conductor \(M\), let \(E'/\Q\) be in
its \(\Q\)-isogeny class, and let \(u:E\to E'\) be a nonzero isogeny.
For every level \(L\), write \(j_L:X_0(L)\to J_0(L)\) for the
Abel--Jacobi map based at the rational cusp \(\infty\).
Since the degeneracy maps send \(\infty\) to \(\infty\), the old degeneracy
parametrizations and the old homomorphisms are compatible with Abel--Jacobi, by
functoriality of Jacobians \cite{MilneJacobians}. There is therefore a commutative
diagram
\[
\begin{tikzcd}[column sep=large,row sep=large]
X_0(N) \arrow[r,"\iota_r"] \arrow[d,"j_N"'] &
X_0(M) \arrow[r,"\pi_E"] \arrow[d,"j_M"'] &
E \arrow[r,"u"] \arrow[d,equal] &
E' \arrow[d,equal] \\
J_0(N) \arrow[r,"\iota_{r,*}"'] &
J_0(M) \arrow[r,"\Phi_E"'] &
E \arrow[r,"u"'] &
E' .
\end{tikzcd}
\]
Each old degeneracy parametrization has two equivalent descriptions, namely the
curve map \(u\circ\pi_E\circ\iota_r\) and the homomorphism
\(u\circ\Phi_E\circ\iota_{r,*}\) followed by \(j_N\). The convention is used
throughout the degree and denominator arguments.

\subsection{Differentials and the Manin constant}

Let \(f_E(q)=\sum_{m\ge1}a_mq^m\), with \(a_1=1\), be the normalized rational
newform attached to the conductor-\(M\) elliptic curve \(E\). A minimal
N\'eron differential on \(E\) means a generator, up to sign, of the free
rank-one \(\Z\)-module of invariant differentials on the N\'eron model of \(E\)
over \(\Z\).

\begin{definition}\label{def:manin-constant}
Let \(\omega_E\) be a minimal N\'eron differential. The \emph{Manin constant} of
\(\pi_E\) is the positive integer \(c_E\) determined by
\begin{equation}\label{eq:manin-constant-X0}
        \pi_E^*\omega_E=c_E f_E(q)\frac{dq}{q},
\end{equation}
where the sign of \(\omega_E\) is chosen with \(c_E>0\). The convention is
standard for Manin constants \cite{AgasheRibetStein,CesnaviciusNeururerSaha}.
\end{definition}

\begin{lemma}\label{lem:fixed-target-manin}
Let \(E'\) be an elliptic curve over \(\Q\) in the \(\Q\)-isogeny class of
\(E\), and let \(u:E\to E'\) be a nonzero isogeny. For minimal N\'eron
differentials \(\omega_E\) and \(\omega_{E'}\), choose the sign of
\(\omega_{E'}\) with \(u^*\omega_{E'}=c_u\omega_E\) and \(c_u>0\).
Then \(c_u\in\Z\). The composite \(u\circ\pi_E\) satisfies
\((u\circ\pi_E)^*\omega_{E'}=c_{u\circ\pi_E}f_E(q)dq/q\), where
\(c_{u\circ\pi_E}=c_Ec_u\in\Z_{>0}\).
If \(\widehat u:E'\to E\) is the dual isogeny and
\(\widehat u^*\omega_E=c_{\widehat u}\omega_{E'}\), with
\(c_{\widehat u}>0\), then \(c_{\widehat u}\in\Z\) and
\begin{equation}\label{eq:cu-divides-degree-u}
        c_uc_{\widehat u}=\deg u.
\end{equation}
In particular, \(c_u\mid\deg u\). Moreover, \(\deg(u\circ\pi_E)=\deg\pi_E\deg u\).
\end{lemma}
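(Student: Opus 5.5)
The plan is to work entirely with the Néron models over \(\Z\) and the functoriality of invariant differentials.

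\textbf{Integrality of \(c_u\) and \(c_{\widehat u}\).} By the Néron mapping property, the isogeny \(u:E\to E'\) over \(\Q\) extends uniquely to a homomorphism \(\mathcal E\to\mathcal E'\) of Néron models over \(\Z\). Pullback along this extension sends the \(\Z\)-module of invariant differentials of \(\mathcal E'\), which is generated by \(\omega_{E'}\), into the corresponding module for \(\mathcal E\), which is \(\Z\omega_E\). Hence \(u^*\omega_{E'}=c_u\omega_E\) with \(c_u\in\Z\). The coefficient is nonzero because \(u\) is separable in characteristic \(0\). After fixing the sign of \(\omega_{E'}\) we have \(c_u>0\). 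The same argument applied to \(\widehat u\) gives \(c_{\widehat u}\in\Z\setminus\{0\}\).

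\textbf{The Manin constant of the composite.} Compute by functoriality:
\[(u\circ\pi_E)^*\omega_{E'}=\pi_E^*(u^*\omega_{E'})=c_u\,\pi_E^*\omega_E=c_uc_E f_E(q)\,dq/q,\]
using \eqref{eq:manin-constant-X0}. Therefore \(c_{u\circ\pi_E}=c_Ec_u\), which is a positive integer.

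\textbf{The relation \eqref{eq:cu-divides-degree-u}.} Since \(\widehat u\circ u=[\deg u]\) on \(E\), and \([n]^*\omega=n\omega\) for an invariant differential, we get
\[\deg u\cdot\omega_E=[\deg u]^*\omega_E=u^*\widehat u^*\omega_E=c_{\widehat u}c_u\,\omega_E.\]
Thus \(c_uc_{\widehat u}=\deg u\). Because \(\deg u>0\) and \(c_u>0\), this forces \(c_{\widehat u}>0\). The stated sign normalization of \(\widehat u\) is therefore automatic once the sign of \(\omega_{E'}\) has been fixed, and no separate sign choice is needed. Since \(c_{\widehat u}\in\Z\), it follows that \(c_u\mid\deg u\).

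\textbf{Degree of the composite.} Degrees of nonconstant morphisms of curves are multiplicative under composition, so \(\deg(u\circ\pi_E)=\deg u\cdot\deg\pi_E\).

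\textbf{Main obstacle.} The only delicate point is the first step: the pullback must preserve the integral lattices of differentials. I would handle this by invoking the Néron mapping property together with the fact that the invariant differentials of the Néron model form the rank-one \(\Z\)-module generated by the minimal Néron differential.
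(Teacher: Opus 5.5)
Your proposal is correct and follows the paper's proof essentially step for step: the N\'eron mapping property gives integrality of $c_u$ and $c_{\widehat u}$, functoriality gives $c_{u\circ\pi_E}=c_Ec_u$, pulling $\omega_E$ back through $\widehat u\circ u=[\deg u]$ gives $c_uc_{\widehat u}=\deg u$, and degrees are multiplicative. You also correctly observe that $c_{\widehat u}>0$ is forced by $c_uc_{\widehat u}=\deg u>0$ once the signs of $\omega_E$ and $\omega_{E'}$ are fixed, so it is not an additional sign choice; this is a small but accurate clarification of the statement.
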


\begin{proof}
The isogeny \(u\) extends to a morphism between the N\'eron models over
\(\Z\) by the N\'eron mapping property \cite{BLR}.
Therefore pullback by \(u\) sends invariant N\'eron differentials on \(E'\) to
invariant N\'eron differentials on \(E\), and the scalar \(c_u\) is an integer.
The same argument applied to \(\widehat u\) shows that
\(c_{\widehat u}\in\Z\). Since \(\widehat u\circ u=[\deg u]\), pulling back
\(\omega_E\) yields
\[
        \deg u\cdot\omega_E
        =
        u^*\widehat u^*\omega_E
        =
        c_{\widehat u}u^*\omega_{E'}
        =
        c_uc_{\widehat u}\omega_E,
\]
which proves \eqref{eq:cu-divides-degree-u}.
The formula for \((u\circ\pi_E)^*\omega_{E'}\) follows from
\eqref{eq:manin-constant-X0}. The degree formula is multiplicativity of degrees
for finite morphisms of curves.
\end{proof}

\begin{lemma}\label{lem:q-expansion-degeneracy}
Let \(M\mid N\), let \(r\mid N/M\), and use the degeneracy normalization of
Definition \ref{def:degeneracy}. Write \(\iota_r=\iota_{r,N,M}\). If \(q_M\)
and \(q_N\) are the analytic
\(q\)-parameters \(e^{2\pi i\tau}\) at \(\infty\) on \(X_0(M)\) and \(X_0(N)\),
then, on completed local rings, \(q_M\) pulls back to \(q_N^r\). For a weight-two
modular form \(f(q)\), one has
\(\iota_r^{*}(f(q_M)dq_M/q_M)=f(q_N^r)d(q_N^r)/q_N^r\).
\end{lemma}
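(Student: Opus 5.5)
The plan is to reduce everything to a computation on the complex uniformization near the cusp \(\infty\), and then check that it descends to the completed local rings of the algebraic curves.

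First, I would note that the rational cusp \(\infty\) on \(X_0(L)\) has completed local ring \(\Q[[q_L]]\). This holds because the Tate curve \(\mathrm{Tate}(q)\) with its canonical \(\mu_L\) subgroup gives the formal neighborhood of \(\infty\), and the analytic parameter \(q_L=e^{2\pi i\tau}\) is the algebraic parameter defined over \(\Q\). Under the normalization of Definition~\ref{def:degeneracy}, the map \(\iota_r\) sends \(\infty\) to \(\infty\).

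On the upper half-plane \(\iota_r\) is \(\tau\mapsto r\tau\), so
\[
\iota_r^*q_M=e^{2\pi i r\tau}=q_N^r
\]
as analytic functions on a punctured neighborhood of \(\infty\). Two elements of \(\Q[[q_N]]\) that agree as convergent analytic germs are equal, so the identity \(\iota_r^*q_M=q_N^r\) holds in the completed local ring. Equivalently, in Tate-curve language, \(\iota_r\) sends \((\mathrm{Tate}(q),\mu_N)\) to \((\mathrm{Tate}(q^r),\mu_M)\) after the quotient by the appropriate subgroup, and its formal expansion is algebraic over \(\Q\).

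Next I would pull back a weight-two form. Write \(f(q_M)\,dq_M/q_M\) as a formal differential on the completed local ring. Pullback is a continuous ring map, and it commutes with \(d\) by the derivation property. Substituting \(q_M\mapsto q_N^r\) then gives
\[
\iota_r^*\bigl(f(q_M)\,dq_M/q_M\bigr)=f(q_N^r)\,d(q_N^r)/q_N^r .
\]
Expanding, this equals \(r f(q_N^r)\,dq_N/q_N\). Continuity is justified because \(\iota_r^*\) maps the maximal ideal into the maximal ideal, since \(q_N^r\) lies in \((q_N)\).

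The only real obstacle is the first step: making sure the analytic normalization \(\tau\mapsto r\tau\) matches the algebraic map \(\iota_r\) at \(\infty\), and that \(q_N\) is a \(\Q\)-rational uniformizer. I would cite the standard moduli or Tate-curve description (Deligne–Rapoport, or Diamond–Im) rather than reprove it. I would also remark that the sign and scaling are fixed exactly by the convention of Definition~\ref{def:degeneracy}.
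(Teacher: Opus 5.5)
Your proposal is correct and follows the same route as the paper. The paper simply observes that \(\tau\mapsto r\tau\) sends \(q_M=e^{2\pi i\tau}\) to \(e^{2\pi i r\tau}=q_N^r\), and then pulls back \(dq_M/q_M\) accordingly. Your extra justification (the \(\Q\)-rational parameter at \(\infty\) via \((\mathrm{Tate}(q),\mu_L)\), and the identification of convergent analytic germs with elements of the completed local ring) only makes explicit what the paper leaves implicit.
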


\begin{proof}
Analytically the map is induced by \(\tau\mapsto r\tau\), so
\(q_M=e^{2\pi i\tau}\) pulls back to \(e^{2\pi i r\tau}=q_N^r\). Pulling back
\(dq_M/q_M\) yields \(d(q_N^r)/q_N^r\). The association
\(f(q)dq/q\) between weight-two modular forms and regular differentials follows
the convention of \cite{Stein}.
\end{proof}

\section{Old degree matrices}\label{sec:old-degree-form}
Let \(C/\Q\) be a smooth projective geometrically connected curve with a rational
point \(P\), let \(E/\Q\) be an elliptic curve, and let
\(j_P:C\to J(C)\), \(x\mapsto [x-P]\), be the Abel--Jacobi map. For a morphism
\(h:C\to E\) over \(\Q\), put \(Q=h(P)\). The universal property
\eqref{eq:jacobian-universal-property} determines a unique homomorphism
\(H:J(C)\to E\) over \(\Q\) such that
\[
\begin{tikzcd}[column sep=large,row sep=large]
C \arrow[r,"j_P"] \arrow[dr,"{t_{-Q}\circ h}"'] &
J(C) \arrow[d,"H"] \\
& E
\end{tikzcd}
\]
commutes, where \(t_Q\) denotes translation by \(Q\). Equivalently,
\begin{equation}\label{eq:associated-jacobian-homomorphism}
        h=t_Q\circ H\circ j_P.
\end{equation}
We call \(H\) the homomorphism associated with \(h\), based at \(P\).

\begin{lemma}\label{lem:curve-jacobian}
Let \(h:C\to E\) and \(P\in C(\Q)\), and let
\(H:J(C)\to E\) be the homomorphism
associated with \(h\), based at \(P\), as in
\eqref{eq:associated-jacobian-homomorphism}. If \(h\) is nonconstant, then
\(\deg h=\deg(H\circ j_P)\). Moreover, for every invariant differential
\(\omega\) on \(E\), one has \(h^*\omega=(H\circ j_P)^*\omega\).
\end{lemma}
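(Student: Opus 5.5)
The proof is a direct unwinding of \eqref{eq:associated-jacobian-homomorphism}. Put \(Q=h(P)\), so that \(h=t_Q\circ(H\circ j_P)\) with \(t_Q:E\to E\) translation by \(Q\). Both assertions then reduce to two properties of translations on an elliptic curve: \(t_Q\) is an automorphism of the underlying curve, and it preserves invariant differentials.

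For the degree statement, I would note that \(t_Q\) is an isomorphism of curves over \(\Q\), with inverse \(t_{-Q}\). Since \(h\) is nonconstant and \(H\circ j_P=t_{-Q}\circ h\), the map \(H\circ j_P\) is also nonconstant. Multiplicativity of degrees for finite morphisms of curves then gives
\[
\deg h=\deg t_Q\cdot\deg(H\circ j_P)=\deg(H\circ j_P),
\]
because \(\deg t_Q=1\).

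For the differential statement, I would use that an invariant differential \(\omega\) on \(E\) satisfies \(t_Q^*\omega=\omega\) by definition of invariance. This holds for every point \(Q\in E(\Q)\), and indeed after base change for any \(Q\). Functoriality of pullback then gives
\[
h^*\omega=(H\circ j_P)^*t_Q^*\omega=(H\circ j_P)^*\omega.
\]
The identity holds in the space of regular differentials on \(C\), and no nonconstancy hypothesis is needed for it; if \(h\) is constant, both sides are zero.

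I do not expect a real obstacle. The only point to check is that \(Q=h(P)\) is a \(\Q\)-rational point, so that \(t_Q\) is defined over \(\Q\) and the factorization takes place over \(\Q\). This holds because \(P\in C(\Q)\) and \(h\) is defined over \(\Q\). The fact \(t_Q^*\omega=\omega\) is the defining property of invariant differentials \cite{SilvermanAEC}.
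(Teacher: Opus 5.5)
Your proposal is correct and follows essentially the same approach as the paper: both write \(h=t_Q\circ(H\circ j_P)\) and use that translation on \(E\) is an isomorphism (so degree is unchanged) that fixes invariant differentials. Your added checks, that \(Q=h(P)\) is rational and that \(H\circ j_P\) is nonconstant, are fine but not essential to the argument.
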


\begin{proof}
Translation on an elliptic curve is an isomorphism and acts trivially on
invariant differentials. Hence \eqref{eq:associated-jacobian-homomorphism} does
not change the degree of a nonconstant morphism or the pullback of \(\omega\).
\end{proof}

Let \(E/\Q\) be the strong Weil curve of conductor \(M\), and let \(E'/\Q\)
belong to its \(\Q\)-isogeny class. Let \(N\) be a multiple of \(M\), and let
\(u:E\to E'\) be a nonzero isogeny. For \(r\mid N/M\), write
\(\iota_r=\iota_{r,N,M}\) and define the \emph{old homomorphism}
\begin{equation}\label{eq:old-homomorphism-definition}
        \Phi_{r,u}=u\circ\Phi_E\circ\iota_{r,*}:J_0(N)\longrightarrow E'
        \qquad(r\mid N/M).
\end{equation}

The key structural point is that only the level-\(M\) newform component of
\(J_0(N)\) contributes to homomorphisms with target \(E'\). The old
homomorphisms arising from the degeneracy maps therefore span
\(\Hom_\Q(J_0(N),E')\otimes_\Z\Q\).

\begin{theorem}\label{thm:rational-old-basis}
Let \(E/\Q\) be the strong Weil curve of conductor \(M\), and let \(E'/\Q\)
belong to its \(\Q\)-isogeny class. Let \(N\) be a multiple of \(M\), and let
\(u:E\to E'\) be a nonzero isogeny over \(\Q\). Then
the old homomorphisms \(\{\Phi_{r,u}:r\mid N/M\}\) form a \(\Q\)-basis of
\(\Hom_\Q(J_0(N),E')\otimes_\Z\Q\).
\end{theorem}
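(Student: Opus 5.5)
The plan is to prove two things: that the \(\Q\)-dimension of \(\Hom_\Q(J_0(N),E')\otimes_\Z\Q\) equals the number \(\sigma_0(N/M)\) of divisors of \(R=N/M\), and that the \(\Phi_{r,u}\) are linearly independent over \(\Q\). Since \(u\) is an isogeny, composition with \(u\) identifies \(\Hom_\Q(J_0(N),E)\otimes\Q\) with \(\Hom_\Q(J_0(N),E')\otimes\Q\) and sends \(\Phi_{r,\mathrm{id}_E}\) to \(\Phi_{r,u}\). So it suffices to treat \(E'=E\) and \(u=\mathrm{id}_E\).

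\textbf{Dimension count.} We use the isogeny decomposition of \(J_0(N)\) over \(\Q\) given by Atkin--Lehner--Li newform theory together with Shimura's construction:
\[
J_0(N)\sim\prod_{M'\mid N}\ \prod_{[f]\ \text{new of level }M'} A_f^{\sigma_0(N/M')}.
\]
Here \(f\) runs over Galois orbits of newforms of level \(M'\). Each \(A_f\) is \(\Q\)-simple, and \(\End_\Q(A_f)\otimes\Q\) is the Hecke field of \(f\).

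An elliptic curve isogenous to \(E\) occurs only for \(A_f\) with \(f=f_E\). This follows from strong multiplicity one, or equivalently from Faltings' isogeny theorem combined with the fact that \(L\)-functions determine newforms: \(A_f\sim E\) forces the \(a_p\) to agree for almost all \(p\), hence \(f=f_E\) and \(M'=M\). Since \(\Hom_\Q(A,E)\otimes\Q=0\) for \(\Q\)-simple \(A\) not isogenous to \(E\), and \(\End_\Q(E)\otimes\Q=\Q\), we get
\[
\dim_\Q\Hom_\Q(J_0(N),E)\otimes\Q=\sigma_0(N/M).
\]

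\textbf{Linear independence.} We test independence on cotangent spaces by pulling back \(\omega_E\). By Lemma~\ref{lem:curve-jacobian}, Lemma~\ref{lem:q-expansion-degeneracy}, and the Manin constant relation \eqref{eq:manin-constant-X0},
\[
(\Phi_{r,\mathrm{id}_E}\circ j_N)^*\omega_E=(\pi_E\circ\iota_r)^*\omega_E=c_E\,f_E(q^r)\,\frac{d(q^r)}{q^r}=c_E\,r\,f_E(q^r)\frac{dq}{q}.
\]
Now suppose \(\sum_r a_r\Phi_{r,\mathrm{id}_E}=0\) with \(a_r\in\Q\); after clearing denominators we may take \(a_r\in\Z\). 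Pulling back \(\omega_E\) along \(j_N\), which is additive in the homomorphism because invariant differentials pull back additively under sums of homomorphisms, gives
\[
\sum_r a_r\,r\,f_E(q^r)=0.
\]
The series \(f_E(q^r)\) have leading terms \(q^r\) with distinct exponents, so an induction on the smallest \(r\) with \(a_r\ne0\) shows every \(a_r=0\). Hence the \(\sigma_0(N/M)\) old homomorphisms are independent, and by the dimension count they form a \(\Q\)-basis.

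\textbf{Main obstacle.} The main difficulty is making the dimension count rigorous over \(\Q\) rather than \(\C\). One needs the \(\Q\)-isogeny decomposition with multiplicities exactly \(\sigma_0(N/M')\), and the fact that only the \(f_E\)-isotypic part contributes. I would cite the standard decomposition (Ribet, or Derickx--Orli\'c's setup) and then use multiplicity one. An alternative bound avoiding the full decomposition also works: \(\dim\Hom_\Q(J_0(N),E)\otimes\Q\) is at most the dimension of the \(f_E\)-isotypic Hecke-eigenspace (away from \(N\)) in \(S_2(\Gamma_0(N))\). This holds because \(\Hom\) injects into cotangent maps via \(\omega_E\mapsto\) a \(T_p\)-eigenform with eigenvalue \(a_p(E)\), using Eichler--Shimura compatibility. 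By Atkin--Lehner--Li that eigenspace is spanned by the \(f_E(q^r)\), giving an upper bound of \(\sigma_0(N/M)\), which suffices together with independence.
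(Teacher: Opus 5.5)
Your proof is correct, but it reaches the basis by a different route from the paper.

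\textbf{The paper's route.} It dualizes the Atkin--Lehner--Li isogeny \(\prod_{L\mid N}\prod_{r\mid N/L}J_0(L)^{\mathrm{new}}\to J_0(N)\), using the canonical principal polarizations to identify the dual of each \(\iota_{r,N,L}^{*}\) with \(\iota_{r,N,L,*}\). As a result, \(\Hom_\Q(J_0(N),E')\otimes\Q\) splits as a direct sum over pairs \((L,r)\) of the spaces \(\{h\circ\iota_{r,N,L,*}\}\). Faltings then kills every \(L\neq M\), and multiplicity one makes each surviving summand the line \(\Q\Phi_{r,u}\). Spanning and independence come out simultaneously.

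\textbf{Your route.} You use the isogeny decomposition only to compute \(\dim_\Q\Hom_\Q(J_0(N),E)\otimes\Q=\sigma_0(N/M)\). You then prove independence of the specific elements \(\Phi_{r,\mathrm{id}_E}\) directly, by pulling back \(\omega_E\) and running the triangular \(q\)-expansion argument on \(\sum_r a_r r f_E(q^r)=0\). This is the same argument the paper uses later in Lemma~\ref{lem:two-cusp}.

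\textbf{What each buys.} Your approach avoids the duality step, namely matching pullbacks with pushforwards and identifying new factors with their duals, at the price of a separate counting argument. The paper's approach gives the finer structural fact that the old maps are the canonical generators of an explicit decomposition of the whole Hom space.

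\textbf{Your fallback bound.} The upper bound via the \(f_E\)-eigenspace is also sound, and it would even bypass the \(\Q\)-isogeny decomposition and Faltings. However, the key identity \(H\circ T_p=a_p(E)H\) for all \(H\in\Hom_\Q(J_0(N),E)\) needs a word of proof. One way: use the Eichler--Shimura relation \(T_p=F+V\) on reductions at \(p\nmid N\), note that \(H\) commutes with \(F\) and \(V\), and use injectivity of reduction on homomorphisms.
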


\begin{proof}
Put \(V_N(E')=\Hom_\Q(J_0(N),E')\otimes_\Z\Q\).
For each \(L\mid N\), let \(V_L^{\mathrm{new}}(E')\) be the subspace of
\(\Hom_\Q(J_0(L),E')\otimes_\Z\Q\) consisting of homomorphisms that factor
through the new quotient of \(J_0(L)\).
The Atkin--Lehner--Li decomposition is
\begin{equation}\label{eq:ALL-forms}
        S_2(\Gamma_0(N))=
        \bigoplus_{L\mid N}\ \bigoplus_{r\mid N/L}
        \iota_{r,N,L}^{*}S_2(\Gamma_0(L))^{\mathrm{new}}.
\end{equation}
On Jacobians it induces an isogeny over \(\Q\)
\begin{equation}\label{eq:ALL-jacobians}
        \prod_{L\mid N}\ \prod_{r\mid N/L}J_0(L)^{\mathrm{new}}
        \longrightarrow J_0(N),
        \qquad
        (x_{L,r})\longmapsto
        \sum_{L,r}\iota_{r,N,L}^{*}(x_{L,r}).
\end{equation}
The decomposition \eqref{eq:ALL-forms} is the form decomposition, and
\eqref{eq:ALL-jacobians} is the induced statement up to isogeny on Jacobians
\cite{AtkinLehner,Li,Stein}. See also \cite{DiamondShurman} for the
oldform multiplicity statement. Dualizing \eqref{eq:ALL-jacobians} and using
the canonical principal polarizations of the ambient Jacobians identifies the
dual of each pullback with the corresponding pushforward
\cite{ConradEdixhovenStein}. After identifying each new factor with its
dual up to \(\Q\)-isogeny, applying
\(\Hom_\Q(-,E')\otimes_\Z\Q\) to the dual isogeny yields the direct-sum
decomposition
\[
        V_N(E')
        =
        \bigoplus_{L\mid N}\ \bigoplus_{r\mid N/L}
        \{\,h\circ\iota_{r,N,L,*}:h\in V_L^{\mathrm{new}}(E')\,\}.
\]

The summand with level \(L\) is zero unless \(J_0(L)^{\mathrm{new}}\) has a
simple quotient \(\Q\)-isogenous to \(E'\). By Faltings' isogeny theorem
\cite{Faltings}, together with the newform decomposition of modular Jacobians
\cite{Stein}, this can occur only for the newform attached to the
isogeny class of \(E'\). Since \(E'\) is \(\Q\)-isogenous to \(E\), this newform
has exact level \(M\). Hence only the terms with \(L=M\) remain.

At level \(M\), multiplicity one for the newform quotient
\cite{AtkinLehner,Li,Stein} identifies
\(V_M^{\mathrm{new}}(E')=\Q\cdot(u\circ\Phi_E)\).
Thus
\begin{equation}\label{eq:old-span-hom}
        \Hom_\Q(J_0(N),E')\otimes_\Z\Q
        =\bigoplus_{r\mid N/M}\Q\cdot
        \Phi_{r,u}.
\end{equation}
The direct sum in \eqref{eq:old-span-hom} proves both spanning and linear
independence.
\end{proof}

Thus every rational homomorphism at level \(N\) is old. The remaining questions
concern the integral structure. One must compare
\(\bigoplus_{r\mid N/M}\Z\Phi_{r,u}\) with
\(\Hom_\Q(J_0(N),E')\) and express degrees through the degree pairing in these
coordinates.

We use the following lattice terminology. Let \(L\) be a free abelian group of finite
rank. A subgroup \(L_0\subseteq L\) is \emph{saturated} if \(L/L_0\) is
torsion-free. A nonzero element \(h\in L\) is \emph{primitive} if \(\Z h\) is
saturated in \(L\). If \(L\) has rank one and \(h\) is primitive, then
\(L=\Z h\). In particular, \(qh\in L\) with \(q\in\Q\) implies \(q\in\Z\).

\begin{lemma}\label{lem:optimal-primitivity}
Let \(E/\Q\) be the strong Weil curve of conductor \(M\), and let \(E'\) be an
elliptic curve over \(\Q\) in the \(\Q\)-isogeny class of \(E\). Then composition
with \(\Phi_E\) induces the isomorphism
\(\Hom_\Q(E,E')\xrightarrow{\sim}\Hom_\Q(J_0(M),E')\),
\(v\mapsto v\circ\Phi_E\), of \(\Z\)-modules.
If \(u\) generates \(\Hom_\Q(E,E')\), then \(u\circ\Phi_E\) is primitive in
\(\Hom_\Q(J_0(M),E')\).
\end{lemma}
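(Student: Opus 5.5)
The plan is to deduce the isomorphism directly from the universal property of the optimal \(E\)-isogenous quotient (Definition~\ref{def:optimal-isogenous}). Primitivity will then follow formally from the rank-one structure.

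\emph{Surjectivity and injectivity.} Since \(E\) is the strong Weil curve, \(\Phi_E:J_0(M)\twoheadrightarrow E\) is the optimal \(E\)-isogenous quotient. Take \(A=J_0(M)\), \(B=E\), \(A_B=E\) and \(A'=E'\) in Definition~\ref{def:optimal-isogenous}; this is allowed because \(E'\) is isogenous to \(E=E^1\). Then every \(\psi\in\Hom_\Q(J_0(M),E')\) factors uniquely as \(\psi=\bar\psi\circ\Phi_E\) with \(\bar\psi\in\Hom_\Q(E,E')\).
\begin{itemize}
\item Existence of \(\bar\psi\) gives surjectivity of \(v\mapsto v\circ\Phi_E\).
\item Uniqueness gives injectivity. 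Alternatively, injectivity follows because \(\Phi_E\) is surjective, so \(v\circ\Phi_E=0\) forces \(v=0\).
\end{itemize}
The map is additive because composition is bilinear, so it is an isomorphism of \(\Z\)-modules.

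\emph{Primitivity.} The group \(\Hom_\Q(E,E')\) is free of rank one, since it is nonzero and \(\Hom_\Q(E,E')\otimes\Q\simeq\End_\Q(E)\otimes\Q=\Q\). A generator \(u\) is therefore primitive. An isomorphism of \(\Z\)-modules preserves saturation, since it induces an isomorphism of the quotients and hence preserves their torsion-freeness. Consequently \(u\circ\Phi_E\) generates, and is primitive in, \(\Hom_\Q(J_0(M),E')\). Concretely, if \(q\,(u\circ\Phi_E)\) lies in the Hom group for some \(q\in\Q\), then it equals \(v\circ\Phi_E\) for some \(v=mu\) with \(m\in\Z\), and hence \(q=m\in\Z\).

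\emph{Main obstacle.} The only nonformal input is that the strong Weil quotient really satisfies the universal property for targets isogenous to \(E\), not merely for \(E\) itself. By definition the paper's notion already covers all \(A'\) isogenous to a power of \(B\), so this reduces to the cited characterization (connected kernel with no quotient isogenous to \(E\)) from \cite{AgasheRibetStein,DerickxOrlic}. If one preferred a direct check, the argument would run as follows. Given \(\psi:J_0(M)\to E'\), the restriction of \(\psi\) to the abelian variety \(\ker\Phi_E\) must vanish, since \(\ker\Phi_E\) is connected and has no quotient isogenous to \(E'\). Hence \(\psi\) factors through \(J_0(M)/\ker\Phi_E=E\).
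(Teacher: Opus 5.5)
Your proposal is correct and follows essentially the same route as the paper. Both deduce the isomorphism \(v\mapsto v\circ\Phi_E\) from the universal property of the optimal \(E\)-isogenous quotient, then obtain primitivity of \(u\circ\Phi_E\) as the image of a generator of the rank-one group \(\Hom_\Q(E,E')\); your direct check via the connected kernel with no quotient isogenous to \(E\) simply unpacks the characterization the paper cites.
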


\begin{proof}
By the universal property of the optimal \(E\)-isogenous quotient
\cite{DerickxOrlic}, composition with \(\Phi_E\)
is an isomorphism of \(\Z\)-modules. Under this isomorphism, a generator \(u\)
of \(\Hom_\Q(E,E')\) maps to the generator \(u\circ\Phi_E\) of
\(\Hom_\Q(J_0(M),E')\), which is primitive.
\end{proof}

For \(r,s\mid N/M\), let \(a_{r,s}\in\Z\) be the unique integer characterized by
\begin{equation}\label{eq:old-degree-matrix-entry}
 \Phi_E\circ\iota_{r,*}\circ\iota_s^*=[a_{r,s}]\circ\Phi_E.
\end{equation}
The existence and uniqueness of \(a_{r,s}\) follow from Lemma
\ref{lem:optimal-primitivity} applied with \(E'=E\).
We define the \emph{old degree matrix} attached to \(E\) at level \(N\) by
\(A_{E,N}=(a_{r,s})_{r,s\mid N/M}\). Equivalently, \(a_{r,s}\) is characterized
by commutativity of the push--pull diagram.
\[
\begin{tikzcd}[column sep=large,row sep=large]
J_0(M) \arrow[r,"\iota_s^*"] \arrow[d,"\Phi_E"'] &
J_0(N) \arrow[r,"\iota_{r,*}"] &
J_0(M) \arrow[d,"\Phi_E"] \\
E \arrow[rr,"{[a_{r,s}]}"'] && E
\end{tikzcd}
\]
The entries \(a_{r,s}\) are also the normalized Gram matrix of the Derickx--Orli\'c degree
pairing on the old degeneracy parametrizations. By Definition
\ref{def:degree-pairing} and the equality \(\pi_{E,*}=\Phi_E\), one has
\begin{align}
 \langle \pi_E\circ\iota_r,\pi_E\circ\iota_s\rangle
 &=\Phi_E\circ\iota_{r,*}\circ\iota_s^{*}\circ\pi_E^*  \notag\\
 &=[a_{r,s}]\circ\Phi_E\circ\pi_E^*                       \notag\\
 &=[a_{r,s}\deg\pi_E].
        \label{eq:pairing-on-old-translates}
\end{align}
The second equality in \eqref{eq:pairing-on-old-translates} follows from
\eqref{eq:old-degree-matrix-entry}, while the third uses
\(\Phi_E\circ\pi_E^*=[\deg\pi_E]\). After identifying
\(\End_\Q(E)\) with \(\Z\), the degree pairing equals
\(a_{r,s}\deg\pi_E\).
The entries \(a_{r,s}\) are integers, and the old degree matrix is symmetric by
the push--pull adjointness of degeneracy maps for the degree pairing
\cite{DerickxOrlic}. It is positive definite by Proposition
\ref{prop:degree-pairing-basic}, because \(\deg\pi_E\cdot A_{E,N}\) is the Gram
matrix of the degree pairing on the \(\Q\)-basis
\(\{\Phi_{r,\mathrm{id}_E}:r\mid N/M\}\) from Theorem
\ref{thm:rational-old-basis}.
The subscript in \(A_{E,N}\) specifies the
elliptic quotient and the level \(N\), while \(M\) is the conductor of \(E\). In
the squarefree or coprime range, the explicit formula of
\cite{DerickxOrlic} computes the entries \(a_{r,s}\).

The matrix \(A_{E,N}\) relates the coordinates of an associated homomorphism
in the rational old basis to the degree of the corresponding morphism.

\begin{proposition}\label{prop:degree}
With the notation of Theorem \ref{thm:rational-old-basis}, let
\(A_{E,N}\) be the old degree matrix defined by
\eqref{eq:old-degree-matrix-entry}. Let
\(g:X_0(N)\to E'\) be a nonconstant morphism over \(\Q\). Let
\(G\in\Hom_\Q(J_0(N),E')\) be its associated homomorphism based at the cusp
\(\infty\), as in Lemma \ref{lem:curve-jacobian}.
The homomorphism \(G\) has an expansion
\begin{equation}\label{eq:degree-old-basis-expansion}
        G=\nu^{-1}\sum_{r\mid N/M}b_r\Phi_{r,u}
        \quad\text{in }\Hom_\Q(J_0(N),E')\otimes_\Z\Q,
        \qquad b_r\in\Z,\quad \nu\in\Z_{>0}.
\end{equation}
For \(\boldsymbol{b}=(b_r)_{r\mid N/M}\), one has
\begin{equation}\label{eq:degree-formula-old}
 \deg g=\deg\pi_E\cdot\deg u\cdot
 \frac{\boldsymbol{b}^{T} A_{E,N}\boldsymbol{b}}{\nu^2}.
\end{equation}
\end{proposition}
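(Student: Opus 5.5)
The argument has two steps: obtain the expansion from Theorem \ref{thm:rational-old-basis}, then evaluate the diagonal of the degree pairing by bilinearity.

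\textbf{Step 1: the expansion.} By Theorem \ref{thm:rational-old-basis}, the family \(\{\Phi_{r,u}\}\) is a \(\Q\)-basis of \(\Hom_\Q(J_0(N),E')\otimes_\Z\Q\). Hence \(G\otimes1=\sum_r q_r\Phi_{r,u}\) with \(q_r\in\Q\). Taking \(\nu\) to be a common denominator of the \(q_r\) and setting \(b_r=\nu q_r\) gives \eqref{eq:degree-old-basis-expansion}. Equivalently, \(\nu G=\sum_r b_r\Phi_{r,u}\) holds in \(\Hom_\Q(J_0(N),E')\) itself, up to torsion. Since \(\Hom_\Q\) between abelian varieties is torsion-free, this equality holds exactly. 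This lets us avoid working with rational coefficients inside the pairing.

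\textbf{Step 2: the degree.} By Lemma \ref{lem:curve-jacobian} and Proposition \ref{prop:degree-pairing-basic}, \(\deg g=\langle G,G\rangle\). The pairing \(\langle H,K\rangle=(H\circ j_\infty)_*\circ(K\circ j_\infty)^*\) is \(\Z\)-bilinear, because pushforward and pullback are additive in the map. This can be seen most cleanly through the dagger description \(\langle H,K\rangle=H\circ K^\vee\), where \(K^\vee\) is the dual under the principal polarizations and duality is additive. Therefore
\[
\nu^2\deg g=\langle \nu G,\nu G\rangle=\sum_{r,s}b_rb_s\langle\Phi_{r,u},\Phi_{s,u}\rangle .
\]

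\textbf{Step 3: the entries.} It remains to show \(\langle\Phi_{r,u},\Phi_{s,u}\rangle=\deg u\cdot\deg\pi_E\cdot a_{r,s}\). In dagger form, \(\Phi_{r,u}=u\circ\Phi_E\circ\iota_{r,*}\) and \(\Phi_{s,u}^\vee=\iota_s^*\circ\Phi_E^\vee\circ\widehat u\), up to the identifications of \(E,E'\) with their Jacobians. Then \(\Phi_E^\vee=\pi_E^*\), and the dual of \(\iota_{s,*}\) is \(\iota_s^*\). This gives
\[
\langle\Phi_{r,u},\Phi_{s,u}\rangle=u\circ\Phi_E\circ\iota_{r,*}\circ\iota_s^*\circ\pi_E^*\circ\widehat u .
\]
By \eqref{eq:pairing-on-old-translates}, the middle part \(\Phi_E\circ\iota_{r,*}\circ\iota_s^*\circ\pi_E^*\) equals \([a_{r,s}\deg\pi_E]\). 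This is an integer multiplication, so it commutes past \(u\), leaving \(u\circ\widehat u=[\deg u]\) on \(E'\). Dividing by \(\nu^2\) then yields \eqref{eq:degree-formula-old}.

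\textbf{Main obstacle.} The delicate point is Step 3: justifying that pullback along the composite \(u\circ\pi_E\circ\iota_s\) is \(\iota_s^*\circ\pi_E^*\circ u^*\), and that \(u^*:J(E')\to J(E)\) corresponds to \(\widehat u\) under the Abel–Jacobi identifications. Both follow from functoriality of \(f^*\) and the standard fact that Picard pullback along an isogeny of elliptic curves is the dual isogeny. With that in hand, the remaining steps are bookkeeping.
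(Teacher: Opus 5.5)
Your proof is correct and follows the paper's argument: you expand $G$ in the rational old basis, use $\deg g=\langle G,G\rangle$ with bilinearity, and evaluate each entry as $[a_{r,s}\deg\pi_E]$ composed with $u$ on both sides, where your identity $u\circ\widehat u=[\deg u]$ is the paper's $u_*u^*=[\deg u]$. The only cosmetic difference is that you clear the denominator first and compute $\langle\nu G,\nu G\rangle$ inside the integral Hom group, using torsion-freeness, whereas the paper extends the pairing bilinearly to rational combinations.
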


\begin{proof}
The existence of \eqref{eq:degree-old-basis-expansion} follows from Theorem
\ref{thm:rational-old-basis}.
Since the degeneracy maps send the cusp \(\infty\) to \(\infty\), the curve map
corresponding to \(\Phi_{r,u}\) is
\(\Phi_{r,u}\circ j_N=u\circ\pi_E\circ\iota_r\).
By Definition \ref{def:degree-pairing}, the push--pull composite for the two maps
\(\pi_E\circ\iota_r\) and \(\pi_E\circ\iota_s\) is given by
\eqref{eq:pairing-on-old-translates}.
Composing both maps with \(u:E\to E'\) multiplies the pairing by \(\deg u\),
since \(u_*u^*=[\deg u]\). Hence
\begin{equation}\label{eq:pairing-on-u-old-translates}
        \langle \Phi_{r,u}\circ j_N,\Phi_{s,u}\circ j_N\rangle
        =a_{r,s}\deg\pi_E\deg u.
\end{equation}
The homomorphism \(G\) associated with \(g\) satisfies
\(G\circ j_N=t_{-g(\infty)}\circ g\). Translation on \(E'\) does not change
the degree. Composing \eqref{eq:degree-old-basis-expansion} with \(j_N\) yields
\(G\circ j_N=\nu^{-1}\sum_{r\mid N/M}b_r(\Phi_{r,u}\circ j_N)\) in the rational
vector space generated by the old pointed maps \(X_0(N)\to E'\). The degree
pairing extends bilinearly to the rational vector space generated by the old
pointed maps. Using bilinearity and the diagonal identity
\eqref{eq:degree-pairing-diagonal}, we obtain
\[
        \deg g
        =\deg(G\circ j_N)
        =\langle G\circ j_N,G\circ j_N\rangle
        =\frac1{\nu^2}\sum_{r,s\mid N/M}b_rb_s
          \langle \Phi_{r,u}\circ j_N,\Phi_{s,u}\circ j_N\rangle .
\]
Substitution of \eqref{eq:pairing-on-u-old-translates} yields
\[
        \deg g
        =\deg\pi_E\cdot\deg u\cdot
          \frac{\sum_{r,s\mid N/M}b_ra_{r,s}b_s}{\nu^2}
        =\deg\pi_E\cdot\deg u\cdot
          \frac{\boldsymbol{b}^{T} A_{E,N}\boldsymbol{b}}{\nu^2}.
\]
\end{proof}

\section{Denominator estimates and the main theorems}
\label{sec:denominators}

Throughout this section, \(E/\Q\) is the strong Weil curve of conductor \(M\),
\(E'/\Q\) is an elliptic curve in the \(\Q\)-isogeny class of \(E\), and
\(u:E\to E'\) is a generator of \(\Hom_\Q(E,E')\). Put
\((u\circ\pi_E)^*\omega_{E'}=c_{u\circ\pi_E}\eta\), with
\(\eta=f_E(q)dq/q\), where \(\omega_{E'}\) is a minimal N\'eron differential
and \(c_{u\circ\pi_E}>0\). Let
\(N\) be a multiple of \(M\), and put \(R=N/M\). For \(r\mid R\), write
\(\iota_r=\iota_{r,N,M}\) and set
\(\Phi_{r,u,N,M}=u\circ\Phi_E\circ\iota_{r,*}:J_0(N)\to E'\).
Also put \(\eta_r=\iota_r^{*}\eta\).
When \(N\), \(M\), and \(u\) are fixed, we write \(\Phi_{r,u}\) for
\(\Phi_{r,u,N,M}\). We prove denominator bounds for the coordinates of elements
of \(\Hom_\Q(J_0(N),E')\) in the \(\Q\)-basis
\(\{\Phi_{r,u}:r\mid R\}\). Define
\begin{equation}\label{eq:D-factor}
        D_{u\circ\pi_E}(R)=
        \begin{cases}
        1, & R=1,\\[3pt]
        \displaystyle
        \prod_{\ell}\ell^{\ord_\ell(c_{u\circ\pi_E})\max\{1,\ord_\ell(R)\}},
        & R>1.
        \end{cases}
\end{equation}

\begin{lemma}\label{lem:first-n}
Let \(G\in\Hom_\Q(J_0(N),E')\). Suppose that the equality
\[
        G=\frac{1}{\nu}\sum_{r\mid R}b_r\Phi_{r,u},
        \qquad b_r\in\Z,\quad \nu\in\Z_{>0}
\]
holds in \(\Hom_\Q(J_0(N),E')\otimes_\Z\Q\).
Put \(\boldsymbol{b}=(b_r)_{r\mid R}\). Then every coordinate of
\(A_{E,N}\boldsymbol{b}\) is divisible by \(\nu\). In particular,
\(\nu\mid \boldsymbol{b}^{T} A_{E,N}\boldsymbol{b}\).
\end{lemma}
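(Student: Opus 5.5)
The plan is to compose \(G\) with the pullbacks \(\iota_s^*\) and use the level-\(M\) primitivity of Lemma~\ref{lem:optimal-primitivity}. Fix \(s\mid R\) and consider \(G\circ\iota_s^*\in\Hom_\Q(J_0(M),E')\). Since \(G\) is an honest homomorphism over \(\Q\) and \(\iota_s^*:J_0(M)\to J_0(N)\) is defined over \(\Q\), this composite is an integral element of \(\Hom_\Q(J_0(M),E')\). By Lemma~\ref{lem:optimal-primitivity}, this group is \(\Z\cdot(u\circ\Phi_E)\), because \(u\) generates \(\Hom_\Q(E,E')\). Hence \(G\circ\iota_s^*=m_s\,(u\circ\Phi_E)\) for some \(m_s\in\Z\).

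Next I would compute the same composite from the rational expansion. Since composition with \(\iota_s^*\) is \(\Q\)-linear on \(\Hom\otimes\Q\), we get
\[
G\circ\iota_s^*=\frac1\nu\sum_{r\mid R}b_r\,u\circ\Phi_E\circ\iota_{r,*}\circ\iota_s^*
=\frac1\nu\sum_{r\mid R}b_r a_{r,s}\,(u\circ\Phi_E).
\]
This uses the defining relation \eqref{eq:old-degree-matrix-entry}, \(\Phi_E\circ\iota_{r,*}\circ\iota_s^*=[a_{r,s}]\circ\Phi_E\), together with the fact that \(u\) commutes with multiplication by integers. Comparing the two expressions in the rank-one space \(\Hom_\Q(J_0(M),E')\otimes\Q\) is legitimate because \(u\circ\Phi_E\neq0\) is primitive, so coefficients are uniquely determined. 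The comparison gives \(\nu m_s=\sum_r a_{r,s}b_r=(A_{E,N}^T\boldsymbol b)_s\). Since \(A_{E,N}\) is symmetric, as noted after \eqref{eq:pairing-on-old-translates}, this is the \(s\)-th coordinate of \(A_{E,N}\boldsymbol b\), and it is divisible by \(\nu\).

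Finally, \(\boldsymbol b^TA_{E,N}\boldsymbol b=\sum_s b_s(A_{E,N}\boldsymbol b)_s=\nu\sum_s b_sm_s\), so \(\nu\mid\boldsymbol b^TA_{E,N}\boldsymbol b\).

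There is no real obstacle here. The only points needing care are the transpose/index convention for \(a_{r,s}\) and the justification that one may cancel \(u\circ\Phi_E\) in the rank-one rational space. The symmetry already quoted handles the first, and the second follows from torsion-freeness of \(\Hom\).
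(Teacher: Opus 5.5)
Your proposal is correct and is essentially the paper's own proof. You compose \(G\) with \(\iota_s^*\) and use the defining relation \eqref{eq:old-degree-matrix-entry} together with the symmetry of \(A_{E,N}\) to obtain \(G\circ\iota_s^*=\nu^{-1}(A_{E,N}\boldsymbol b)_s\,(u\circ\Phi_E)\); integrality of the coefficient then follows from the primitivity of \(u\circ\Phi_E\) in Lemma~\ref{lem:optimal-primitivity}, and pairing with \(\boldsymbol b\) gives \(\nu\mid\boldsymbol b^{T}A_{E,N}\boldsymbol b\), exactly as in the paper.
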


\begin{proof}
Fix \(s\mid R\). Since \(G\in\Hom_\Q(J_0(N),E')\), the composite
\(G\circ\iota_s^{*}:J_0(M)\to E'\) belongs to
\(\Hom_\Q(J_0(M),E')\). After tensoring with \(\Q\), the old-basis expansion is
\[
        G\circ\iota_s^{*}
        =
        \frac1\nu\sum_{r\mid R}
        b_r\,u\circ\Phi_E\circ\iota_{r,*}\circ\iota_s^{*}.
\]
By the defining relation \eqref{eq:old-degree-matrix-entry}, we obtain
\begin{equation}\label{eq:first-denominator-composite}
        G\circ\iota_s^{*}
        =\frac{(A_{E,N}\boldsymbol{b})_s}{\nu}\,u\circ\Phi_E .
\end{equation}
Here symmetry of \(A_{E,N}\) identifies
\(\sum_{r\mid R}b_ra_{r,s}\) with the \(s\)-th coordinate of
\(A_{E,N}\boldsymbol{b}\).

Lemma \ref{lem:optimal-primitivity} states that \(u\circ\Phi_E\) is primitive in
\(\Hom_\Q(J_0(M),E')\). Since the left-hand side of
\eqref{eq:first-denominator-composite} belongs to
\(\Hom_\Q(J_0(M),E')\), the
scalar \((A_{E,N}\boldsymbol{b})_s/\nu\) is an integer. This holds for every
\(s\mid R\), so every coordinate of \(A_{E,N}\boldsymbol{b}\) is divisible by
\(\nu\). Multiplication by the integral row vector \(\boldsymbol{b}^{T}\)
proves the final divisibility.
\end{proof}

\subsection{The two-cusp estimate}

The first estimate uses only the cusp \(\infty\) and its Fricke translate. It
does not require a description of the full lattice of differentials integral at
all cusps.

\begin{lemma}\label{lem:q-integral}
Let \(N\) be a positive integer, and let \(A/\Q\) be an abelian variety with
N\'eron model \(\mathcal A/\Z\). Let
\(\omega\in H^0(\mathcal A,\Omega^1_{\mathcal A/\Z})\) be an invariant
differential, and let \(h:X_0(N)\to A\) be a morphism over \(\Q\). Then the
\(q\)-expansion at \(\infty\) of \(h^*\omega\) is integral. The same conclusion
holds for \((h\circ w_N)^*\omega\), where \(w_N\) is the Fricke involution.
\end{lemma}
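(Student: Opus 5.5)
We use the integral model of \(X_0(N)\) over \(\Z\) near the cusp \(\infty\) together with the Néron mapping property for \(A\). The plan is to reduce the statement to the fact that the formal completion along the cusp \(\infty\) is \(\Z[[q]]\), and that integral differentials pull back to integral differentials there.

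First, we reduce to a pointed map. Translation on \(A\) by the rational point \(h(\infty)\) preserves invariant differentials, as in Lemma \ref{lem:curve-jacobian}. So we may assume \(h(\infty)=0\) and write \(h=H\circ j_N\) with \(H\in\Hom_\Q(J_0(N),A)\), using \eqref{eq:jacobian-universal-property}. By the Néron mapping property \cite{BLR}, \(H\) extends to a homomorphism \(\mathcal J\to\mathcal A\) of Néron models over \(\Z\). Hence \(H^*\omega\) is a global invariant differential on the Néron model \(\mathcal J\) of \(J_0(N)\). It therefore suffices to treat \(A=J_0(N)\) and \(h=j_N\), with \(\omega\) an arbitrary integral invariant differential on \(\mathcal J\).

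Second, we use the standard integral model of \(X_0(N)\) over \(\Z\) (Deligne--Rapoport, Katz--Mazur). The cusp \(\infty\) extends to a section lying in the smooth locus. The completed local ring along this section is \(\Z[[q]]\), via the Tate curve \(\mathrm{Tate}(q)\) with its canonical cyclic subgroup \(\mu_N\), and this identification is compatible with the analytic \(q\)-parameter.

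Third, we extend the Abel--Jacobi map. Let \(X^{\mathrm{sm}}\) be the smooth locus of this model. The map \(j_N\) extends to a morphism \(X^{\mathrm{sm}}\to\mathcal J\) over \(\Z\): by the Néron mapping property this holds over the smooth locus, after identifying \(\mathcal J^0\) with \(\operatorname{Pic}^0\) of the model as in Raynaud's theorem \cite{BLR}. Alternatively, one can argue that the image of \(j_N\) is determined over \(\Z[[q]]\)-points by the extension property for Néron models over the regular base \(\operatorname{Spec}\Z[[q]]\). Pulling \(\omega\) back along \(\operatorname{Spf}\Z[[q]]\to X^{\mathrm{sm}}\to\mathcal J\) then gives an element of \(\Z[[q]]\,dq\). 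Since \(dq/q\) differs from \(dq\) only by the unit factor \(1/q\), this shows that the \(q\)-expansion of \(h^*\omega\) at \(\infty\) lies in \(\Z[[q]]\,dq/q\) and is integral.

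For \(h\circ w_N\) there is nothing new to prove: \(w_N\) is defined over \(\Q\), so \(h\circ w_N\) is again a morphism \(X_0(N)\to A\) over \(\Q\), and the first part applies to it. Alternatively, the \(q\)-expansion of \((h\circ w_N)^*\omega\) at \(\infty\) is the expansion of \(h^*\omega\) at the cusp \(0\), which is also a \(\Q\)-rational cusp with a \(\Z[[q]]\) completion.

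The main obstacle is the third step, extending \(j_N\) integrally at primes \(p\mid N\), where the model of \(X_0(N)\) is not smooth. What makes it work is that the section \(\infty\) meets only the smooth locus, and that Néron models extend maps from smooth schemes. If the direct extension is delicate, the fallback is the known result of Mazur and Edixhoven that the pullback of integral Néron differentials to the smooth locus is integral. Combined with the \(q\)-expansion principle at \(\infty\), this gives the claim.
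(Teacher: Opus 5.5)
Your proposal is correct and is essentially the paper's argument: take a $\Z$-smooth open of the integral model of $X_0(N)$ containing the section $\infty$, whose completion along that section is $\operatorname{Spf}\Z[[q]]$; extend the map to the N\'eron model by the N\'eron mapping property; read off $h^*\omega\in\Z[[q]]\,dq$; and treat $h\circ w_N$ as just another $\Q$-morphism. The only difference is that the paper applies the N\'eron mapping property directly to $h$, so your preliminary reduction to $j_N$ and the alternatives you mention (Raynaud's identification with $\operatorname{Pic}^0$, the argument over $\operatorname{Spec}\Z[[q]]$, the Mazur--Edixhoven fallback) are not needed.
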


\begin{proof}
Let \(\mathcal X_0(N)^\infty\) be the \(\Z\)-smooth open subscheme obtained by
removing the fibral irreducible components that do not meet the cusp \(\infty\),
as in \cite{CesnaviciusNeururerSaha}. Its generic fiber contains
\(X_0(N)\) near \(\infty\), and its completion along the cusp is
\(\operatorname{Spf}(\Z[[q]])\). By the N\'eron mapping property
\cite{BLR}, the restriction of \(h\) to the generic
fiber of this smooth open extends uniquely to a morphism
\(\mathcal X_0(N)^\infty\to\mathcal A\). Therefore \(h^*\omega\) is a regular
relative differential on \(\mathcal X_0(N)^\infty\). On the formal completion it
has the form \(a(q)dq\) with \(a(q)\in\Z[[q]]\), equivalently
\(h^*\omega=qa(q)dq/q\), with integral Fourier coefficients. The N\'eron
mapping property applied to the \(\Q\)-morphism \(h\circ w_N\)
proves the second assertion.
\end{proof}

We apply Lemma \ref{lem:q-integral} to the two Fricke-related cusps
to obtain the following coefficient bound.

\begin{lemma}\label{lem:two-cusp}
Let \((x_r)_{r\mid R}\) be a family of rational numbers, and suppose that
\(\omega=\sum_{r\mid R}x_r\eta_r\) is the pullback of a N\'eron differential
under a morphism from \(X_0(N)\) to an abelian variety over \(\Q\).
Then
\begin{equation}\label{eq:gcd-condition}
        \gcd(r,R/r)x_r\in\Z
        \qquad(r\mid R).
\end{equation}
\end{lemma}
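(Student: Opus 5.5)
The plan is to read off integrality of the coefficients \(x_r\) from the \(q\)-expansion of \(\omega\) at two cusps: at \(\infty\) directly, and at \(0\) via the Fricke involution. By Lemma \ref{lem:q-integral}, both \(h^*\omega_A=\omega\) and \((h\circ w_N)^*\omega_A=w_N^*\omega\) have integral \(q\)-expansions at \(\infty\), where \(h\) is the given morphism. The first will give \(r x_r\in\Z\) and the second \((R/r)x_r\in\Z\). Since \(\gcd(r,R/r)\) is a \(\Z\)-linear combination of \(r\) and \(R/r\), this gives \eqref{eq:gcd-condition}.

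\emph{Step 1 (cusp \(\infty\)).} By Lemma \ref{lem:q-expansion-degeneracy},
\[
\eta_r=f_E(q^r)\,\frac{d(q^r)}{q^r}=r\,f_E(q^r)\,\frac{dq}{q}.
\]
Hence the coefficient of \(q^n\,dq/q\) in \(\omega\) is
\[
\sum_{r\mid R,\ r\mid n} r\,x_r\,a_{n/r},
\]
and this coefficient is an integer. I will prove \(r x_r\in\Z\) by induction on \(r\mid R\), ordered by divisibility. Take \(n=r\). Since \(a_1=1\), the coefficient of \(q^r\) equals \(r x_r\) plus \(\sum_{r'\mid r,\ r'<r} r' x_{r'} a_{r/r'}\). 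The sum lies in \(\Z\) by the inductive hypothesis, because \(a_m\in\Z\). So \(r x_r\in\Z\). This is a triangular (Möbius-type) inversion.

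\emph{Step 2 (Fricke translate).} I will check on the upper half-plane that \(\iota_{r,N,M}\circ w_N=w_M\circ\iota_{R/r,N,M}\). Indeed,
\[
\tau\mapsto -\frac{1}{N\tau}\mapsto -\frac{r}{N\tau}=-\frac{1}{M\,(R/r)\tau}.
\]
Both sides are defined over \(\Q\) and agree analytically, so they agree as morphisms. Since \(f_E\) is a newform of level \(M\), it is an eigenform for \(w_M\) with eigenvalue \(\epsilon=\pm1\). Therefore \(w_M^*\eta=\epsilon\eta\), and
\[
w_N^*\eta_r=\iota_{R/r}^*w_M^*\eta=\epsilon\,\eta_{R/r}.
\]
Hence \(w_N^*\omega=\epsilon\sum_r x_r\eta_{R/r}\), which is integral at \(\infty\) by the second part of Lemma \ref{lem:q-integral}. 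Applying the Step 1 inversion with the relabelling \(s=R/r\) gives \(s\,x_{R/s}\in\Z\), that is, \((R/r)x_r\in\Z\).

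\emph{Step 3.} Choose integers \(\alpha,\beta\) with \(\alpha r+\beta R/r=\gcd(r,R/r)\). Then \(\gcd(r,R/r)\,x_r=\alpha(rx_r)+\beta((R/r)x_r)\in\Z\).

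The main point needing care is Step 2. One must confirm that \(w_N\) intertwines the degeneracy maps as stated, with the correct index swap \(r\leftrightarrow R/r\). One must also confirm that the pullback of the differential \(\eta=f_E\,dq/q\) under \(w_M\) is exactly \(\pm\eta\), with no extra normalizing factor such as a power of \(M\). The latter holds because the weight-two slash action by the Fricke matrix on the differential \(f(\tau)\,d\tau\) has no extra scalar: the factor \(M\tau^2\) in \(f(-1/(M\tau))\) is cancelled by \(d(-1/(M\tau))=d\tau/(M\tau^2)\). Everything else is formal.
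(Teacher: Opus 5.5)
Your proof is correct and matches the paper's: the triangular inversion at \(\infty\) gives \(rx_r\in\Z\), the Fricke relation \(w_N^*\eta_r=\pm\eta_{R/r}\) with the same inversion gives \((R/r)x_r\in\Z\), and a B\'ezout combination finishes. The only difference is cosmetic. You derive the Fricke relation from the intertwining identity \(\iota_r\circ w_N=w_M\circ\iota_{R/r}\) and the \(w_M\)-eigenvalue of \(f_E\), whereas the paper verifies it by a direct computation with \(f_E\).
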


\begin{proof}
At \(\infty\), Lemma \ref{lem:q-expansion-degeneracy} yields
\(\eta_r=r\sum_{n\ge1}a_nq^{rn}dq/q\).
The coefficients of \(\omega\) are integral by Lemma \ref{lem:q-integral}. Order
the divisors of \(R\) increasingly. In the coefficient of \(q^r\), the term
indexed by \(r\) contributes \(rx_r\) because \(a_1=1\), while all other
contributions come from proper divisors \(r'\mid r\), \(r'<r\). By induction,
\(r'x_{r'}\in\Z\), and the Fourier coefficients \(a_n\) of \(f_E\) are
integers. Hence every earlier contribution to the coefficient of \(q^r\) is
integral.
Thus
\begin{equation}\label{eq:dx}
        rx_r\in\Z .
\end{equation}

Let \(w_N(z)=-1/(Nz)\). By the Atkin--Lehner eigenvalue relation for newforms
\cite{AtkinLehner,Li}, choose \(\lambda_E\in\{\pm1\}\) so that
\(f_E(-1/(Mz))=\lambda_E Mz^2f_E(z)\).
The transformation formula is
\begin{equation}\label{eq:fricke}
        w_N^*\eta_r=\lambda_E\eta_{R/r}.
\end{equation}
The equality follows from
\[
        \frac{r}{Nz^2}f_E\left(-\frac{r}{Nz}\right)dz
        =\lambda_E\frac{R}{r}f_E\left(\frac{R}{r}z\right)dz.
\]
The differential \(w_N^*\omega\) is the pullback of the chosen N\'eron
differential under the composite with \(w_N\). By \eqref{eq:fricke},
\(w_N^*\omega=\lambda_E\sum_{e\mid R}x_{R/e}\eta_e\). Applying the
triangular argument to the expansion of \(w_N^*\omega\) shows that
\(ex_{R/e}\in\Z\) for every \(e\mid R\). Taking \(e=R/r\) yields
\begin{equation}\label{eq:rdx}
        (R/r)x_r\in\Z .
\end{equation}
A B\'ezout combination of \eqref{eq:dx} and \eqref{eq:rdx} proves
\eqref{eq:gcd-condition}.
\end{proof}

At a fixed prime \(\ell\), the two-cusp estimate already controls the
coefficients indexed by divisors prime to \(\ell\).

\begin{proposition}\label{prop:remove}
Let \(\ell\) be a prime, write \(a=\ord_\ell(c_{u\circ\pi_E})\), and let \(\Z_{(\ell)}\) denote
the localization of \(\Z\) at \(\ell\). Suppose that an element
\(G\in\Hom_\Q(J_0(N),E')\otimes_\Z\Z_{(\ell)}\) has old-basis expansion
\[
        G=\sum_{r\mid R}x_r\Phi_{r,u}.
\]
Then \(\ell^a x_r\in\Z_{(\ell)}\) for every \(r\mid R\) with \(\ell\nmid r\).
\end{proposition}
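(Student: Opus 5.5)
The plan is to pull back the minimal N\'eron differential \(\omega_{E'}\) along the curve map attached to \(G\) and read off the coordinates \(x_r\) from \(q\)-expansions, using the two-cusp estimate of Lemma~\ref{lem:two-cusp} after localizing at \(\ell\).

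\emph{Reduction to an integral homomorphism.} Choose \(m\in\Z\) with \(\ell\nmid m\) such that \(mG\in\Hom_\Q(J_0(N),E')\). Since \(m\) is a unit in \(\Z_{(\ell)}\), replacing \(G\) by \(mG\) does not affect the claim. So we may assume \(G\) is a genuine homomorphism, now with coordinates \(x_r\in\Q\).

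\emph{Pullback of the N\'eron differential.} The curve map \(G\circ j_N\) pulls \(\omega_{E'}\) back to \(\sum_r x_r(\Phi_{r,u}\circ j_N)^*\omega_{E'}\). This holds because pullback of invariant differentials is additive in the homomorphism: \((H+K)^*\omega=H^*\omega+K^*\omega\) for invariant \(\omega\), and it extends \(\Q\)-linearly after clearing the denominator. By the commutative diagram and Lemma~\ref{lem:fixed-target-manin},
\[
(\Phi_{r,u}\circ j_N)^*\omega_{E'}=\iota_r^*(u\circ\pi_E)^*\omega_{E'}=c_{u\circ\pi_E}\eta_r .
\]
Hence \(G\circ j_N\) pulls \(\omega_{E'}\) back to \(\sum_r (c_{u\circ\pi_E}x_r)\eta_r\). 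This is the pullback of a N\'eron differential under a morphism \(X_0(N)\to E'\), so Lemma~\ref{lem:two-cusp} applies with coefficients \(c_{u\circ\pi_E}x_r\). It gives \(\gcd(r,R/r)\,c_{u\circ\pi_E}x_r\in\Z\) for every \(r\mid R\).

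\emph{Localizing at \(\ell\).} If \(\ell\nmid r\), then \(\ell\nmid\gcd(r,R/r)\), so \(\gcd(r,R/r)\) is a unit in \(\Z_{(\ell)}\). Likewise \(c_{u\circ\pi_E}/\ell^a\) is a unit in \(\Z_{(\ell)}\). Together these give \(\ell^a x_r\in\Z_{(\ell)}\).

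The only delicate point is the additivity of pullbacks of invariant differentials under sums of homomorphisms into \(E'\) (the group law pulls back invariant forms additively, via the standard identity \(\mathrm{add}^*\omega=p_1^*\omega+p_2^*\omega\)). It also requires that translations from the base point are harmless, which Lemma~\ref{lem:curve-jacobian} covers. The integrality input itself is already packaged in Lemma~\ref{lem:two-cusp}.
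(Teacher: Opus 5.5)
Your proposal is correct and follows the paper's proof essentially step for step. Both arguments clear a denominator prime to \(\ell\), identify the pullback of \(\omega_{E'}\) as \(c_{u\circ\pi_E}\sum_{r\mid R}x_r\eta_r\), apply Lemma~\ref{lem:two-cusp}, and then use that \(\gcd(r,R/r)\) and \(c_{u\circ\pi_E}/\ell^a\) are units in \(\Z_{(\ell)}\) when \(\ell\nmid r\).
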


\begin{proof}
Choose an integer \(n\) prime to \(\ell\) such that
\(nG\in\Hom_\Q(J_0(N),E')\). The pointed curve map \((nG)\circ j_N\) pulls
\(\omega_{E'}\) back to
\[
        n\sum_{r\mid R}x_r(\Phi_{r,u}\circ j_N)^*\omega_{E'}
        =
        nc_{u\circ\pi_E}\sum_{r\mid R}x_r\eta_r .
\]
Lemma \ref{lem:two-cusp} implies
\(\gcd(r,R/r)nc_{u\circ\pi_E}x_r\in\Z\). If \(\ell\nmid r\), then
\(\ell\nmid\gcd(r,R/r)\). Since \(n\) and \(c_{u\circ\pi_E}/\ell^a\) are units in
\(\Z_{(\ell)}\), one has \(\ell^ax_r\in\Z_{(\ell)}\).
\end{proof}

\subsection{Repeated-prime descent}

When \(\ell^2\mid R\), Proposition \ref{prop:remove} does not by itself control
the coefficients indexed by divisors divisible by \(\ell\). We descend from
level \(N\) to level \(N/\ell\) through the corresponding degeneracy map.
Total ramification at \(\infty\) implies that its pushforward on Jacobians is
surjective with geometrically connected kernel. Precomposition with this
pushforward then has saturated image on Hom groups, so integrality descends
from level \(N\) to level \(N/\ell\).

\begin{proposition}\label{prop:degeneracy-degree-ramification}
Let \(N\) be a positive integer, and let \(\ell\) be a prime with
\(\ell^2\mid N\). Put \(N'=N/\ell\), and let
\(\delta_\ell=\iota_{\ell,N,N'}:X_0(N)\to X_0(N')\) be the degeneracy map
analytically induced by \(\tau\mapsto \ell\tau\).
Then \(\deg(\delta_\ell)=\ell\), and \(\delta_\ell\) is totally ramified at
\(\infty\). More
precisely, if \(q_N\) and \(q_{N'}\) denote the standard parameters at
\(\infty\), then
\begin{equation}\label{eq:q-total-ramification}
        \delta_\ell^*(q_{N'})=q_N^\ell,
\end{equation}
and \(e_\infty(\delta_\ell)=\ell=\deg(\delta_\ell)\).
\end{proposition}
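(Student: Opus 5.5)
The plan is to first compute the degree of \(\delta_\ell\) as an index of congruence subgroups, then read off the ramification at \(\infty\) from the local parameters, and finally combine the two.

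\emph{Degree.} Over \(\C\), write \(\delta_\ell\) as the map \(\Gamma_0(N)\backslash\mathbb H^*\to\Gamma_0(N')\backslash\mathbb H^*\) induced by \(\tau\mapsto\ell\tau\). Conjugating by \(\alpha=\begin{pmatrix}\ell&0\\0&1\end{pmatrix}\) gives \(\alpha\Gamma_0(N)\alpha^{-1}\subseteq\Gamma_0(N')\), and this conjugate is \(\Gamma_0(N')\cap\Gamma^0(\ell)\). The matrix \(\begin{pmatrix}a&b\\c&d\end{pmatrix}\in\Gamma_0(N)\) is sent to \(\begin{pmatrix}a&\ell b\\c/\ell&d\end{pmatrix}\), and conversely an element of \(\Gamma_0(N')\) with upper-right entry divisible by \(\ell\) comes from \(\Gamma_0(N)\).

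Hence \(\deg\delta_\ell=[\Gamma_0(N'):\Gamma_0(N')\cap\Gamma^0(\ell)]\), using \(-1\) in both groups so that there is no \(\pm\) correction. This index is a ratio of indices in \(\mathrm{SL}_2(\Z)\). Alternatively it equals \([\Gamma_0(N'):\Gamma_0(N)]\), since \(\Gamma^0(\ell)\cap\Gamma_0(N')\) is conjugate to \(\Gamma_0(N)\) and conjugation preserves the index computed via covolume. The index formula \(N\prod_{p\mid N}(1+1/p)\) then gives a ratio of \(\ell\), precisely because \(\ell\mid N'\) (this is where \(\ell^2\mid N\) is used): the factor \((1+1/\ell)\) already appears at level \(N'\).

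\emph{Ramification at \(\infty\).} By Lemma~\ref{lem:q-expansion-degeneracy} with \(r=\ell\), \(\delta_\ell^*(q_{N'})=q_N^\ell\) on completed local rings. Both \(q_N\) and \(q_{N'}\) are genuine local uniformizers at \(\infty\), because the cusp \(\infty\) has width \(1\) on every \(X_0(L)\). The pullback of a uniformizer therefore has valuation \(\ell\), so \(e_\infty(\delta_\ell)=\ell\).

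\emph{Total ramification.} Since \(\delta_\ell(\infty)=\infty\) and \(\sum_{P\mapsto\infty}e_P=\deg\delta_\ell=\ell\), the point \(\infty\) is the only preimage of \(\infty\). The equality \(e_\infty=\deg\) is exactly total ramification.

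\emph{Main obstacle.} The only real care is the index computation and making sure the hypothesis \(\ell^2\mid N\) is used. Without it the degree would be \(\ell+1\) and the map would not be totally ramified. I would compute the index directly from the formula \([\mathrm{SL}_2(\Z):\Gamma_0(L)]=L\prod_{p\mid L}(1+p^{-1})\), taking \(L=N\) and \(L=N'\).
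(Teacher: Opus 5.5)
Your proposal is correct and follows the paper's proof essentially step for step: you use the same conjugation identifying \(\alpha\Gamma_0(N)\alpha^{-1}\) with \(\Gamma_0(N')\cap\Gamma^0(\ell)\), the same comparison of the width-one \(q\)-parameters at \(\infty\), and the same deduction of total ramification from \(e_\infty=\deg\). The only variation is the index computation: the paper counts cosets directly with the representatives \(T^{-j}\) for \(0\le j<\ell\), using that \(\ell\mid c\) forces \(d\) to be a unit mod \(\ell\). You instead use covolume invariance under conjugation to reduce to \([\Gamma_0(N'):\Gamma_0(N)]\), and then apply the formula \(L\prod_{p\mid L}(1+p^{-1})\). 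Both routes are valid and use \(\ell^2\mid N\) at the analogous point.
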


\begin{proof}
We first compute the degree from the corresponding congruence-subgroup index.
We then compare local parameters at \(\infty\).

Let \(\alpha=\left(\begin{smallmatrix}\ell&0\\0&1\end{smallmatrix}\right)\).
Set \(H=\alpha\Gamma_0(N)\alpha^{-1}\). We claim that
\[
        H=\Gamma_0(N')\cap\Gamma^0(\ell),
\]
where \(\Gamma^0(\ell)\) denotes the subgroup whose upper-right entry is
divisible by \(\ell\). For
\(\gamma=\left(\begin{smallmatrix}a&b\\c&d\end{smallmatrix}\right)\in
\Gamma_0(N)\), one has
\[
 \alpha\gamma\alpha^{-1}
 =\begin{pmatrix}a&\ell b\\c/\ell&d\end{pmatrix}.
\]
The entry \(c/\ell\) is divisible by \(N'=N/\ell\) because \(c\) is divisible
by \(N\).
Thus
\(\alpha\Gamma_0(N)\alpha^{-1}\subseteq\Gamma_0(N')\cap\Gamma^0(\ell)\).
Conversely, conjugating an element of
\(\Gamma_0(N')\cap\Gamma^0(\ell)\) by \(\alpha^{-1}\) produces an element of
\(\Gamma_0(N)\), so equality holds. By Definition \ref{def:degeneracy}, this
conjugation identifies \(\delta_\ell\) on the open modular curves with the
morphism induced by \(H\subseteq\Gamma_0(N')\).

We compute the index of \(H\) in \(\Gamma_0(N')\). Put
\(T=\left(\begin{smallmatrix}1&1\\0&1\end{smallmatrix}\right)\). For
\(\gamma=\left(\begin{smallmatrix}a&b\\c&d\end{smallmatrix}\right)\in\Gamma_0(N')\),
we have \(c\equiv0\pmod\ell\) because \(\ell\mid N'\). The identity
\(ad-bc=1\) makes \(d\) a unit modulo \(\ell\). There is therefore
a unique \(j\pmod\ell\) such that \(b+jd\equiv0\pmod\ell\). The upper-right
entry of \(T^j\gamma\) is \(b+jd\), so \(T^j\gamma\in H\). Hence every left
coset of \(H\) is represented by one of \(1,T^{-1},\dots,T^{-(\ell-1)}\).
These representatives are distinct because
\(T^{i-j}\in H\) holds exactly when
\(i\equiv j\pmod\ell\). Consequently, \([\Gamma_0(N'):H]=\ell\). Both groups
contain \(-I\), so the same index is obtained after passage to
\(\mathrm{PSL}_2(\mathbb R)\). Thus the induced map of open modular curves has
degree \(\ell\), as does its extension to the compact curves.

At the cusp \(\infty\) on \(X_0(N)\) and \(X_0(N')\), the width is one because
\(T\) lies in both \(\Gamma_0(N)\) and \(\Gamma_0(N')\). Thus the standard
completed local parameters are \(q_N=e^{2\pi i\tau}\) and
\(q_{N'}=e^{2\pi i\tau}\). Since \(\delta_\ell\) is induced by
\(\tau\mapsto\ell\tau\), we obtain
\(q_{N'}\circ \delta_\ell=e^{2\pi i\ell\tau}=q_N^\ell\), which is
\eqref{eq:q-total-ramification}. For a finite morphism of smooth
curves, the ramification index at a point is the order of the pullback of a
uniformizer. Hence \(e_\infty(\delta_\ell)=\ell\). Since
\(e_\infty(\delta_\ell)=\deg(\delta_\ell)\), the point \(\infty\) is the unique
point above the target cusp \(\infty\), and the ramification there is total.
\end{proof}

The total ramification in Proposition
\ref{prop:degeneracy-degree-ramification} precludes any nontrivial \'etale
intermediate factor.

\begin{proposition}\label{prop:connected-degeneracy-kernel}
With the notation of Proposition
\ref{prop:degeneracy-degree-ramification},
\((\delta_\ell)_*:J_0(N)\to J_0(N')\) is surjective and
\(K_\ell:=\ker((\delta_\ell)_*)\) is geometrically
connected.
\end{proposition}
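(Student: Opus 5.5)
The plan is to use the standard duality between pushforward surjectivity with connected kernel and pullback injectivity. For a finite morphism \(f:C\to C'\) of curves, \(f_*\) is the dual of \(f^*\) with respect to the canonical principal polarizations. Moreover, \(\ker(f^*)\) is a finite group scheme. The aim is to show \(f^*:J(C')\to J(C)\) is injective (as a map of group schemes), and then deduce the claim about \(f_*\) by duality.

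First, \(f_*\circ f^*=[\deg f]\) by \eqref{eq:push-pull-degree}. Hence \(f_*\) is surjective, since \([\ell]\) is. Second, I reduce connectedness of \(\ker f_*\) to injectivity of \(f^*\). Write \(f_*=\widehat{f^*}\) under the principal polarizations. Let \(K^0\) be the identity component of \(K_\ell=\ker f_*\). Then \(J_0(N)/K^0\to J_0(N')\) is an isogeny with kernel \(K_\ell/K^0\), which is a finite group scheme. Dualizing, \(f^*\) factors through the dual isogeny \(J_0(N')\to (J_0(N)/K^0)^\vee\), whose kernel is the Cartier dual of \(K_\ell/K^0\). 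Thus \(K_\ell\) is connected precisely when \(f^*\) has trivial kernel. This is the standard fact that \(\ker\widehat\phi\) is dual to \(\pi_0\ker\phi\) for surjective \(\phi\); I would cite it from Milne or Conrad--Edixhoven--Stein, or run the argument above.

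Third, and this is the main step, I would show \(\ker(\delta_\ell^*)=0\) using total ramification at \(\infty\) (Proposition \ref{prop:degeneracy-degree-ramification}). I work over \(\bar\Q\), which suffices for geometric connectedness in characteristic zero. Suppose \(\delta_\ell^*\) has a nontrivial kernel. Then there is a nontrivial finite subgroup \(G\subset\ker\delta_\ell^*\), and after passing to a cyclic subgroup I may take \(G\simeq\Z/p\). By Cartier duality \(\mu_p\simeq\Z/p\) over \(\bar\Q\). Such a subgroup corresponds to a nontrivial connected étale cyclic cover \(Y\to X_0(N')\), obtained by pulling back \([p]\)-type isogenies. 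Concretely, a point \(x\in J_0(N')[p]\) gives a degree-\(p\) étale cover \(Y_x\). Its pullback along \(\delta_\ell\) is trivial exactly when \(\delta_\ell^*x=0\). So a nonzero \(x\) in the kernel would make the étale cover \(Y_x\to X_0(N')\) become split after base change by \(\delta_\ell\). Equivalently, \(\delta_\ell\) would factor through \(Y_x\to X_0(N')\), since a section of the split pullback gives a map \(X_0(N)\to Y_x\) over \(X_0(N')\).

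That factorization contradicts total ramification. The point \(\infty\) maps to \(\infty\) with \(e=\deg\delta_\ell\), while \(Y_x\to X_0(N')\) is unramified of degree \(p>1\). The ramification index of the composite at \(\infty\) is therefore at most \(\deg(X_0(N)\to Y_x)=\ell/p<\ell\), which is absurd. The point I expect to need the most care is justifying that a trivialized pulled-back torsor yields a factorization. I would handle it via the identification \(H^1_{et}(X,\mu_p)\supset \operatorname{Pic}^0(X)[p]\) and Kummer theory. Alternatively, I would use the maximal abelian unramified-cover description: \(\ker f^*\) is dual to the Galois group of the maximal abelian étale subcover through which \(f\) factors. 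With either approach, total ramification forces that subcover to be trivial, so \(f^*\) is injective and \(K_\ell\) is geometrically connected.
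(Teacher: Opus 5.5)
Your proof is correct, but it takes a different route from the paper's.

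The paper argues topologically. Total ramification rules out a connected unramified intermediate cover of \(\delta_{\ell,\C}\), so \(\pi_1(X_0(N)(\C))\to\pi_1(X_0(N')(\C))\) is surjective, and hence so is the map on \(H_1(-,\Z)\). In the analytic uniformization, the component group of \(K_\ell\) is then \(\Lambda'/F(\Lambda)=0\).

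You instead pass to the dual map. Since \(\delta_\ell^*\) factors through the Cartier dual of \(K_\ell/K_\ell^0\), injectivity of \(\delta_\ell^*\) forces \(K_\ell\) to be connected. You then show \(\ker(\delta_\ell^*)=0\) by Kummer theory. A nonzero \(x\) of prime order \(p\) in the kernel has \(p=\ell\), because \(\ker\delta_\ell^*\subseteq J_0(N')[\ell]\) by push--pull. Such an \(x\) gives a connected \(\mu_p\)-torsor over \(X_0(N')\) whose pullback is trivial. The step you flagged is fine: over \(\bar\Q\) the Kummer map \(H^1_{\mathrm{et}}(X,\mu_p)\to\operatorname{Pic}(X)[p]\) is an isomorphism, since \(\bar\Q^\times\) is \(p\)-divisible. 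So the pulled-back torsor is trivial exactly when \(\delta_\ell^*x=0\), and a section of the split torsor gives the factorization through an \'etale cover of degree \(p\).

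Both proofs rest on the same input: total ramification excludes \'etale intermediate factors. Yours is purely algebraic and needs only cyclic covers. It avoids Riemann existence and period lattices, and it gives injectivity of \(\delta_\ell^*\) as a byproduct. The paper's avoids Cartier duality and the torsor bookkeeping. Its reduction to surjectivity on \(\pi_1\) is also exactly what is reused in Proposition \ref{prop:intermediate-connected-pushforward}, where total ramification fails and surjectivity is instead obtained group-theoretically.
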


\begin{proof}
By Proposition \ref{prop:degeneracy-degree-ramification}, the map
\(\delta_\ell\) has degree \(\ell\) and is totally ramified at the cusp
\(\infty\).

We first verify that no nontrivial connected finite \'etale intermediate
factor can occur. Suppose that the base change \(\delta_{\ell,\C}\) of
\(\delta_\ell\) to \(\C\) factored as
\(X_0(N)_{\C}\to Y\to X_0(N')_{\C}\), where \(Y\) is connected and
\(Y\to X_0(N')_{\C}\) is \'etale of degree \(m>1\). The ramification index of
\(\delta_{\ell,\C}\) at \(\infty\) would then be at most
\[
        \deg(X_0(N)_{\C}\to Y)
        =
        \frac{\deg(\delta_\ell)}{m}
        <
        \deg(\delta_\ell),
\]
contrary to total ramification.
The image of
\[
 \pi_1\bigl(X_0(N)(\C)\bigr)\longrightarrow
 \pi_1\bigl(X_0(N')(\C)\bigr)
\]
has finite index because \(\delta_{\ell,\C}\) has nonzero degree. If this image
were proper, the corresponding connected finite unramified cover of
\(X_0(N')(\C)\) would give a nontrivial unramified intermediate factor of
\(\delta_{\ell,\C}\), contrary to total ramification. Hence the map on
fundamental groups, and therefore the induced map on \(H_1(-,\Z)\), is
surjective.

The push--pull identity
\((\delta_\ell)_*\circ\delta_\ell^*=[\ell]\) shows that
\((\delta_\ell)_*\) is surjective. Under the analytic uniformization of the
Jacobians \cite{BirkenhakeLange}, its map on period lattices is
identified with the induced map
\[
 H_1\bigl(X_0(N)(\C),\Z\bigr)\longrightarrow
 H_1\bigl(X_0(N')(\C),\Z\bigr).
\]
If a surjective homomorphism \(V/\Lambda\to W/\Lambda'\) of complex
tori is induced by a linear map \(F:V\to W\) satisfying
\(F(\Lambda)\subseteq\Lambda'\), then the component group of its kernel is
\(\Lambda'/F(\Lambda)\). The surjectivity on integral homology therefore
implies that the kernel is connected over \(\C\). Thus \(K_\ell\) is
geometrically connected over \(\Q\).
\end{proof}

Because \(\ker((\delta_\ell)_*)\) is connected, a homomorphism that
becomes divisible by an integer after precomposition with
\((\delta_\ell)_*\) is already divisible by that integer.

\begin{lemma}\label{lem:connected-descent}
With the notation of Proposition
\ref{prop:connected-degeneracy-kernel}, let
\(A/\Q\) be an abelian variety. Suppose that
\(H_0\circ(\delta_\ell)_*=nG\) for an integer \(n\ge1\), where
\(H_0\in\Hom_\Q(J_0(N'),A)\) and \(G\in\Hom_\Q(J_0(N),A)\). Then there is a
unique \(H_1\in\Hom_\Q(J_0(N'),A)\) such that \(H_0=nH_1\) and
\(G=H_1\circ(\delta_\ell)_*\).
\end{lemma}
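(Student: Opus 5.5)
Write \(p=(\delta_\ell)_*:J_0(N)\to J_0(N')\). By Proposition~\ref{prop:connected-degeneracy-kernel}, \(p\) is surjective with geometrically connected kernel \(K_\ell\). The plan is to show that \(H_0\) kills the \(n\)-torsion of \(J_0(N')\) and then divide by \(n\).

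\emph{Uniqueness.} Uniqueness is immediate. Since \(\Hom_\Q(J_0(N'),A)\) is torsion-free, \(nH_1=H_0\) determines \(H_1\). Alternatively, surjectivity of \(p\) shows that \(H_1\circ p\) determines \(H_1\).

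\emph{Existence: \(H_0\) kills \(J_0(N')[n]\).} Let \(y\in J_0(N')[n](\bar\Q)\) and choose \(x\in J_0(N)(\bar\Q)\) with \(p(x)=y\). Then \(nx\in K_\ell(\bar\Q)\). Because \(K_\ell\) is a connected commutative algebraic group over an algebraically closed field of characteristic zero, it is divisible. So there is \(k\in K_\ell(\bar\Q)\) with \(nk=nx\). Put \(x'=x-k\). Then \(x'\in J_0(N)[n]\) and \(p(x')=y\), so \(p\) maps \(J_0(N)[n]\) onto \(J_0(N')[n]\). Therefore
\[
H_0(y)=H_0(p(x'))=nG(x')=G(nx')=0 .
\]
Hence \(\ker[n]\subseteq\ker H_0\) as finite group schemes; in characteristic zero, checking on \(\bar\Q\)-points suffices.

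\emph{Existence: dividing by \(n\).} Since \([n]:J_0(N')\to J_0(N')\) is an isogeny (a quotient map with kernel \(J_0(N')[n]\)), the universal property of quotients gives a unique \(H_1:J_0(N')\to A\) with \(H_0=H_1\circ[n]=nH_1\). This \(H_1\) is defined over \(\Q\) by uniqueness and Galois descent, or because the quotient construction commutes with base change.

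\emph{Checking \(G=H_1\circ p\).} We have
\[
n(H_1\circ p)=H_0\circ p=nG ,
\]
and \(\Hom_\Q(J_0(N),A)\) is torsion-free, so \(H_1\circ p=G\).

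The only genuinely substantive step is the lifting of \(n\)-torsion through \(p\). It rests on the divisibility of the connected kernel, which is exactly where Proposition~\ref{prop:connected-degeneracy-kernel} enters: if \(K_\ell\) had a nontrivial component group, \(n\)-torsion points of \(J_0(N')\) might fail to lift to \(n\)-torsion. The remaining steps are formal.
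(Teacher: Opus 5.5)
Your proof is correct, but it factors a different map than the paper does.

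The paper works on the source side. Since \(nG\) vanishes on \(K_\ell\), the restriction \(G|_{K_\ell}\) lands in the finite \'etale group \(A[n]\), so it is zero by geometric connectedness. The universal property of the quotient \(J_0(N)/K_\ell\simeq J_0(N')\) then gives \(G=H_1\circ(\delta_\ell)_*\) directly, and \(H_0=nH_1\) follows from surjectivity of \((\delta_\ell)_*\). Uniqueness also comes from surjectivity.

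You work on the target side. You use divisibility of the connected kernel to show that \((\delta_\ell)_*\) maps \(J_0(N)[n]\) onto \(J_0(N')[n]\). Hence \(H_0\) kills \(J_0(N')[n]\) and factors through \([n]\). You then need torsion-freeness of \(\Hom_\Q(J_0(N),A)\) to recover \(G=H_1\circ(\delta_\ell)_*\).

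The paper's route is somewhat more economical. It needs no lifting of \(\bar{\Q}\)-points, no appeal to \'etaleness of \(J_0(N')[n]\) to pass from points to group schemes, and no torsion-freeness. Your route, in exchange, makes the exact obstruction explicit. The cokernel of \(J_0(N)[n]\to J_0(N')[n]\) injects into \(K_\ell/nK_\ell\) by the snake lemma. That group vanishes because \(K_\ell\) is connected, and it is precisely what could fail if \(K_\ell\) had a component group of order not prime to \(n\).
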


\begin{proof}
Proposition \ref{prop:connected-degeneracy-kernel} shows that
\((\delta_\ell)_*\) is surjective with geometrically connected kernel.
Let \(K_\ell=\ker((\delta_\ell)_*)\). The equality
\(H_0\circ(\delta_\ell)_*=nG\) implies that the image of \(G|_{K_\ell}\) is
contained in the finite \'etale group scheme \(A[n]\). Since \(K_\ell\) is
geometrically connected,
the restriction \(G|_{K_\ell}\) is constant. Because it is a homomorphism,
it is zero.
By the universal property of the quotient
\(J_0(N)/K_\ell\simeq J_0(N')\)
\cite{Poonen}, \(G\) factors uniquely as
\(G=H_1\circ(\delta_\ell)_*\) for a homomorphism \(H_1:J_0(N')\to A\). Substitution and
the surjectivity of \((\delta_\ell)_*\) imply \(H_0=nH_1\).
\end{proof}

For a fixed prime \(p\), clearing denominators prime to \(p\) and
applying Lemma \ref{lem:connected-descent} proves that precomposition with
\((\delta_\ell)_*\) has saturated image over \(\Z_{(p)}\).

\begin{corollary}\label{cor:connected-local}
With the notation of Proposition \ref{prop:connected-degeneracy-kernel}, let
\(A/\Q\) be an abelian variety. Precomposition with \((\delta_\ell)_*\) has
saturated image after localization at any prime \(p\). More precisely, let
\(H\in\Hom_\Q(J_0(N'),A)\otimes_\Z\Q\). If
\(H\circ(\delta_\ell)_*\in
\Hom_\Q(J_0(N),A)\otimes_\Z\Z_{(p)}\),
then \(H\in\Hom_\Q(J_0(N'),A)\otimes_\Z\Z_{(p)}\).
\end{corollary}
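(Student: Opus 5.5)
The plan is to reduce the localized statement to Lemma~\ref{lem:connected-descent} by clearing denominators twice: once to make \(H\) integral, and once, with an integer prime to \(p\), to make \(H\circ(\delta_\ell)_*\) integral.

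First choose a positive integer \(m\) with \(mH\in\Hom_\Q(J_0(N'),A)\). Write \(m=p^k m'\) with \(p\nmid m'\). Next, the hypothesis \(H\circ(\delta_\ell)_*\in\Hom_\Q(J_0(N),A)\otimes_\Z\Z_{(p)}\) gives an integer \(t\) prime to \(p\) and some \(G\in\Hom_\Q(J_0(N),A)\) with \(t\,H\circ(\delta_\ell)_*=G\) in the rational Hom space. Here we use that \(\Hom_\Q(J_0(N),A)\) is a finitely generated free abelian group, so it embeds into its tensor products with \(\Z_{(p)}\) and with \(\Q\).

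Now put \(H_0=mtH\in\Hom_\Q(J_0(N'),A)\). Then \(H_0\circ(\delta_\ell)_*=mG\) holds as an identity in \(\Hom_\Q(J_0(N),A)\), and this is an integral statement because the Hom group is torsion-free. Lemma~\ref{lem:connected-descent}, applied with \(n=m\), produces \(H_1\in\Hom_\Q(J_0(N'),A)\) with \(H_0=mH_1\). Torsion-freeness again lets us cancel \(m\), giving \(tH=H_1\), so \(H=t^{-1}H_1\) with \(t\) a unit in \(\Z_{(p)}\). This proves \(H\in\Hom_\Q(J_0(N'),A)\otimes_\Z\Z_{(p)}\).

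The first sentence of the corollary follows as well. Saturation of the image of \(\Hom_\Q(J_0(N'),A)\otimes\Z_{(p)}\) inside \(\Hom_\Q(J_0(N),A)\otimes\Z_{(p)}\) is exactly the statement that any rational \(H\) whose composite with \((\delta_\ell)_*\) is \(p\)-integral is itself \(p\)-integral. Injectivity of precomposition, which is needed to identify the two sides, holds because \((\delta_\ell)_*\) is surjective by Proposition~\ref{prop:connected-degeneracy-kernel}.

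There is no real obstacle. The only point needing care is making the passage between rational and integral identities precise via torsion-freeness, and the remark that Lemma~\ref{lem:connected-descent} is applied to \(n=m\) rather than to a \(p\)-power. Either choice works, since that lemma has no restriction on \(n\).
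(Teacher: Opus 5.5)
Your proposal is correct and follows essentially the same route as the paper: clear denominators with an integer \(t\) prime to \(p\) and a positive integer (your \(m\)), apply Lemma~\ref{lem:connected-descent} to the identity \((mtH)\circ(\delta_\ell)_*=m\bigl(t(H\circ(\delta_\ell)_*)\bigr)\), and then cancel \(m\) using torsion-freeness of Hom groups. The factorization \(m=p^km'\) is never used and can be dropped.
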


\begin{proof}
Choose an integer \(t\) prime to \(p\) such that
\(t(H\circ(\delta_\ell)_*)\) is integral, and then choose \(n\geq1\) such that
\(ntH\) is integral. Then
\[
        (ntH)\circ(\delta_\ell)_*
        =n\bigl(t(H\circ(\delta_\ell)_*)\bigr).
\]
By Lemma \ref{lem:connected-descent}, there is
\(H_1\in\Hom_\Q(J_0(N'),A)\) such that \(ntH=nH_1\). Hom groups of abelian
varieties are torsion-free, since the image of a torsion homomorphism would be
both connected and contained in a finite \'etale group scheme. Hence
\(tH=H_1\). The integer \(t\) is a unit in \(\Z_{(p)}\), so
\(H\in\Hom_\Q(J_0(N'),A)\otimes_\Z\Z_{(p)}\).
\end{proof}

\subsection{The denominator theorem}

The two-cusp integrality criterion of Lemma \ref{lem:two-cusp}, the
prime-to-\(\ell\) estimate of Proposition \ref{prop:remove}, and the localized
saturation statement of Corollary \ref{cor:connected-local} together establish
the denominator bound in Theorem \ref{thm:denominator}.

\begin{theorem}\label{thm:denominator}
Let \(D_{u\circ\pi_E}(R)\) be defined by \eqref{eq:D-factor}. If
\[
        G=\sum_{r\mid R}x_r\Phi_{r,u}\in\Hom_\Q(J_0(N),E')
\]
is the expansion of \(G\) in the \(\Q\)-basis of Theorem
\ref{thm:rational-old-basis}, then
\begin{equation}\label{eq:global-denominator}
        D_{u\circ\pi_E}(R)x_r\in\Z
        \qquad(r\mid R).
\end{equation}
Equivalently,
\(D_{u\circ\pi_E}(R)\Hom_\Q(J_0(N),E')\subseteq
\bigoplus_{r\mid R}\Z\Phi_{r,u}\).
\end{theorem}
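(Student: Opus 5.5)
We argue one prime at a time. It suffices to show, for each prime \(\ell\), that \(\ell^{a\max\{1,\ord_\ell(R)\}}x_r\in\Z_{(\ell)}\) for all \(r\mid R\), where \(a=\ord_\ell(c_{u\circ\pi_E})\). Primes \(\ell\) with \(a=0\) must also be handled, and the claim for them is that \(x_r\in\Z_{(\ell)}\). The case \(R=1\) is immediate: there \(G\) is an integral multiple of \(u\circ\Phi_E\) by Lemma \ref{lem:optimal-primitivity}. So assume \(R>1\). We induct on \(k=\ord_\ell(R)\), proving for every level \(N\) a multiple of \(M\) with \(\ord_\ell(N/M)=k\) the local bound \(\ell^{a\max\{1,k\}}x_r\in\Z_{(\ell)}\).

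\emph{Base cases \(k\le1\).} For \(\ell\nmid r\), Proposition \ref{prop:remove} gives \(\ell^a x_r\in\Z_{(\ell)}\). When \(k=0\) every divisor is prime to \(\ell\), so this already settles the case. When \(k=1\), write the divisors with \(\ell\mid r\) as \(r=\ell r'\) with \(\ell\nmid r'\). Then \(\ell\nmid R/r\), so \(\gcd(r,R/r)\) is prime to \(\ell\). Clearing prime-to-\(\ell\) denominators and applying Lemma \ref{lem:two-cusp} to \(c_{u\circ\pi_E}\sum x_r\eta_r\), exactly as in Proposition \ref{prop:remove}, gives \(\ell^a x_r\in\Z_{(\ell)}\) for these \(r\) as well. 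In general, Lemma \ref{lem:two-cusp} controls every index \(r\) with \(\ell\nmid\gcd(r,R/r)\). The problematic indices are therefore those with \(\ell\mid r\) and \(\ell\mid R/r\), which occur only when \(k\ge2\).

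\emph{Inductive step \(k\ge2\).} Then \(\ell^2\mid N\), since \(\ell\mid R\) and \(\ell\mid M\) or \(\ell^2\mid R\); in either case \(\ell^2\mid N\) holds because \(k\ge2\). Put \(N'=N/\ell\) and \(\delta_\ell=\iota_{\ell,N,N'}\). The degeneracy relation \(\iota_{r',N',M}\circ\delta_\ell=\iota_{\ell r',N,M}\) gives \(\Phi_{\ell r',u,N,M}=\Phi_{r',u,N',M}\circ(\delta_\ell)_*\). Set \(G_0=\sum_{\ell\mid r}x_r\Phi_{r,u}\) and \(G_1=\sum_{\ell\nmid r}x_r\Phi_{r,u}\). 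By the base-case argument, \(\ell^aG_1\) lies in \(\Hom\otimes\Z_{(\ell)}\), and hence so does
\[
\ell^aG_0=\ell^aG-\ell^aG_1=H\circ(\delta_\ell)_*,\qquad H=\ell^a\sum_{r'\mid R/\ell}x_{\ell r'}\Phi_{r',u,N',M}.
\]
Corollary \ref{cor:connected-local}, which rests on the connected kernel of Proposition \ref{prop:connected-degeneracy-kernel}, then puts \(H\) in \(\Hom_\Q(J_0(N'),E')\otimes\Z_{(\ell)}\). Apply the induction hypothesis at level \(N'\), where \(\ord_\ell(R/\ell)=k-1\), to a prime-to-\(\ell\) integral multiple of \(H\). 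This gives \(\ell^{a\max\{1,k-1\}}\ell^a x_{\ell r'}\in\Z_{(\ell)}\). For \(k\ge2\) we have \(\max\{1,k-1\}+1=k\), so \(\ell^{ak}x_{\ell r'}\in\Z_{(\ell)}\). Together with \(\ell^a x_r\in\Z_{(\ell)}\) for \(\ell\nmid r\), this yields the claim at level \(N\).

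Combining the local statements over all primes gives \(D_{u\circ\pi_E}(R)x_r\in\Z_{(\ell)}\) for every \(\ell\), hence \(D_{u\circ\pi_E}(R)x_r\in\Z\). The equivalent lattice formulation is then a restatement. The step that needs care is the inductive step. There one must check that the old bases at levels \(N\) and \(N'\) correspond under precomposition with \((\delta_\ell)_*\). This correspondence uses the composition rule in Definition \ref{def:degeneracy} and the fact that \(u\) and \(E'\) are unchanged, so that the localized induction hypothesis applies to \(H\) rather than to an integral element. The passage from \(H\) to an integral element is handled by multiplying by a unit of \(\Z_{(\ell)}\).
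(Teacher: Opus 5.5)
Your proposal is correct and follows the paper's proof essentially step for step. You use the same prime-by-prime induction on $\ord_\ell(R)$, with Lemma~\ref{lem:two-cusp} and Proposition~\ref{prop:remove} controlling the indices where $\ell\nmid\gcd(r,R/r)$, and Corollary~\ref{cor:connected-local} descending the $\ell$-divisible indices to level $N/\ell$. The one cosmetic difference is that for $k\le1$ the paper applies Lemma~\ref{lem:two-cusp} to all indices at once instead of first splitting off those with $\ell\nmid r$, and your garbled justification of $\ell^2\mid N$ is harmless, since $\ell^2\mid R\mid N$ directly.
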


\begin{proof}
The case \(R=1\) is Lemma \ref{lem:optimal-primitivity} applied with
\(E'\) and \(u\), because then
\(\Phi_{1,u}=u\circ\Phi_E\) generates \(\Hom_\Q(J_0(M),E')\). Hence
\(x_1\in\Z\).
Assume \(R>1\) and fix a prime \(\ell\). Put
\(a=\ord_\ell(c_{u\circ\pi_E})\) and \(m=\ord_\ell(R)\). We prove
\begin{equation}\label{eq:local-denominator}
        \ell^{a\max\{1,m\}}x_r\in\Z_{(\ell)}
        \qquad(r\mid R)
\end{equation}
by induction on \(m\).

If \(m\le1\), Lemma \ref{lem:two-cusp}, applied to the pullback
\[
        (G\circ j_N)^*\omega_{E'}
        =
        c_{u\circ\pi_E}\sum_{r\mid R}x_r\eta_r ,
\]
implies \(\gcd(r,R/r)c_{u\circ\pi_E}x_r\in\Z\) for every \(r\mid R\). When
\(m\le1\), both \(\gcd(r,R/r)\) and
\(c_{u\circ\pi_E}/\ell^a\) are units in \(\Z_{(\ell)}\). It follows that
\(\ell^ax_r\in\Z_{(\ell)}\), which is \eqref{eq:local-denominator} in this
case.

Assume that \(m\ge2\). Proposition \ref{prop:remove} implies
\(\ell^ax_r\in\Z_{(\ell)}\) whenever \(\ell\nmid r\).
Subtracting the terms with \(\ell\nmid r\) from \(\ell^aG\)
leaves only those indexed by multiples of \(\ell\), which factor through
\(J_0(N/\ell)\). Set
\[
        G_1=
        \ell^aG-
        \sum_{\substack{r\mid R\\ \ell\nmid r}}\ell^ax_r\Phi_{r,u,N,M}
        \in\Hom_\Q(J_0(N),E')\otimes_\Z\Z_{(\ell)}.
\]
Put \(N'=N/\ell\), \(R'=R/\ell\),
\(\delta_\ell=\iota_{\ell,N,N'}\).
For each \(e\mid R'\), the degeneracy maps compose to give
\[
        \Phi_{\ell e,u,N,M}
        =
        \Phi_{e,u,N',M}\circ(\delta_\ell)_*.
\]
Equivalently, for each \(e\mid R'\), the diagram
\[
\begin{tikzcd}[column sep=large,row sep=large]
J_0(N) \arrow[r,"(\delta_\ell)_*"] \arrow[dr,swap,"\Phi_{\ell e,u,N,M}"] &
J_0(N') \arrow[d,"\Phi_{e,u,N',M}"] \\
& E'
\end{tikzcd}
\]
commutes.
Collecting the terms with index \(\ell e\), we obtain
\[
        G_1=H\circ(\delta_\ell)_*,
        \qquad
        H=\sum_{e\mid R'}\ell^ax_{\ell e}\Phi_{e,u,N',M}
        \quad\text{in }\Hom_\Q(J_0(N'),E')\otimes_\Z\Q.
\]
Corollary \ref{cor:connected-local} shows that
\(H\in\Hom_\Q(J_0(N'),E')\otimes_\Z\Z_{(\ell)}\).
Choose an integer \(t\) prime to \(\ell\) such that
\(tH\in\Hom_\Q(J_0(N'),E')\). Apply the induction hypothesis at level \(N'\)
to \(tH\). Since \(t\) is a unit in \(\Z_{(\ell)}\) and
\(\ord_\ell(R')=m-1\), we have
\[
        \ell^{a(m-1)}\bigl(\ell^ax_{\ell e}\bigr)
        =\ell^{am}x_{\ell e}\in\Z_{(\ell)}.
\]
For \(\ell\nmid r\), Proposition \ref{prop:remove} already proves
\(\ell^ax_r\in\Z_{(\ell)}\), which is stronger than the required bound because
\(m\ge2\). This completes the induction and proves
\eqref{eq:local-denominator}. Combining these local statements over all
primes \(\ell\) proves \eqref{eq:global-denominator}.
\end{proof}

If \(c_{u\circ\pi_E}=1\), then \(D_{u\circ\pi_E}(R)=1\), and
\eqref{eq:global-denominator} implies that every homomorphism has integral
coordinates in the rational old basis.

\begin{corollary}\label{cor:saturation}
If \(c_{u\circ\pi_E}=1\), then the old maps \(\Phi_{r,u}\), for \(r\mid R\),
form a \(\Z\)-basis of \(\Hom_\Q(J_0(N),E')\).
\end{corollary}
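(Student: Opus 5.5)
The plan is to deduce the statement directly from Theorem~\ref{thm:denominator}, combined with the rational basis of Theorem~\ref{thm:rational-old-basis}.

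First I show that \(D_{u\circ\pi_E}(R)=1\) under the hypothesis \(c_{u\circ\pi_E}=1\). When \(R=1\) this is the definition. When \(R>1\), every exponent \(\ord_\ell(c_{u\circ\pi_E})\max\{1,\ord_\ell(R)\}\) in \eqref{eq:D-factor} vanishes, since \(\ord_\ell(1)=0\) for every prime \(\ell\). So the product is empty and equals \(1\).

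Next I take an arbitrary \(G\in\Hom_\Q(J_0(N),E')\) and write its expansion \(G=\sum_{r\mid R}x_r\Phi_{r,u}\) with \(x_r\in\Q\). Theorem~\ref{thm:rational-old-basis} guarantees that this expansion exists and is unique, because the \(\Phi_{r,u}\) form a \(\Q\)-basis of \(\Hom_\Q(J_0(N),E')\otimes_\Z\Q\). Since \(\Hom_\Q(J_0(N),E')\) is torsion-free, it embeds in this tensor product, so the expansion is meaningful. Theorem~\ref{thm:denominator} with \(D_{u\circ\pi_E}(R)=1\) then gives \(x_r\in\Z\) for all \(r\mid R\). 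Hence \(\Hom_\Q(J_0(N),E')\subseteq\bigoplus_{r\mid R}\Z\Phi_{r,u}\).

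For the reverse inclusion, each \(\Phi_{r,u}=u\circ\Phi_E\circ\iota_{r,*}\) is a composite of homomorphisms defined over \(\Q\), so it lies in \(\Hom_\Q(J_0(N),E')\). The \(\Phi_{r,u}\) therefore generate \(\Hom_\Q(J_0(N),E')\) as a \(\Z\)-module. They are \(\Z\)-linearly independent because they are \(\Q\)-linearly independent in the tensor product, by Theorem~\ref{thm:rational-old-basis}. Together these facts show that the \(\Phi_{r,u}\) form a \(\Z\)-basis.

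All the real work, the two-cusp estimate and the connected-kernel descent, is already contained in Theorem~\ref{thm:denominator}. The only point that needs care is the torsion-freeness that identifies integral coordinates with membership in the lattice, and the proof of Corollary~\ref{cor:connected-local} already records this.
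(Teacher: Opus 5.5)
Your proposal is correct and follows the paper's own argument: the \(\Q\)-basis comes from Theorem~\ref{thm:rational-old-basis}, and Theorem~\ref{thm:denominator} with \(D_{u\circ\pi_E}(R)=1\) makes every coordinate integral. Your extra remarks are fine but routine: the computation of \(D_{u\circ\pi_E}(R)\), the reverse inclusion, and \(\Z\)-independence coming from \(\Q\)-independence.
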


\begin{proof}
They form a \(\Q\)-basis by Theorem \ref{thm:rational-old-basis}. When
\(c_{u\circ\pi_E}=1\),
Theorem \ref{thm:denominator} implies that every element of
\(\Hom_\Q(J_0(N),E')\) has integral coefficients in this basis.
\end{proof}

For every \(r\mid R\), one has
\(\Phi_{r,u}=u\circ\Phi_{r,\mathrm{id}_E}\). Hence the integral expansion in
Corollary \ref{cor:saturation} factors every homomorphism through \(u\).

\begin{corollary}\label{cor:factor-through-u}
Assume that \(c_{u\circ\pi_E}=1\). Then every
\(H\in\Hom_\Q(J_0(N),E')\) factors uniquely through \(u:E\to E'\). More precisely,
there is a unique
\(H_E\in \bigoplus_{r\mid R}\Z\,\Phi_{r,\mathrm{id}_E}\) such that
\(H=u\circ H_E\). Consequently, for every morphism
\(g:X_0(N)\to E'\) over \(\Q\), the pointed translate
\(t_{-g(\infty)}\circ g\) factors as \(u\circ h\) for a morphism
\(h:X_0(N)\to E\) over \(\Q\).
\end{corollary}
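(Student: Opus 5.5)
The plan is to deduce the statement directly from Corollary~\ref{cor:saturation} and the identity \(\Phi_{r,u}=u\circ\Phi_{r,\mathrm{id}_E}\). First, given \(H\in\Hom_\Q(J_0(N),E')\), Corollary~\ref{cor:saturation} (which applies since \(c_{u\circ\pi_E}=1\)) provides integers \(b_r\) with \(H=\sum_{r\mid R}b_r\Phi_{r,u}\). I then set \(H_E=\sum_{r\mid R}b_r\Phi_{r,\mathrm{id}_E}\), which is an honest element of \(\Hom_\Q(J_0(N),E)\) because the \(b_r\) are integers. Composition with \(u\) is additive, so \(u\circ H_E=\sum b_r\,u\circ\Phi_{r,\mathrm{id}_E}=\sum b_r\Phi_{r,u}=H\). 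This settles existence.

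For uniqueness I use that \(u\) is a nonzero isogeny. If \(u\circ H_E=u\circ H_E'\), then \(u\circ(H_E-H_E')=0\), so the image of \(H_E-H_E'\) lies in the finite group scheme \(\ker u\). Since \(J_0(N)\) is connected, the image of \(H_E-H_E'\) is connected, hence trivial, and \(H_E=H_E'\) as homomorphisms. Alternatively, I can apply \(\widehat u\) to get \([\deg u](H_E-H_E')=0\) and use torsion-freeness of Hom groups, as in the proof of Corollary~\ref{cor:connected-local}. Uniqueness inside the lattice \(\bigoplus\Z\Phi_{r,\mathrm{id}_E}\) also follows from the \(\Q\)-linear independence in Theorem~\ref{thm:rational-old-basis}, applied with \(E'=E\) and \(u=\mathrm{id}_E\).

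For the curve statement, let \(g:X_0(N)\to E'\) and let \(G\) be the associated homomorphism based at \(\infty\), so that \(t_{-g(\infty)}\circ g=G\circ j_N\) by \eqref{eq:associated-jacobian-homomorphism}. Writing \(G=u\circ G_E\) as above and putting \(h=G_E\circ j_N\), I obtain \(t_{-g(\infty)}\circ g=u\circ h\). Here \(h\) is defined over \(\Q\) and satisfies \(h(\infty)=0\). No step is a serious obstacle: the only substantive input is the integrality from Corollary~\ref{cor:saturation}, and the remaining care is to check that \(\Q\)-rational coefficients are not needed once the \(b_r\) are known to be integers.
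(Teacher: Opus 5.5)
Your proposal is correct and follows the paper's proof essentially verbatim. Existence comes from the integral expansion in Corollary~\ref{cor:saturation} together with the identity \(\Phi_{r,u}=u\circ\Phi_{r,\mathrm{id}_E}\). Uniqueness comes from composing with \(\widehat u\) and using torsion-freeness of \(\Hom_\Q(J_0(N),E)\); your connected-image argument is an equivalent alternative. The curve statement then follows from the universal property of the Jacobian.
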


\begin{proof}
By Corollary \ref{cor:saturation}, write
\(H=\sum_{r\mid R}n_r\Phi_{r,u}\) with \(n_r\in\Z\). Since
\(\Phi_{r,u}=u\circ\Phi_{r,\mathrm{id}_E}\), the homomorphism
\(H_E=\sum_{r\mid R}n_r\Phi_{r,\mathrm{id}_E}\) satisfies \(H=u\circ H_E\). If
\(u\circ H_E=u\circ H'_E\), then
\([\deg u]\circ(H_E-H'_E)=0\) after composing with the dual isogeny
\(\widehat u:E'\to E\). The torsion-freeness of
\(\Hom_\Q(J_0(N),E)\) implies \(H_E=H'_E\). The assertion for pointed morphisms
follows from the universal
property of the Jacobian \eqref{eq:jacobian-universal-property}.
\end{proof}

\begin{remark}
The hypothesis \(c_{u\circ\pi_E}=1\) in Corollary
\ref{cor:factor-through-u} cannot in general be omitted. Gonz\'alez identifies
the elliptic quotient \(X_0(95)/\langle w_{95}\rangle\) with the curve denoted
\(19\mathrm{A}3\) in Cremona's tables, namely the curve \(E'\) with LMFDB label
\(\href{https://www.lmfdb.org/EllipticCurve/Q/19/a/3}{19.\mathrm{a}3}\), where
\(w_{95}\) is the Fricke involution
\cite[Table~2]{GonzalezBielliptic}. Hence the quotient map
\(g:X_0(95)\to E'\) has degree \(2\). On the other hand, the strong Weil curve
\(E\) in this isogeny class has LMFDB label
\(\href{https://www.lmfdb.org/EllipticCurve/Q/19/a/2}{19.\mathrm{a}2}\), and
Cremona's tables contain a degree-\(3\) isogeny \(u:E\to E'\)
\cite[Chapter~IV, Table~1]{CremonaAlgorithms}. Since \(3\nmid2\), no translate
of \(g\) can factor as \(u\circ h\) with \(h:X_0(95)\to E\). In particular,
\(c_{u\circ\pi_E}\ne1\) in this example.
\end{remark}

\subsection{Consequences and the main divisibility theorem}

We first establish the integral-basis theorem.

\begin{proof}[Proof of Theorem \ref{thm:lattice-main}]
The assertion follows from Corollary \ref{cor:saturation}.
\end{proof}

We next turn to Theorem \ref{thm:main}, beginning with a
fixed-target divisibility statement.

\begin{proposition}\label{prop:fixed-target-degree}
Every nonconstant morphism \(g:X_0(N)\to E'\) over \(\Q\) satisfies
\[
        \deg(u\circ\pi_E)\mid D_{u\circ\pi_E}(R)\deg g .
\]
\end{proposition}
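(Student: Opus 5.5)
Let \(G\in\Hom_\Q(J_0(N),E')\) be the homomorphism associated with \(g\), based at \(\infty\), as in Lemma \ref{lem:curve-jacobian}, so that \(\deg g=\langle G,G\rangle\). Put \(D=D_{u\circ\pi_E}(R)\). By Theorem \ref{thm:denominator}, \(DG=\sum_{r\mid R}b_r\Phi_{r,u}\) with \(b_r\in\Z\). Thus \(G\) has an expansion of the form \eqref{eq:degree-old-basis-expansion} with \(\nu=D\). We do not reduce this to lowest terms, since only the identity in \(\Hom_\Q(J_0(N),E')\otimes_\Z\Q\) is needed.

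Proposition \ref{prop:degree} then gives
\[
        \deg g=\deg\pi_E\cdot\deg u\cdot\frac{\boldsymbol b^TA_{E,N}\boldsymbol b}{D^2}.
\]
By Lemma \ref{lem:fixed-target-manin}, \(\deg(u\circ\pi_E)=\deg\pi_E\deg u\). Writing \(Q=\boldsymbol b^TA_{E,N}\boldsymbol b\in\Z\), we get
\[
        D\deg g=\deg(u\circ\pi_E)\cdot\frac{Q}{D}.
\]
So it suffices to show \(D\mid Q\). This is the content of Lemma \ref{lem:first-n}, applied with \(\nu=D\) and the integral vector \(\boldsymbol b\). That lemma uses the primitivity of \(u\circ\Phi_E\) from Lemma \ref{lem:optimal-primitivity}, for which the standing hypothesis that \(u\) generates \(\Hom_\Q(E,E')\) is needed. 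It gives that every coordinate of \(A_{E,N}\boldsymbol b\) is divisible by \(D\), hence \(D\mid Q\). Consequently \(D\deg g=\deg(u\circ\pi_E)\cdot(Q/D)\) with \(Q/D\in\Z\), which is the claimed divisibility.

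The main point to check is that Lemma \ref{lem:first-n} does not require \(\nu\) to be the reduced denominator. Its proof only uses the displayed identity \(G=\nu^{-1}\sum b_r\Phi_{r,u}\) together with the integrality of \(G\circ\iota_s^*\), so it applies verbatim with \(\nu=D\). No further obstacle is expected: the nontrivial input, the denominator bound, is already Theorem \ref{thm:denominator}. The case \(R=1\) is included, since then \(D=1\) and the statement reduces to \(\deg(u\circ\pi_E)\mid\deg g\), which the same computation gives.
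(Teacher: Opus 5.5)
Your proof is correct and uses the same ingredients as the paper's proof: Theorem~\ref{thm:denominator}, Lemma~\ref{lem:first-n}, and the degree formula \eqref{eq:degree-formula-old}. The only difference is that you take the denominator to be \(D_{u\circ\pi_E}(R)\) itself, whereas the paper uses the reduced denominator \(\nu\) and first derives \(\nu\mid D_{u\circ\pi_E}(R)\). This is harmless, since neither Lemma~\ref{lem:first-n} nor Proposition~\ref{prop:degree} requires the expansion to be reduced, and both routes produce the same integer quotient \(D_{u\circ\pi_E}(R)\deg g/\deg(u\circ\pi_E)\).
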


\begin{proof}
Translate \(g\) by \(-g(\infty)\). Translation does not change the degree, so we
may assume that \(g(\infty)=0\). By Lemma \ref{lem:curve-jacobian}, the map \(g\)
corresponds to a homomorphism \(G:J_0(N)\to E'\). By Theorem
\ref{thm:rational-old-basis}, write
\(G=\nu^{-1}\sum_{r\mid R}b_r\Phi_{r,u}\) with \(b_r\in\Z\),
\(\nu\in\Z_{>0}\), and
\(\gcd(\nu,\{b_r:r\mid R\})=1\).
Put \(\boldsymbol b=(b_r)_{r\mid R}\).
By Theorem \ref{thm:denominator}, applied to
the coordinates \(x_r=b_r/\nu\), one has
\(D_{u\circ\pi_E}(R)b_r/\nu\in\Z\) for every \(r\mid R\). The coprimality
condition forces
\begin{equation}\label{eq:nu-divides-Df}
        \nu\mid D_{u\circ\pi_E}(R).
\end{equation}
Moreover, by Lemma \ref{lem:first-n},
\begin{equation}\label{eq:nu-divides-quadratic}
        \nu\mid\boldsymbol{b}^{T}A_{E,N}\boldsymbol{b}.
\end{equation}
By Lemma \ref{lem:fixed-target-manin}, \(\deg(u\circ\pi_E)=\deg\pi_E\deg u\).
The degree formula \eqref{eq:degree-formula-old} is therefore
\[
        \frac{D_{u\circ\pi_E}(R)\deg g}{\deg(u\circ\pi_E)}
        =
        D_{u\circ\pi_E}(R)
        \frac{\boldsymbol{b}^{T}A_{E,N}\boldsymbol{b}}{\nu^2}.
\]
The right-hand side is an integer by \eqref{eq:nu-divides-Df} and
\eqref{eq:nu-divides-quadratic}. Hence \(\deg(u\circ\pi_E)\mid D_{u\circ\pi_E}(R)\deg g\).
\end{proof}

When \(c_{u\circ\pi_E}=1\), the integral old basis sharpens
Proposition \ref{prop:fixed-target-degree} to an exact description of the
degrees of pointed morphisms.

\begin{corollary}\label{cor:fixed-target-degree-form}
With the notation of Proposition \ref{prop:fixed-target-degree}, let
\(A_{E,N}\) be the old degree matrix defined by
\eqref{eq:old-degree-matrix-entry}. Suppose that \(c_{u\circ\pi_E}=1\).
For every nonconstant morphism \(g:X_0(N)\to E'\), its associated
homomorphism \(G_g\) based at \(\infty\) has a unique nonzero coefficient
vector \(\boldsymbol b=(b_r)_{r\mid R}\in\Z^{\{r:\,r\mid R\}}\) satisfying
\begin{equation}\label{eq:fixed-target-exact-degree}
        G_g=\sum_{r\mid R}b_r\Phi_{r,u},
        \qquad
        \deg g=\deg(u\circ\pi_E)\,
        \boldsymbol b^{T}A_{E,N}\boldsymbol b.
\end{equation}
Conversely, every nonzero
\(\boldsymbol b\in\Z^{\{r:\,r\mid R\}}\) defines a nonconstant pointed morphism
\[
        g_{\boldsymbol b}
        =
        \left(\sum_{r\mid R}b_r\Phi_{r,u}\right)\circ j_N,
\]
whose degree is given by \eqref{eq:fixed-target-exact-degree} with
\(g=g_{\boldsymbol b}\).
\end{corollary}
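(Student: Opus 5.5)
The plan is to combine Corollary~\ref{cor:saturation} with the degree formula of Proposition~\ref{prop:degree}, taking \(\nu=1\) throughout.

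\emph{Forward direction.} Let \(g:X_0(N)\to E'\) be nonconstant, and let \(G_g\) be its associated homomorphism based at \(\infty\). By \eqref{eq:associated-jacobian-homomorphism}, \(G_g\) lies in \(\Hom_\Q(J_0(N),E')\).
\begin{itemize}
\item Since \(c_{u\circ\pi_E}=1\), Corollary~\ref{cor:saturation} writes \(G_g=\sum_{r\mid R}b_r\Phi_{r,u}\) with all \(b_r\in\Z\).
\item The vector \(\boldsymbol b\) is unique because the \(\Phi_{r,u}\) are \(\Q\)-linearly independent (Theorem~\ref{thm:rational-old-basis}).
\item The vector \(\boldsymbol b\) is nonzero: if \(\boldsymbol b=0\), then \(G_g=0\), so \(t_{-g(\infty)}\circ g=G_g\circ j_N\) would be constant, contradicting that \(g\) is nonconstant.
\item Apply Proposition~\ref{prop:degree} with \(\nu=1\). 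It gives \(\deg g=\deg\pi_E\deg u\cdot\boldsymbol b^TA_{E,N}\boldsymbol b\).
\item Lemma~\ref{lem:fixed-target-manin} gives \(\deg\pi_E\deg u=\deg(u\circ\pi_E)\), which yields \eqref{eq:fixed-target-exact-degree}.
\end{itemize}

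\emph{Converse direction.} Let \(\boldsymbol b\neq0\) be integral.
\begin{itemize}
\item Then \(G=\sum_r b_r\Phi_{r,u}\in\Hom_\Q(J_0(N),E')\), and \(G\neq0\) by linear independence (Theorem~\ref{thm:rational-old-basis}).
\item Set \(g_{\boldsymbol b}=G\circ j_N\). It is pointed, with \(g_{\boldsymbol b}(\infty)=0\).
\item It is nonconstant. By the universal property \eqref{eq:jacobian-universal-property}, \(H\mapsto H\circ j_N\) is a bijection onto pointed maps. The zero homomorphism corresponds to the constant pointed map, so a nonzero \(G\) gives a nonconstant \(g_{\boldsymbol b}\).
\item Its associated homomorphism is \(G\) itself, because \(g_{\boldsymbol b}(\infty)=0\).
\item The degree formula therefore follows from the forward direction. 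Alternatively, compute it directly by bilinearity of the degree pairing using \eqref{eq:pairing-on-u-old-translates}, which gives \(\langle g_{\boldsymbol b},g_{\boldsymbol b}\rangle=\deg\pi_E\deg u\,\boldsymbol b^TA_{E,N}\boldsymbol b\).
\end{itemize}

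\emph{Main obstacle.} The argument is essentially bookkeeping. The only delicate point is nonconstancy in the converse: a nonzero homomorphism must give a nonconstant curve map. This holds because the image of \(j_N\) generates \(J_0(N)\), so a homomorphism vanishing on \(j_N(X_0(N))\) vanishes identically.

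As a consistency check, positive definiteness of \(A_{E,N}\) (Proposition~\ref{prop:degree-pairing-basic}) makes \(\boldsymbol b^TA_{E,N}\boldsymbol b>0\) for \(\boldsymbol b\neq0\). This agrees with \(\deg g_{\boldsymbol b}>0\).
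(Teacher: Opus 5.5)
Your proposal is correct and follows essentially the same route as the paper: the integral expansion comes from Corollary~\ref{cor:saturation}, the degree from \eqref{eq:degree-formula-old} with \(\nu=1\), and nonconstancy in the converse from the fact that \(j_N(X_0(N))\) generates \(J_0(N)\). You also make explicit two points the paper leaves implicit: that \(\boldsymbol b\neq0\) in the forward direction, and that \(\deg\pi_E\deg u=\deg(u\circ\pi_E)\) by Lemma~\ref{lem:fixed-target-manin}.
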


\begin{proof}
Corollary \ref{cor:saturation} provides the unique integral expansion of \(G_g\),
and \eqref{eq:degree-formula-old} computes its degree. Conversely, a
nonzero vector \(\boldsymbol b\) defines a nonzero homomorphism because the old
maps form a \(\Z\)-basis. Since the image of the Abel--Jacobi map
\(j_N\)
generates \(J_0(N)\), its composite with this homomorphism is nonconstant.
Equation \eqref{eq:degree-formula-old} computes its degree.
\end{proof}

To obtain the target-independent factor in Theorem \ref{thm:main}, we
compose with the dual isogeny \(E'\to E\) and apply the denominator theorem
with target \(E\).

\begin{proof}[Proof of Theorem \ref{thm:main}]
Let \(R=N/M\). Translate \(g\) by \(-g(\infty)\), and let
\(G:J_0(N)\to E'\) be the associated homomorphism. Choose a generator
\(u:E\to E'\) of \(\Hom_\Q(E,E')\), and let \(\widehat u:E'\to E\) be the dual
isogeny. By Theorem \ref{thm:rational-old-basis}, write
\(G=\nu^{-1}\sum_{r\mid R}b_r\Phi_{r,u}\) with \(b_r\in\Z\),
\(\nu\in\Z_{>0}\), and
\(\gcd(\nu,\{b_r:r\mid R\})=1\).
Put \(\boldsymbol b=(b_r)_{r\mid R}\).
Since \(\widehat u\circ u=[\deg u]\), for each
\(r\mid R\) one has
\(\widehat u\circ\Phi_{r,u}=[\deg u]\circ\Phi_{r,\mathrm{id}_E}\). Therefore
\(\widehat u\circ G=(\deg u/\nu)\sum_{r\mid R}b_r\Phi_{r,\mathrm{id}_E}\) in
\(\Hom_\Q(J_0(N),E)\otimes_\Z\Q\).

Apply Theorem \ref{thm:denominator} to the \(E\)-valued homomorphism
\(\widehat u\circ G\), using the target \(E\) and the fixed isogeny
\(\mathrm{id}_E:E\to E\).
Let \(D_E(R)\) denote the integer \eqref{eq:D-factor} in that case, so
\(D_E(1)=1\) and, for \(R>1\),
\[
        D_E(R)=
        \prod_{\ell}\ell^{\ord_\ell(c_E)\max\{1,\ord_\ell(R)\}}.
\]
The denominator theorem places \(D_E(R)(\deg u/\nu)b_r\) in \(\Z\) for every
\(r\mid R\). Since \(\nu\) is coprime to the \(b_r\)'s, these integrality
relations force
\begin{equation}\label{eq:nu-divides-DE-degu}
        \nu\mid D_E(R)\deg u.
\end{equation}
Moreover, by Lemma \ref{lem:first-n},
\begin{equation}\label{eq:nu-divides-quadratic-main}
        \nu\mid\boldsymbol{b}^{T}A_{E,N}\boldsymbol{b}.
\end{equation}
By the degree formula \eqref{eq:degree-formula-old},
\[
        \frac{D_E(R)\deg g}{\deg\pi_E}
        =
        \left(\frac{D_E(R)\deg u}{\nu}\right)
        \left(\frac{\boldsymbol{b}^{T}A_{E,N}\boldsymbol{b}}{\nu}\right).
\]
The right-hand side is an integer by \eqref{eq:nu-divides-DE-degu} and
\eqref{eq:nu-divides-quadratic-main}. Hence
\(\deg\pi_E\mid D_E(R)\deg g\).

For \(R=1\), \(D_E(R)=1=c_E^{\Omega(R)}\). For \(R>1\), equation
\eqref{eq:D-factor} shows that \(D_E(R)\mid c_E^{\Omega(R)}\). Hence
\(\deg\pi_E\mid c_E^{\Omega(R)}\deg g\). Its equivalent form is the elementary
identity \(a\mid cb\) if and only if
\(a/\gcd(a,c)\mid b\).
\end{proof}

\subsection{Mordell--Weil applications of the divisibility theorem}
\label{subsec:rank-consequences}

We apply the divisibility theorem, Theorem~\ref{thm:main}, to bound the
Mordell--Weil ranks of elliptic quotients of modular curves. Let \(E/\Q\) be
the strong Weil curve of conductor
\(M\), let \(\pi_E:X_0(M)\to E\) be its strong Weil parametrization, and let
\(c_E\) be its Manin constant. Let \(N\) be a positive multiple of \(M\), let
\(E'/\Q\) be \(\Q\)-isogenous to \(E\), and let
\(g:X_0(N)\to E'\) be a nonconstant morphism of degree \(d\). Write \(\ord_2\)
for the \(2\)-adic valuation. Theorem~\ref{thm:main} gives
\begin{equation}\label{eq:rank-input-divisibility}
                  \deg\pi_E\mid c_E^{\Omega(N/M)}d.
\end{equation}

Watkins's conjecture supplies the complementary divisibility.

\begin{conjecture}[Watkins \cite{Watkins}]\label{conj:watkins}
Every strong Weil curve \(E/\Q\) satisfies
\begin{equation}\label{eq:watkins-divisibility}
        2^{\operatorname{rank}E(\Q)}\mid\deg\pi_E.
\end{equation}
\end{conjecture}

Equations~\eqref{eq:rank-input-divisibility} and
\eqref{eq:watkins-divisibility} imply a rank bound with no parity restriction
on \(d\).

\begin{theorem}\label{thm:watkins-rank-bound}
Assume that the divisibility in Conjecture~\ref{conj:watkins} holds for
\(E\). For every positive multiple \(N\) of \(M\), every elliptic curve
\(E'/\Q\) that is \(\Q\)-isogenous to \(E\), and every nonconstant morphism
\(g:X_0(N)\to E'\) of degree \(d\), one has
\begin{equation}\label{eq:watkins-rank-bound}
 \operatorname{rank}E'(\Q)
 \leq \Omega(N/M)\ord_2(c_E)+\ord_2(d).
\end{equation}
If \(N=M\) or \(c_E\) is odd, the bound simplifies to
\(\operatorname{rank}E'(\Q)\leq\ord_2(d)\).
\end{theorem}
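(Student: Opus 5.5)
The bound follows by comparing \(2\)-adic valuations in the two divisibilities \eqref{eq:rank-input-divisibility} and \eqref{eq:watkins-divisibility}, after moving the rank from \(E'\) to \(E\) via isogeny invariance.

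First, since \(E'\) is \(\Q\)-isogenous to \(E\), I will use that \(\operatorname{rank}E'(\Q)=\operatorname{rank}E(\Q)\). An isogeny \(u:E\to E'\) and its dual \(\widehat u\) satisfy \(\widehat u\circ u=[\deg u]\), so both induce maps with finite kernel between the finitely generated groups \(E(\Q)\otimes\Q\) and \(E'(\Q)\otimes\Q\). Hence the ranks agree.

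Second, I combine the two divisibilities. The hypothesis that Conjecture~\ref{conj:watkins} holds for \(E\) gives
\[
\operatorname{rank}E(\Q)\le\ord_2(\deg\pi_E).
\]
Theorem~\ref{thm:main}, in the form \eqref{eq:rank-input-divisibility}, gives \(\deg\pi_E\mid c_E^{\Omega(N/M)}d\). Taking \(\ord_2\), which is monotone under divisibility of positive integers and additive on products, yields
\[
\ord_2(\deg\pi_E)\le\Omega(N/M)\ord_2(c_E)+\ord_2(d).
\]
Chaining these inequalities with the first step proves \eqref{eq:watkins-rank-bound}.

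For the simplification, if \(N=M\) then \(\Omega(1)=0\), and if \(c_E\) is odd then \(\ord_2(c_E)=0\). In either case the first term vanishes, leaving \(\operatorname{rank}E'(\Q)\le\ord_2(d)\).

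There is no real obstacle here, since all the substance lies in Theorem~\ref{thm:main} and the assumed conjecture. The only point needing care is that the conjecture is stated for the strong Weil curve \(E\), while the rank bound concerns \(E'\); this is handled by the isogeny invariance of rank in the first step.
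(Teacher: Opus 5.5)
Your proposal is correct and follows essentially the same route as the paper: isogeny invariance of rank, then \(2\)-adic valuations of \(2^{\operatorname{rank}E(\Q)}\mid\deg\pi_E\) and \(\deg\pi_E\mid c_E^{\Omega(N/M)}d\), chained together. Your justification of isogeny invariance via \(\widehat u\circ u=[\deg u]\) is a correct expansion of a step the paper simply cites.
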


\begin{proof}
Mordell--Weil rank is invariant under \(\Q\)-isogeny. Taking \(2\)-adic
valuations in \eqref{eq:rank-input-divisibility} and
\eqref{eq:watkins-divisibility} gives
\[
 \operatorname{rank}E'(\Q)
 =\operatorname{rank}E(\Q)
 \leq\ord_2(\deg\pi_E)
 \leq\Omega(N/M)\ord_2(c_E)+\ord_2(d).\qedhere
\]
\end{proof}

\begin{corollary}\label{cor:cremona-rank-bound}
Let \(E/\Q\) be a strong Weil curve of conductor \(M<400000\). For every
positive multiple \(N\) of \(M\), every elliptic curve \(E'/\Q\) that is
\(\Q\)-isogenous to \(E\), and every nonconstant morphism
\(g:X_0(N)\to E'\) of degree \(d\), one has
\(\operatorname{rank}E'(\Q)\leq\ord_2(d)\).
More explicitly, \(\operatorname{rank}E'(\Q)=0\) if \(2\nmid d\),
\(\operatorname{rank}E'(\Q)\leq1\) if \(4\nmid d\), and
\(\operatorname{rank}E'(\Q)\leq2\) if \(8\nmid d\).
\end{corollary}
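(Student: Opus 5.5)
The statement to prove is Corollary~\ref{cor:cremona-rank-bound}. The plan is to reduce it to Theorem~\ref{thm:watkins-rank-bound} in its simplified form. Two inputs are needed for the given strong Weil curve \(E\) of conductor \(M<400000\): Watkins's divisibility \eqref{eq:watkins-divisibility} must hold for \(E\), and \(c_E\) must be odd, or better equal to \(1\). Once both are in hand, the bound \eqref{eq:watkins-rank-bound} becomes \(\operatorname{rank}E'(\Q)\le\ord_2(d)\) for every multiple \(N\) of \(M\), every \(E'\) isogenous to \(E\), and every nonconstant \(g:X_0(N)\to E'\) of degree \(d\).

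The Manin constant input comes from Cremona's tables \cite{CremonaData}, already quoted in the introduction: they verify \(c_E=1\) for all strong Weil curves with \(M<400000\). Hence \(\ord_2(c_E)=0\), and the term \(\Omega(N/M)\ord_2(c_E)\) vanishes for every \(N\). In fact, with \(c_E=1\), Theorem~\ref{thm:main} already gives \(\deg\pi_E\mid d\), so \(\ord_2(\deg\pi_E)\le\ord_2(d)\) directly.

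The Watkins input is the step I expect to be the real obstacle, since Conjecture~\ref{conj:watkins} is open in general. For this finite range I would use a finite verification against the same database. For each isogeny class with \(M<400000\), the tables record the rank \(\operatorname{rank}E(\Q)\) and the modular degree \(\deg\pi_E\) of the optimal curve, so one can check \(2^{\operatorname{rank}E(\Q)}\mid\deg\pi_E\) class by class. I would cite this check, done via Cremona's data and the known verification of Watkins's conjecture in that range, rather than prove it. If only the first assertion (\(2\nmid d\Rightarrow\) rank \(0\)) were needed, one could avoid the database here. Positive rank forces the root number \(-1\), hence \(w_M\) acts as \(+1\) on \(f_E\). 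Then the parametrization factors through \(X_0(M)/w_M\) up to the appropriate translation, which forces \(2\mid\deg\pi_E\). This gives a uniform fallback argument, but the general statement still needs the tables for ranks \(2\) and \(3\).

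The explicit consequences then follow from \(\operatorname{rank}E'(\Q)=\operatorname{rank}E(\Q)\), which holds because rank is an isogeny invariant. If \(2\nmid d\), then \(\ord_2(d)=0\) and the rank is \(0\). If \(4\nmid d\), then \(\ord_2(d)\le1\). If \(8\nmid d\), then \(\ord_2(d)\le2\).
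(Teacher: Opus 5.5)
Your main argument is the same as the paper's: Cremona's tables supply both \(c_E=1\) and Watkins's divisibility for \(M<400000\), and Theorem~\ref{thm:watkins-rank-bound} then gives \(\operatorname{rank}E'(\Q)\le\ord_2(d)\). Your aside about a database-free fallback for odd \(d\) is wrong, because positive rank does not force root number \(-1\) (the rank-\(2\) curve 389a1 has root number \(+1\)); the \(w_M\)-invariance argument therefore says nothing about even positive ranks and cannot by itself prove the odd-degree assertion, which is harmless only because your proof never uses it.
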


\begin{proof}
Cremona's tables \cite{CremonaData} verify Watkins's conjecture
and \(c_E=1\) for every strong Weil curve of conductor less than
\(400000\). Theorem~\ref{thm:watkins-rank-bound} therefore applies with
\(c_E=1\).
\end{proof}

Ogg's inequality and Corollary~\ref{cor:cremona-rank-bound} imply the following
unconditional result for odd degrees at most \(1645\).

\begin{theorem}\label{thm:ogg-rank-bound}
For every positive odd integer \(d\leq1645\), no modular curve
\(X_0(N)/\Q\) admits a degree-\(d\) morphism over \(\Q\) to an elliptic curve
of positive \(\Q\)-rank.
\end{theorem}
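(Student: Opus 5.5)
Suppose \(g:X_0(N)\to E'\) is a morphism over \(\Q\) of odd degree \(d\le1645\) and \(\operatorname{rank}E'(\Q)>0\); we aim for a contradiction. By modularity, \(E'\) is \(\Q\)-isogenous to a strong Weil curve \(E\) of some conductor \(M\). Since \(J_0(N)\) surjects onto \(E'\), Theorem~\ref{thm:rational-old-basis} (or Faltings plus the Atkin--Lehner--Li decomposition) gives \(M\mid N\). Mordell--Weil rank is an isogeny invariant, so \(\operatorname{rank}E(\Q)>0\). The argument splits according to the size of \(M\).

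\emph{Case \(M<400000\).} Corollary~\ref{cor:cremona-rank-bound} gives \(\operatorname{rank}E'(\Q)\le\ord_2(d)=0\), because \(d\) is odd. This contradicts positive rank.

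\emph{Case \(M\ge400000\).} Here we use a lower bound on the modular degree and then transfer it to \(d\) via Theorem~\ref{thm:main}. The lower bound is Ogg's inequality, in the form bounding \(\deg\pi_E\) from below by a function of the conductor. Concretely, the inequality comparing genus and gonality through \(\#X_0(M)(\mathbb F_{p^2})\) shows that any nonconstant map \(X_0(M)\to E\) has degree at least roughly \((p^2-1)\psi(M)/(12(p^2+1))\) for a small prime \(p\nmid M\), with \(p=2\) or \(3\). For \(M\ge400000\) this forces \(\deg\pi_E>1645\); the threshold \(1645\) is presumably the worst case of this estimate at the smallest admissible \(M\). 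To transfer the bound to \(d\), Theorem~\ref{thm:main} gives \(\deg\pi_E\mid c_E^{\Omega(N/M)}d\). If \(c_E=1\), then \(\deg\pi_E\le d\le1645\), a contradiction.

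\emph{Main obstacle.} Without the tables, \(c_E=1\) is not known in general. The known result is that \(c_E\) is divisible by no odd prime, from \v{C}esnavi\v{c}iusand related work; more precisely \(c_E\mid 2^k\) with \(4\nmid c_E\) when \(4\nmid M\), and \(c_E\) is a power of \(2\) in general. Granting that \(c_E\) is a power of \(2\) and \(d\) is odd, the odd part of \(\deg\pi_E\) divides \(d\). I would therefore aim to use the Ogg-type bound to control the degree of \emph{any} map from \(X_0(N)\) directly, rather than \(\deg\pi_E\). Ogg's point-counting argument applies to \(g\) itself: \(d\ge\#X_0(N)(\mathbb F_{p^2})/\#E'(\mathbb F_{p^2})\) for \(p\nmid N\). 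The count of supersingular points on \(X_0(N)\) grows with \(N\ge M\ge400000\), while \(\#E'(\mathbb F_{p^2})\le(p+1)^2\). This yields \(d>1645\) directly. The delicate part is choosing \(p\nmid N\) uniformly, since \(N\) could be divisible by many small primes. I would first try \(p\in\{2,3,5,7\}\). If all of these divide \(N\), I would use instead the cusp-count and genus bound at level \(N\) via a Castelnuovo-type or Abramovich gonality estimate \(d\ge\frac{7}{800}\psi(N)\). Inputting \(\psi(N)\ge\psi(M)\ge M\ge400000\) gives \(d\ge3500>1645\). I expect this to be the cleanest uniform route, and verifying which lower bound gives exactly \(1645\) is the main bookkeeping task.
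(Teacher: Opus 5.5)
Your final argument works after one correction, but it is organized differently from the paper's. The paper does not split on \(M\). It applies Ogg's supersingular-point count and the Hasse bound directly to \(g\) at the smallest prime \(p\nmid N\), which gives \(\frac{p-1}{12}\psi(N)\le d(p+1)^2\). The weakest case is \(p=17\), so \(30030\mid N\), and there the inequality reads \(\psi(N)\le 243d\le 399735\). Any \(p\ge19\) would force \(510510\mid N\), which is incompatible with the bound. Hence \(M\le N\le 399734\), and Corollary~\ref{cor:cremona-rank-bound} finishes. This is where \(1645\) comes from: it is the largest odd \(d\) with \(243d<400000\), not a lower bound for \(\deg\pi_E\) at the smallest \(M\).

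You instead treat \(M\ge 400000\) separately, using Ogg for \(p\le 7\) and Abramovich's bound otherwise. The correction is this: Abramovich bounds the \(\C\)-gonality. Composing \(g\) with a degree-\(2\) map \(E'\to\mathbb P^1\) only yields \(2d\ge\frac{7}{800}\psi(N)\), so \(d\ge\frac{7}{1600}\psi(N)>1750\), not \(3500\). That still suffices. In fact Abramovich alone settles the whole case \(M\ge400000\), so the Ogg step is redundant. Since the corollary only needs \(M<400000\), your argument actually reaches every odd \(d\le 1749\). The cost is a much deeper input, since Abramovich's bound rests on a spectral-gap estimate for congruence subgroups. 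The paper needs only elementary point counting over \(\mathbb F_{p^2}\).

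Finally, drop the Manin-constant digression. It is not used in your final argument. Also, the claim that \(c_E\) is known to be a power of \(2\) for every strong Weil curve is not an established result. What is known, e.g.\ \(\ord_p(c_E)=0\) when \(p^2\nmid M\), controls primes by their exponent in \(M\), not by parity.
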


\begin{proof}
Suppose that \(g:X_0(N)\to E'\) has degree \(d\) and that
\(\operatorname{rank}E'(\Q)>0\). Ogg's inequality \cite{OggHyperelliptic},
together with the Hasse bound, gives \(N\leq399734\). Corollary~\ref{cor:cremona-rank-bound}
then gives \(\operatorname{rank}E'(\Q)=0\), a contradiction.
\end{proof}

The case \(d=5\) recovers the result of
Derickx--Hwang--Jeon--Orli\'c \cite{DerickxHwangJeonOrlic} that no modular
curve \(X_0(N)/\Q\) admits a degree-\(5\) morphism over \(\Q\) to an elliptic
curve of positive \(\Q\)-rank.

Theorem~\ref{thm:watkins-rank-bound} is conditional on Watkins's conjecture.
For odd-degree morphisms, we use the following results of
Agashe--Ribet--Stein and Kazalicki--Kohen.

\begin{theorem}[Agashe--Ribet--Stein
\cite{AgasheRibetSteinCongruence}]
\label{thm:congruence-modular-parity}
If \(4\nmid M\), then the \(2\)-adic valuation of the congruence number of
\(E\) equals \(\ord_2(\deg\pi_E)\).
\end{theorem}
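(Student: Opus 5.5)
This result is cited from Agashe--Ribet--Stein \cite{AgasheRibetSteinCongruence} and is only used here. To reprove it, I would compare two finite Hecke modules whose orders give the congruence number \(r_E\) and the modular degree \(\deg\pi_E\). Then I would show that the two modules agree at the prime \(2\) whenever the hypothesis \(4\nmid M\) gives multiplicity one.

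\emph{Setup.} Let \(\mathbb T\subset\End(J_0(M))\) be the Hecke algebra. Let \(I_E\) be the kernel of the homomorphism \(\mathbb T\to\Z\) attached to \(f_E\). Put \(A=\Phi_E^\vee(E)\subseteq J_0(M)\), and let \(B\) be the complementary Hecke-stable abelian subvariety, namely the connected kernel of \(\Phi_E\).

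\emph{Step 1 (modular degree).} Since \(\Phi_E\circ\Phi_E^\vee=[\deg\pi_E]\), one gets \(A\cap B=A[\deg\pi_E]\). In particular the exponent of the finite group \(A\cap B\) is \(\deg\pi_E\), and the group is isomorphic to \((\Z/\deg\pi_E)^2\).

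\emph{Step 2 (congruence number).} Write \(S=S_2(\Gamma_0(M),\Z)\), and let \(T\) be the image of \(\mathbb T\) acting on \(S\). The congruence number is
\[
r_E=\#\,S\big/\bigl(\Z f_E+(f_E^\perp\cap S)\bigr).
\]
By the perfect pairing \(\mathbb T\times S\to\Z\), \((t,g)\mapsto a_1(t g)\), this equals the order of the analogous cokernel for \(\mathbb T/I_E\oplus \mathbb T/\mathrm{Ann}(I_E)\) inside \(\mathbb T\). Comparing the lattices \(H_1(A,\Z)\oplus H_1(B,\Z)\subseteq H_1(J_0(M),\Z)\) then gives an injection of the \(\mathbb T\)-module \(A\cap B\) into \((\Z/r_E)^2\). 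This recovers \(\deg\pi_E\mid r_E\).

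\emph{Step 3 (equality at \(2\)).} I must show that \(\ord_2(r_E)\le\ord_2(\deg\pi_E)\). I would localize at each maximal ideal \(\mathfrak m\) of \(\mathbb T\) of residue characteristic \(2\) that contains \(I_E\). If \(J_0(M)[\mathfrak m]\) has dimension \(2\) (multiplicity one), then \(\mathbb T_{\mathfrak m}\) is Gorenstein. Then \(H_1(J_0(M),\Z)_{\mathfrak m}\) is free of rank \(2\) over \(\mathbb T_{\mathfrak m}\), so the comparison in Step 2 is an isomorphism after \(\otimes\,\mathbb T_{\mathfrak m}\). Taking \(2\)-parts, the two orders agree.

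\emph{Main obstacle.} The hardest step is establishing multiplicity one at residue characteristic \(2\) when \(2\,\|\,M\) or \(2\nmid M\). For non-Eisenstein \(\mathfrak m\), I would first use Mazur's result when \(2\nmid M\), and Ribet's result for \(p\,\|\,M\) when \(2\,\|\,M\). The exceptional cases need separate care: \(\rho_{\mathfrak m}\) Eisenstein, or \(\rho_{\mathfrak m}\) finite at \(2\) with \(T_2\equiv\pm1\). For these I would follow Agashe--Ribet--Stein and argue directly with the \(\mathfrak m\)-adic Tate module and the Fricke/Atkin--Lehner action.
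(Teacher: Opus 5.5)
The paper does not prove this statement. It quotes it from Agashe--Ribet--Stein and uses it only in Theorem~\ref{thm:odd-degree-rank-zero}, so there is no internal argument to compare yours against. Taken on its own terms, your Steps~1--2 are fine. The group $A\cap B\cong(\Z/\deg\pi_E)^2$ is killed by $I_E+\mathrm{Ann}(I_E)$, which has index $r_E$ in $\mathbb T$, so $\deg\pi_E\mid r_E$.

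\textbf{The gap is Step~3.} It carries the whole content of the statement, and it is not carried out. You reduce $\ord_2(r_E)\le\ord_2(\deg\pi_E)$ to multiplicity one at the single ideal $\mathfrak m=(2,I_E)$. Then, for exactly the configurations where no multiplicity-one theorem is available, you write ``follow ARS''. Those configurations are:
\begin{itemize}
\item Eisenstein $\mathfrak m$, i.e.\ $E$ has a rational $2$-torsion point;
\item $2\nmid M$ with $\rho_{\mathfrak m}$ unramified at $2$ and scalar Frobenius;
\item $2\,\|\,M$ with $\rho_{\mathfrak m}$ finite at $2$.
\end{itemize}
This is not a removable technicality. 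In the last two configurations multiplicity one is known to fail in general. Kilford found non-Gorenstein $\mathbb T_{\mathfrak m}$ at residue characteristic $2$. Ribet's multiplicity-two theorem shows that for $p$-finite representations at $p\,\|\,N$, $J_0(N)[\mathfrak m]$ can be $4$-dimensional. Nothing in your sketch rules out $(2,I_E)$ being such an ideal. So the chain ``multiplicity one $\Rightarrow$ $\mathrm{Ta}_{\mathfrak m}$ free $\Rightarrow$ the comparison is an isomorphism'' cannot cover all cases. What is missing is a way to get $\ord_2(r_E)\le\ord_2(\deg\pi_E)$ without freeness of $\mathrm{Ta}_{\mathfrak m}$ over $\mathbb T_{\mathfrak m}$.

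\textbf{The case $2\nmid M$.} Here such a mechanism exists and needs no case analysis.
\begin{itemize}
\item $X_0(M)$ is smooth over $\Z_2$ with irreducible special fibre. Hence the cotangent lattice of the N\'eron model of $J_0(M)$ over $\Z_2$ is all of $S=S_2(\Gamma_0(M),\Z_2)$, via $q$-expansion at $\infty$.
\item Write $f_E\equiv\pm r_E g_0$ modulo $f_E^\perp\cap S$, with $g_0\in S$.
\item $\Phi_E^\vee$ extends to N\'eron models, and $(\Phi_E^\vee)^*$ kills $f_E^\perp$ by Hecke equivariance.
\item Apply $(\Phi_E^\vee)^*$ to $\Phi_E^*\omega_E=c_Ef_E$. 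This gives $\deg\pi_E\,\omega_E=\pm c_Er_E\,(\Phi_E^\vee)^*(g_0)$ with $(\Phi_E^\vee)^*(g_0)\in\Z_2\omega_E$.
\item Hence $\ord_2(r_E)\le\ord_2(\deg\pi_E)$, with no multiplicity-one or Eisenstein input.
\end{itemize}

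\textbf{The case $2\,\|\,M$.} The argument above breaks here, because $S$ is strictly larger than the N\'eron cotangent lattice. For instance, if $g$ has level $M/2$, then $w_2^*$ sends $g(q^2)\,dq/q$ to $\tfrac12 g(q)\,dq/q$. The congruences lost are those with $2$-old forms, which is exactly where Ribet's multiplicity-two phenomenon occurs. This case needs a genuinely separate argument, and your proposal does not supply one.
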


\begin{theorem}[Kazalicki--Kohen \cite{KazalickiKohenCorrigendum}]
\label{thm:kazalicki-kohen-rank-zero}
Let \(E/\Q\) be an elliptic curve. If the congruence number of \(E\) is
odd, then \(\operatorname{rank}E(\Q)=0\).
\end{theorem}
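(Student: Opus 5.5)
This result is imported from \cite{KazalickiKohenCorrigendum} and is used here as a black box. If I had to reprove it, I would pass from the congruence number to the local structure of the Hecke algebra at \(2\), and then bound a \(2\)-Selmer group. Let \(\mathbb T\) be the Hecke algebra acting on \(S_2(\Gamma_0(M))\). Let \(\mathfrak m\subset\mathbb T\) be the maximal ideal above \(2\) that contains the kernel of the eigencharacter of \(f_E\). The congruence number measures congruences between \(f_E\) and the orthogonal complement of \(f_E\), so it is odd exactly when no other eigenform is congruent to \(f_E\) modulo a prime above \(2\). This gives \(\mathbb T_{\mathfrak m}\simeq\Z_2\): the localization sees only \(f_E\).

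\textbf{Step 1: structural consequences.} I would feed \(\mathbb T_{\mathfrak m}\simeq\Z_2\) into the Calegari--Emerton analysis of Hecke algebras at \(2\), in the form used by Kazalicki--Kohen. The expected outputs are:
\begin{itemize}
\item \(\bar\rho_{E,2}\) is irreducible, so \(E(\Q)[2]=0\) and the image of \(\bar\rho_{E,2}\) is \(S_3\) or \(\Z/3\);
\item \(M\) is severely restricted, essentially a prime \(p\), or \(2p\) and a few similar shapes;
\item there is explicit local information at the bad primes: the reduction type, and the parity of the component groups, which are forced to be odd or to have controlled \(2\)-part;
\item the Atkin--Lehner eigenvalue of \(f_E\) is constrained, because \(w_M\) acts on \(\mathbb T_{\mathfrak m}\) and there is no room for a congruence with a form of opposite sign.
\end{itemize}

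\textbf{Step 2: parity and a Selmer bound.} The Atkin--Lehner constraint from Step 1 should force root number \(+1\), so the rank is even. To get rank exactly zero I would bound the \(2\)-Selmer group \(\mathrm{Sel}_2(E/\Q)\subset H^1(\Q,E[2])\). Since \(E[2]\) is irreducible, \(E[2]\) corresponds to a cubic field \(K\), and \(H^1(\Q,E[2])\) embeds in the kernel of the norm map \(K^\times/K^{\times2}\to\Q^\times/\Q^{\times2}\). The local conditions at \(p\mid M\) and at \(2\) are dictated by the reduction types found in Step 1. I would then show that the Selmer group is trivial: combine class-group and unit information for \(K\) (which is ramified only at the few primes dividing \(2M\)) with the Greenberg--Wiles Euler-characteristic formula. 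That formula compares \(\mathrm{Sel}_2(E)\) with a dual Selmer group of equal size. Alternatively, I would show directly that \(\dim_{\mathbb F_2}\mathrm{Sel}_2\le1\), so that parity forces \(0\).

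\textbf{Main obstacle.} I expect the Selmer computation at \(2\) to be the hardest part. The local condition at \(2\) depends on whether \(E\) is ordinary or supersingular at \(2\), and on the shape of \(M\) at \(2\), which may force a case split. I would try first to show that Step 1 forces \(E\) to be supersingular at \(2\) with irreducible local representation. In that case the local condition at \(2\) becomes minimal and the Euler-characteristic argument closes. This is where I would expect the original argument to need its corrigendum-level care.
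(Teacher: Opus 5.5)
Citing the theorem as a black box is exactly what the paper does. The paper gives no argument beyond the citation to Kazalicki--Kohen, so on that level your proposal agrees with it. The reproof you sketch, however, fails at its first structural claim: \(\mathbb T_{\mathfrak m}\simeq\Z_2\) does \emph{not} force \(\bar\rho_{E,2}\) to be irreducible. For \(M=17\) the curve \(X_0(17)\) has genus one. Hence \(S_2(\Gamma_0(17))=\C f_E\), and the congruence number of \(E=X_0(17)\) (Cremona 17a1) is \(1\). Yet \(E(\Q)_{\mathrm{tors}}\simeq\Z/4\Z\) contains a rational point of order \(2\). At the genus-one levels \(15\) and \(24\), the same reasoning gives curves with full rational \(2\)-torsion, where \(\bar\rho_{E,2}\) is trivial. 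So there is a reducible (Eisenstein) case that your plan never treats. In that case the cubic field \(K\) on which Step~2 rests does not exist, and a separate argument is needed, for instance a \(2\)-isogeny descent. The genus-one levels \(15,21,27,36,49\) all have congruence number \(1\). They also show that the restriction on \(M\) is looser than ``\(p\) or \(2p\)''; for example, \(27\) has irreducible \(E[2]\) and additive reduction.

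Even in the irreducible case, Step~2 is a plan rather than an argument. With the usual Kummer local conditions, \(E[2]\) is self-dual and the Selmer structure is its own orthogonal complement. The Greenberg--Wiles formula therefore collapses to the tautology \(|\mathrm{Sel}_2(E)|=|\mathrm{Sel}_2(E)|\); your own remark that the dual Selmer group has ``equal size'' shows this. It gives no bound. All the content would have to come from the class-group and unit computation for \(K\), and that is precisely the missing input.

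The root-number step is also not justified as stated. The absence of other forms congruent to \(f_E\) does not by itself determine the sign of \(w_M\) on \(f_E\). What does work when \(E(\Q)[2]=0\) is the classical fixed-point argument. The modular degree divides the congruence number, so \(\deg\pi_E\) is odd. If \(w_Mf_E=f_E\), then \(\pi_E\circ w_M=\pi_E+P\) with \(P=\pi_E(0)\in E(\Q)\) and \(2P=0\), so \(P=0\). Then \(w_M\) preserves every fibre of \(\pi_E\). Generic fibres have odd size, so \(w_M\) would have a fixed point in each of them, hence infinitely many fixed points, which is impossible. This gives root number \(+1\), but the vanishing of \(\mathrm{Sel}_2(E)\) still has to be proved.
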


Combining Theorems~\ref{thm:congruence-modular-parity}
and~\ref{thm:kazalicki-kohen-rank-zero} with the divisibility theorem
extends the odd-degree rank-zero criterion from level \(M\) to every higher
level \(N\) divisible by \(M\), provided \(4\nmid M\).

\begin{theorem}\label{thm:odd-degree-rank-zero}
Assume that \(4\nmid M\). Let \(N\) be a positive multiple of \(M\), and let
\(E'/\Q\) be \(\Q\)-isogenous to \(E\). If
\(g:X_0(N)\to E'\) is a nonconstant morphism of odd degree, then
\(\operatorname{rank}E'(\Q)=0\).
\end{theorem}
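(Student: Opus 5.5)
The plan is to reduce the statement to the level-\(M\) criterion. Theorem~\ref{thm:main} will show that \(\deg\pi_E\) is odd. Theorems~\ref{thm:congruence-modular-parity} and~\ref{thm:kazalicki-kohen-rank-zero} will then turn this into the vanishing of the rank of \(E\), and hence of \(E'\). Let \(d=\deg g\), which is odd by hypothesis. Theorem~\ref{thm:main}, in the form \eqref{eq:rank-input-divisibility}, gives \(\deg\pi_E\mid c_E^{\Omega(N/M)}d\). Taking \(2\)-adic valuations,
\[
\ord_2(\deg\pi_E)\le \Omega(N/M)\ord_2(c_E)+\ord_2(d)=\Omega(N/M)\ord_2(c_E).
\]
So the first task is to show that \(c_E\) is odd whenever \(4\nmid M\).

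For the oddness of \(c_E\) I will use the prime-by-prime form of \v{C}esnavi\v{c}ius's semistable result \cite{CesnaviciusManinSemistable}. For the optimal curve of conductor \(M\) and a prime \(p\) with \(p^2\nmid M\), it gives \(p\nmid c_E\); this local statement is what underlies the squarefree case \(c_E=1\) quoted in the introduction. Applied with \(p=2\) and the hypothesis \(4\nmid M\), it yields \(\ord_2(c_E)=0\). The displayed inequality then gives \(\ord_2(\deg\pi_E)=0\), so \(\deg\pi_E\) is odd.

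Next, since \(4\nmid M\), Theorem~\ref{thm:congruence-modular-parity} identifies the \(2\)-adic valuation of the congruence number of \(E\) with \(\ord_2(\deg\pi_E)=0\), so the congruence number is odd. Theorem~\ref{thm:kazalicki-kohen-rank-zero} then gives \(\operatorname{rank}E(\Q)=0\). Finally, Mordell--Weil rank is invariant under \(\Q\)-isogeny, as already used in the proof of Theorem~\ref{thm:watkins-rank-bound}, so \(\operatorname{rank}E'(\Q)=0\).

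The main obstacle is the step \(2\nmid c_E\). I need to make sure the cited Manin-constant result really applies at the single prime \(2\) with \(2\Vert M\), or with \(2\nmid M\), and not only when \(M\) is globally squarefree. If only the squarefree statement were available, the fallback is to avoid \(c_E\) entirely by proving directly that \(\nu\) in the proof of Theorem~\ref{thm:main} is odd. At \(\ell=2\) the hypothesis \(4\nmid M\) does not restrict \(\ord_2(N/M)\), so this fallback would need a separate argument. I therefore intend to rely on the local form of \cite{CesnaviciusManinSemistable}, which is how that result is usually stated. The remaining steps are direct applications of results already stated in the paper.
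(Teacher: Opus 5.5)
Your proposal is correct and follows the paper's proof step for step: \v{C}esnavi\v{c}ius's Theorem~1.2, which is indeed local at each prime (\(p^2\nmid M\) implies \(p\nmid c_E\)), gives \(\ord_2(c_E)=0\). Then \eqref{eq:rank-input-divisibility} makes \(\deg\pi_E\) odd, and Agashe--Ribet--Stein, Kazalicki--Kohen, and isogeny invariance of rank finish the argument, so the fallback you sketch is unnecessary.
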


\begin{proof}
Since \(4\nmid M\), we have \(\ord_2(c_E)=0\) by
\v{C}esnavi\v{c}ius \cite[Theorem~1.2]{CesnaviciusManinSemistable}.
Thus \(\deg\pi_E\) is odd by \eqref{eq:rank-input-divisibility}.
By Theorem~\ref{thm:congruence-modular-parity}, the congruence number of \(E\)
is odd. Theorem~\ref{thm:kazalicki-kohen-rank-zero} then gives
\(\operatorname{rank}E(\Q)=0\). Since \(E\) and \(E'\) are \(\Q\)-isogenous,
\(\operatorname{rank}E'(\Q)=0\).
\end{proof}

\section{A tower criterion and compatible intermediate modular curves}\label{sec:tower-intermediate}

Theorem \ref{thm:denominator} follows from the existence of a rational
old basis, a two-cusp bound for its coefficients, and connected-kernel descent
at repeated primes. We formulate these three properties as a criterion for
compatible towers of curves and verify the criterion for
intermediate modular curves between \(X_1\) and \(X_0\).

Throughout this section, whenever \(M\mid N\), write \(R=N/M\). Recall that
\(\Omega(R)=\sum_p\ord_p(R)\), with \(\Omega(1)=0\), and write
\(\Z_{(\ell)}\) for the localization of \(\Z\) at a prime \(\ell\).

For a positive integer \(c\), set \(\mathcal D_c(1)=1\), and for \(R>1\) put
\begin{equation}\label{eq:sharp-D-definition}
        \mathcal D_c(R)
        =
        \prod_{\ell\mid c}
        \ell^{\ord_\ell(c)+\lfloor\ord_\ell(R)/2\rfloor}.
\end{equation}

\subsection{An abstract tower criterion}

\begin{proposition}
\label{prop:connected-quotient-saturation}
Let \(q:B\to C\) be a surjective homomorphism of abelian varieties over \(\Q\)
with geometrically connected kernel, and let \(A/\Q\) be an abelian variety.
Precomposition with \(q\) defines an injective homomorphism with saturated image
\[
        \Hom_\Q(C,A)\longrightarrow\Hom_\Q(B,A),
        \qquad H\longmapsto H\circ q.
\]
More precisely, suppose that \(H\circ q=nG\) for some integer \(n\geq1\), with
\(H\in\Hom_\Q(C,A)\) and \(G\in\Hom_\Q(B,A)\). Then there is a unique
\(H_1\in\Hom_\Q(C,A)\) such that \(H=nH_1\) and \(G=H_1\circ q\).
For every prime \(p\), if
\(H\in\Hom_\Q(C,A)\otimes_\Z\Q\) and
\(H\circ q\in\Hom_\Q(B,A)\otimes_\Z\Z_{(p)}\), then
\(H\in\Hom_\Q(C,A)\otimes_\Z\Z_{(p)}\).
\end{proposition}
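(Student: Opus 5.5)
This is the abstract version of Lemma~\ref{lem:connected-descent} and Corollary~\ref{cor:connected-local}, and I would prove it by repeating those arguments with \((\delta_\ell)_*\) replaced by \(q\). Let \(K=\ker q\), which is geometrically connected by hypothesis. Since \(q\) is surjective, the quotient \(B/K\) is identified with \(C\) via \(q\), by the universal property of quotients of abelian varieties (or group schemes) by closed subgroup schemes \cite{Poonen}.

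\emph{Injectivity.} If \(H\circ q=0\), then \(H\) vanishes on \(q(B)=C\), so \(H=0\).

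\emph{Divisibility statement.} Suppose \(H\circ q=nG\). Then \(nG|_K=H\circ q|_K=0\), so \(G|_K\) factors through \(A[n]\). Over \(\Q\), the group scheme \(A[n]\) is finite étale. The restriction \(G|_K\) is a morphism from a geometrically connected scheme into a finite étale scheme, so it is constant. Since it is a homomorphism, it is zero. Strictly, one should pass to the reduced subscheme of \(K\); connectedness already forces the map to land in the identity component, and \(A[n]^0\) is trivial in characteristic zero.

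Therefore \(G\) factors uniquely through the quotient as \(G=H_1\circ q\) with \(H_1\in\Hom_\Q(C,A)\). This gives \((H-nH_1)\circ q=0\), and injectivity yields \(H=nH_1\). Uniqueness of \(H_1\) also follows from injectivity.

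\emph{Saturation.} Suppose \(H\circ q=nG\) with \(H\) in the image of precomposition and \(G\in\Hom_\Q(B,A)\). The previous step shows \(G\) itself lies in the image. So the cokernel of precomposition is torsion-free, and the image is saturated.

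\emph{Local statement.} Take \(H\in\Hom_\Q(C,A)\otimes\Q\) with \(H\circ q\) integral at \(p\).
\begin{itemize}
\item Choose \(t\) prime to \(p\) such that \(G:=t(H\circ q)\) is integral.
\item Choose \(n\ge1\) such that \(ntH\) is integral.
\item Then \((ntH)\circ q=nG\), so the divisibility statement gives \(H_1\) with \(ntH=nH_1\).
\item Hom groups of abelian varieties are torsion-free, as in the proof of Corollary~\ref{cor:connected-local}. Hence \(tH=H_1\) is integral, and \(H\in\Hom_\Q(C,A)\otimes\Z_{(p)}\).
\end{itemize}

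\emph{Main obstacle.} The only delicate point is the vanishing of \(G|_K\). It needs geometric connectedness of \(K\) together with the fact that \(A[n]\) is étale in characteristic zero. I would handle it by base change to \(\overline{\Q}\) and using that connected group schemes map to the identity component.
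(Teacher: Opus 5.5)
Your proposal is correct and follows the paper's proof essentially step for step. Injectivity comes from surjectivity of \(q\), and \(G|_K\) vanishes because it lands in the finite \'etale \(A[n]\) while \(K\) is geometrically connected. Then \(G\) factors through \(B/K\simeq C\), and for the local statement you clear a prime-to-\(p\) denominator \(t\), choose \(n\) with \(ntH\) integral, and conclude using torsion-freeness of Hom groups.

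Your extra remark about passing to the reduced subscheme of \(K\) is harmless but unnecessary. A connected scheme mapping homomorphically into an \'etale group scheme already factors through the open-and-closed identity section, and \(K\) is reduced in characteristic zero anyway.
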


\begin{proof}
Injectivity follows from the surjectivity of \(q\). Put \(K=\ker(q)\), and
suppose that \(H\circ q=nG\). The restriction of \(G\) to \(K\) has image in
the finite \'etale group scheme \(A[n]\). Since \(K\) is geometrically
connected,
the restriction \(G|_K\) is constant. Because it is a homomorphism, it is zero.
Since \(G|_K=0\), the homomorphism \(G\) factors uniquely through the
quotient \(B/K\simeq C\) by its universal property
\cite{Poonen}. Thus \(G=H_1\circ q\) for a unique
\(H_1\in\Hom_\Q(C,A)\). Substitution and the
surjectivity of \(q\) imply \(H=nH_1\).

To prove the localized assertion, choose an integer \(t\)
prime to \(p\) such that \(t(H\circ q)\) is integral, and then choose
\(n\geq1\) such that \(ntH\) is integral. The equality
\[
        (ntH)\circ q=n\bigl(t(H\circ q)\bigr)
\]
satisfies the hypotheses of the integral assertion. Therefore there is
\(H_1\in\Hom_\Q(C,A)\) such that \(ntH=nH_1\). Hom groups of abelian varieties
are torsion-free. Indeed, the image of a homomorphism annihilated by \(n\) is
both connected and contained in the finite \'etale group scheme \(A[n]\), and
is therefore zero. It follows that \(tH=H_1\) is integral. Since \(t\) is a
unit in \(\Z_{(p)}\), the localized assertion follows.
\end{proof}

If \(u\circ\Phi\) spans
\(\Hom_\Q(J,E')\otimes_\Z\Q\), Proposition
\ref{prop:connected-quotient-saturation} implies that \(u\circ\Phi\) is
primitive in \(\Hom_\Q(J,E')\).

\begin{corollary}
\label{cor:optimal-quotient-primitivity}
Let \(J/\Q\) be an abelian variety, let \(E/\Q\) be an elliptic curve, and let
\(\Phi:J\to E\) be a surjective homomorphism with geometrically connected
kernel. Let \(E'/\Q\) be isogenous to \(E\), and let \(u:E\to E'\) generate
\(\Hom_\Q(E,E')\). If
\begin{equation}\label{eq:optimal-quotient-rational-span}
        \Hom_\Q(J,E')\otimes_\Z\Q=\Q\,(u\circ\Phi),
\end{equation}
then composition with \(\Phi\) induces an isomorphism
\[
        \Hom_\Q(E,E')\xrightarrow{\sim}\Hom_\Q(J,E'),
        \qquad v\longmapsto v\circ\Phi,
\]
and \(u\circ\Phi\) is primitive in \(\Hom_\Q(J,E')\).
\end{corollary}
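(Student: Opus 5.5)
Apply Proposition~\ref{prop:connected-quotient-saturation} with \(B=J\), \(C=E\), \(q=\Phi\) and \(A=E'\). Precomposition with \(\Phi\) then gives an injective homomorphism
\[
\Hom_\Q(E,E')\to\Hom_\Q(J,E'), \qquad v\mapsto v\circ\Phi,
\]
whose image is saturated. It remains to show that this map is surjective; the primitivity of \(u\circ\Phi\) will follow from that.

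\textbf{Surjectivity.} Let \(H\in\Hom_\Q(J,E')\).

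1. By \eqref{eq:optimal-quotient-rational-span}, \(H=x\,(u\circ\Phi)\) for some \(x\in\Q\). Write \(x=a/n\) with \(n\ge1\) and \(a\in\Z\).
2. Then \(nH=a\,u\circ\Phi=(au)\circ\Phi\).
3. The integral assertion of Proposition~\ref{prop:connected-quotient-saturation}, applied with the homomorphism \(au\in\Hom_\Q(E,E')\), gives \(H_1\in\Hom_\Q(E,E')\) with \(au=nH_1\) and \(H=H_1\circ\Phi\).

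So every \(H\) lies in the image, and the map is an isomorphism of \(\Z\)-modules.

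\textbf{Primitivity.} The group \(\Hom_\Q(E,E')\) is free of rank one; it is nonzero because \(E'\) is isogenous to \(E\), and it is torsion-free. Since \(u\) generates it, \(\Hom_\Q(E,E')=\Z u\). Transporting through the isomorphism gives \(\Hom_\Q(J,E')=\Z(u\circ\Phi)\). A generator of a rank-one free group is primitive, so \(u\circ\Phi\) is primitive.

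\textbf{Main obstacle.} The only point needing care is the bookkeeping in step 3. One must check that \(au\circ\Phi\) matches the form \(H\circ q=nG\) required by the Proposition, with \(au\) playing the role of its \(H\) and our \(H\) playing the role of its \(G\). The hypotheses that \(\Phi\) is surjective with geometrically connected kernel are exactly what that Proposition needs, so I expect no genuine difficulty.
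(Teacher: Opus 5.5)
Your proof is correct and is the paper's argument unpacked. The paper observes that the cokernel of \(v\mapsto v\circ\Phi\) is torsion by \eqref{eq:optimal-quotient-rational-span} and torsion-free by Proposition~\ref{prop:connected-quotient-saturation}, hence zero; your explicit step 3 verifies exactly this element by element. The only tacit step is passing from \(nH=a(u\circ\Phi)\) in \(\Hom_\Q(J,E')\otimes_\Z\Q\) to the same equality in \(\Hom_\Q(J,E')\), which uses the torsion-freeness of Hom groups established in that proposition.
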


\begin{proof}
The cokernel of precomposition with \(\Phi\) is torsion by
\eqref{eq:optimal-quotient-rational-span} and torsion-free by Proposition
\ref{prop:connected-quotient-saturation}. Hence the cokernel is zero, and
precomposition with \(\Phi\) is an isomorphism. Since \(u\) generates
\(\Hom_\Q(E,E')\), its image
\(u\circ\Phi\) is primitive in \(\Hom_\Q(J,E')\).
\end{proof}

Fix a positive integer \(M\). For every positive multiple \(L\) of \(M\), let
\(\mathcal X(L)\) be a smooth projective geometrically connected curve over
\(\Q\), with a chosen point \(P_L\in\mathcal X(L)(\Q)\). Suppose that whenever
\(M\mid L\mid N\) and \(r\mid N/L\), there is a finite morphism
\(\iota^{\mathcal X}_{r,N,L}:\mathcal X(N)\to\mathcal X(L)\) that sends
\(P_N\) to \(P_L\). For every \(s\mid L/M\), require
\begin{equation}\label{eq:tower-map-compatibility}
        \iota^{\mathcal X}_{rs,N,M}
        =
        \iota^{\mathcal X}_{s,L,M}\circ
        \iota^{\mathcal X}_{r,N,L}.
\end{equation}
Equivalently, the following diagram commutes.
\[
\begin{tikzcd}[column sep=large,row sep=large]
\mathcal X(N)
  \arrow[r,"\iota^{\mathcal X}_{r,N,L}"]
  \arrow[dr,"\iota^{\mathcal X}_{rs,N,M}"']
& \mathcal X(L)
  \arrow[d,"\iota^{\mathcal X}_{s,L,M}"] \\
& \mathcal X(M).
\end{tikzcd}
\]
Set \(J_{\mathcal X}(L)=J(\mathcal X(L))\). Let
\(\Phi_M:J_{\mathcal X}(M)\twoheadrightarrow E_0\) be an optimal elliptic
quotient over \(\Q\), so \(\ker(\Phi_M)\) is geometrically connected. Let
\(\pi_M=\Phi_M\circ j_{P_M}:\mathcal X(M)\to E_0\), and let \(E'/\Q\) be in
the \(\Q\)-isogeny class of \(E_0\). Choose a generator
\(u:E_0\to E'\) of \(\Hom_\Q(E_0,E')\), and fix a positive integer
\(c_{\mathcal X,u}\). For every multiple \(N\) of \(M\) and every
\(r\mid N/M\), set
\(\Phi^{\mathcal X}_{r,u,N,M}
=u\circ\Phi_M\circ\iota^{\mathcal X}_{r,N,M,*}\).
We call \(\Phi^{\mathcal X}_{r,u,N,M}\), for \(r\mid N/M\), a level-\(N\)
old map.

Consider the following three conditions for every multiple \(N\) of \(M\), with
\(R=N/M\).
\begin{enumerate}
\item[{\raisebox{\baselineskip}[0pt][0pt]{\hypertarget{tower-condition-T1}{}}\(\mathrm{(T1)}\)}] The level-\(N\) old maps form a \(\Q\)-basis of
\(\Hom_\Q(J_{\mathcal X}(N),E')\otimes_\Z\Q\), and \(u\circ\Phi_M\) is
primitive in \(\Hom_\Q(J_{\mathcal X}(M),E')\).

\item[{\raisebox{\baselineskip}[0pt][0pt]{\hypertarget{tower-condition-T2}{}}\(\mathrm{(T2)}\)}] Let \(\ell\) be any prime, and let \(x_r\in\Q\) for
\(r\mid R\). If
\[
        G=\sum_{r\mid R}x_r\Phi^{\mathcal X}_{r,u,N,M}
        \in\Hom_\Q(J_{\mathcal X}(N),E')\otimes_\Z\Z_{(\ell)},
\]
then
\begin{equation}\label{eq:tower-two-cusp-bound}
        \gcd(r,R/r)c_{\mathcal X,u}x_r\in\Z_{(\ell)}
        \qquad(r\mid R).
\end{equation}

\item[{\raisebox{\baselineskip}[0pt][0pt]{\hypertarget{tower-condition-T3}{}}\(\mathrm{(T3)}\)}] If \(\ell^2\mid R\), put \(N'=N/\ell\). The pushforward
\(\iota^{\mathcal X}_{\ell,N,N',*}:J_{\mathcal X}(N)\to
J_{\mathcal X}(N')\) is surjective with geometrically connected kernel, and
\[
        \Phi^{\mathcal X}_{\ell s,u,N,M}
        =
        \Phi^{\mathcal X}_{s,u,N',M}
        \circ\iota^{\mathcal X}_{\ell,N,N',*}
        \qquad(s\mid N'/M).
\]
\end{enumerate}

\begin{theorem}\label{thm:tower-denominator-criterion}
With the notation of this subsection, assume conditions
\hyperlink{tower-condition-T1}{\(\mathrm{(T1)}\)}--\hyperlink{tower-condition-T3}{\(\mathrm{(T3)}\)}.
Let \(N\) be a multiple of \(M\), and let
\(G=\sum_{r\mid R}x_r\Phi^{\mathcal X}_{r,u,N,M}
\in\Hom_\Q(J_{\mathcal X}(N),E')\).
For a prime \(\ell\) and a divisor \(r\mid R\), put
\(a=\ord_\ell(c_{\mathcal X,u})\), \(m=\ord_\ell(R)\), and
\(k=\ord_\ell(r)\).
Then
\begin{equation}\label{eq:tower-coordinate-bound}
        \begin{cases}
        x_r\in\Z_{(\ell)},&a=0,\\[2mm]
        \ell^{a+\min\{k,m-k\}}x_r\in\Z_{(\ell)},&a>0.
        \end{cases}
\end{equation}
Consequently,
\begin{equation}\label{eq:tower-sharp-lattice}
        \mathcal D_{c_{\mathcal X,u}}(R)
        \Hom_\Q(J_{\mathcal X}(N),E')
        \subseteq
        \bigoplus_{r\mid R}\Z\Phi^{\mathcal X}_{r,u,N,M}.
\end{equation}
In particular, if \(c_{\mathcal X,u}=1\), then the old maps form a
\(\Z\)-basis of \(\Hom_\Q(J_{\mathcal X}(N),E')\).
\end{theorem}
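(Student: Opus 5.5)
The plan is to prove the local bound \eqref{eq:tower-coordinate-bound} one prime \(\ell\) at a time, and then assemble \eqref{eq:tower-sharp-lattice} from it. The first observation is that the case \(a>0\) follows directly from \hyperlink{tower-condition-T2}{\(\mathrm{(T2)}\)}. Indeed, \(\ord_\ell\gcd(r,R/r)=\min\{k,m-k\}\), and \(c_{\mathcal X,u}/\ell^a\) is a unit in \(\Z_{(\ell)}\). So \eqref{eq:tower-two-cusp-bound}, applied to \(G\) itself (which is integral, hence in the localized Hom group), gives \(\ell^{a+\min\{k,m-k\}}x_r\in\Z_{(\ell)}\) with no descent needed.

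The real content is the case \(a=0\), where integrality is required for every \(r\). Here I would induct on \(m=\ord_\ell(R)\). The inductive statement is: for every multiple \(N\) of \(M\) with \(\ord_\ell(N/M)=m\), and every \(G\in\Hom_\Q(J_{\mathcal X}(N),E')\otimes_\Z\Z_{(\ell)}\), all old coordinates \(x_r\) lie in \(\Z_{(\ell)}\). Stating it for localized Hom groups is what lets the induction apply to the descended map.

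\emph{Base case \(m\le1\).} Here \(\min\{k,m-k\}=0\) for every \(r\), so \hyperlink{tower-condition-T2}{\(\mathrm{(T2)}\)} gives \(x_r\in\Z_{(\ell)}\) outright. When \(R=1\) one can alternatively use the primitivity in \hyperlink{tower-condition-T1}{\(\mathrm{(T1)}\)}.

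\emph{Inductive step \(m\ge2\).} I copy the proof of Theorem~\ref{thm:denominator}. For \(\ell\nmid r\) we have \(k=0\), so \hyperlink{tower-condition-T2}{\(\mathrm{(T2)}\)} gives \(x_r\in\Z_{(\ell)}\). Subtracting these terms leaves
\[
G_1=G-\sum_{\ell\nmid r}x_r\Phi^{\mathcal X}_{r,u,N,M}\in\Hom_\Q(J_{\mathcal X}(N),E')\otimes_\Z\Z_{(\ell)}.
\]
By the compatibility in \hyperlink{tower-condition-T3}{\(\mathrm{(T3)}\)}, this equals \(H\circ\iota^{\mathcal X}_{\ell,N,N',*}\) with \(H=\sum_{s\mid N'/M}x_{\ell s}\Phi^{\mathcal X}_{s,u,N',M}\), a priori only in the rationalized Hom group. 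Since \(\iota^{\mathcal X}_{\ell,N,N',*}\) is surjective with geometrically connected kernel, the localized part of Proposition~\ref{prop:connected-quotient-saturation} places \(H\) in \(\Hom_\Q(J_{\mathcal X}(N'),E')\otimes_\Z\Z_{(\ell)}\). The old coordinates of \(H\) at level \(N'\) are exactly the \(x_{\ell s}\), because the old maps form a \(\Q\)-basis at level \(N'\) by \hyperlink{tower-condition-T1}{\(\mathrm{(T1)}\)}. As \(\ord_\ell(N'/M)=m-1\), the induction hypothesis gives \(x_{\ell s}\in\Z_{(\ell)}\). This completes the case \(a=0\).

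For \eqref{eq:tower-sharp-lattice}, the case \(R=1\) is the primitivity in \hyperlink{tower-condition-T1}{\(\mathrm{(T1)}\)}, which forces \(x_1\in\Z\); this matches \(\mathcal D_c(1)=1\). For \(R>1\), fix a prime \(\ell\). If \(\ell\nmid c_{\mathcal X,u}\), the \(a=0\) case gives integrality at \(\ell\). If \(\ell\mid c_{\mathcal X,u}\), use \(\min\{k,m-k\}\le\lfloor m/2\rfloor\): the exponent \(a+\lfloor m/2\rfloor\) of \(\ell\) in \(\mathcal D_{c_{\mathcal X,u}}(R)\) dominates the one in \eqref{eq:tower-coordinate-bound}. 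So \(\mathcal D_{c_{\mathcal X,u}}(R)x_r\in\Z_{(\ell)}\) for all \(\ell\), hence lies in \(\Z\). The final statement about a \(\Z\)-basis when \(c_{\mathcal X,u}=1\) follows because then \(\mathcal D=1\), and linear independence comes from \hyperlink{tower-condition-T1}{\(\mathrm{(T1)}\)}.

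The only delicate point I expect is bookkeeping in the induction. It must run over all levels simultaneously and apply to \(\Z_{(\ell)}\)-integral rather than integral homomorphisms. Otherwise the descended \(H\), which is only known to be \(\ell\)-locally integral, would not be covered.
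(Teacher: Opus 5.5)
Your proposal is correct and follows the paper's proof essentially step for step. It uses \(\mathrm{(T2)}\) alone when \(a>0\), and induction on \(m\) when \(a=0\): \(\mathrm{(T2)}\) handles \(\ell\nmid r\), and \(\mathrm{(T3)}\) together with the localized assertion of Proposition~\ref{prop:connected-quotient-saturation} handles the multiples of \(\ell\). The bound \(\min\{k,m-k\}\le\lfloor m/2\rfloor\) then assembles the lattice inclusion. The only difference is cosmetic: you state the induction for \(\Z_{(\ell)}\)-integral homomorphisms, whereas the paper clears the prime-to-\(\ell\) denominator of \(H\) with an integer \(t\) and applies the hypothesis to \(tH\).
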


\begin{proof}
If \(R=1\), condition \hyperlink{tower-condition-T1}{\(\mathrm{(T1)}\)} means that \(G=x_1u\circ\Phi_M\). The
primitivity of \(u\circ\Phi_M\) forces \(x_1\in\Z\). This establishes
\eqref{eq:tower-coordinate-bound}, \eqref{eq:tower-sharp-lattice}, and the basis
assertion in this case. Assume \(R>1\), and fix a prime \(\ell\). If \(a>0\), then
\(c_{\mathcal X,u}/\ell^a\) is a unit in
\(\Z_{(\ell)}\). Equation \eqref{eq:tower-two-cusp-bound}, together with
\(\ord_\ell(\gcd(r,R/r))=\min\{k,m-k\}\), yields the second line of
\eqref{eq:tower-coordinate-bound}.

Assume that \(a=0\). We prove \(x_r\in\Z_{(\ell)}\) by induction on \(m\).
If \(m\le1\), then \(\gcd(r,R/r)\) is an \(\ell\)-adic unit for every
\(r\mid R\), so condition \hyperlink{tower-condition-T2}{\(\mathrm{(T2)}\)} implies
\(x_r\in\Z_{(\ell)}\) for every \(r\mid R\). This is the first line of
\eqref{eq:tower-coordinate-bound}. Suppose \(m\ge2\). By condition
\hyperlink{tower-condition-T2}{\(\mathrm{(T2)}\)},
\(x_r\in\Z_{(\ell)}\) whenever \(\ell\nmid r\). After subtracting these terms
from \(G\), write
\[
        \sum_{s\mid R/\ell}x_{\ell s}
        \Phi^{\mathcal X}_{\ell s,u,N,M}
        =
        H\circ\iota^{\mathcal X}_{\ell,N,N/\ell,*},
        \qquad
        H=
        \sum_{s\mid R/\ell}x_{\ell s}
        \Phi^{\mathcal X}_{s,u,N/\ell,M}.
\]
By condition \hyperlink{tower-condition-T3}{\(\mathrm{(T3)}\)}, the pushforward
\(\iota^{\mathcal X}_{\ell,N,N/\ell,*}\) is surjective with geometrically
connected kernel. The localized assertion of Proposition
\ref{prop:connected-quotient-saturation}, applied with
\(q=\iota^{\mathcal X}_{\ell,N,N/\ell,*}\), then shows that \(H\) belongs to
\(\Hom_\Q(J_{\mathcal X}(N/\ell),E')\otimes_\Z\Z_{(\ell)}\).
Choose an integer \(t\) prime to \(\ell\) such that
\(tH\in\Hom_\Q(J_{\mathcal X}(N/\ell),E')\). The induction hypothesis at level
\(N/\ell\), applied to \(tH\), yields
\(t x_{\ell s}\in\Z_{(\ell)}\) for every \(s\mid R/\ell\). Since \(t\) is a
unit in \(\Z_{(\ell)}\), we have \(x_{\ell s}\in\Z_{(\ell)}\). This
completes the proof of the first line of \eqref{eq:tower-coordinate-bound}.

For \(a>0\), the largest possible value of \(\min\{k,m-k\}\), as
\(0\le k\le m\), is \(\lfloor m/2\rfloor\). Combining the local statements
over all primes establishes \eqref{eq:tower-sharp-lattice}. If
\(c_{\mathcal X,u}=1\), then \(\mathcal D_{c_{\mathcal X,u}}(R)=1\). The reverse
inclusion in \eqref{eq:tower-sharp-lattice} holds because every old map is an
integral homomorphism, so the two lattices are equal.
\end{proof}

The degree-pairing matrix relates the old-basis coordinates in
Theorem \ref{thm:tower-denominator-criterion} to the degree of the associated
morphism.

\begin{proposition}\label{prop:tower-degree-matrix}
Assume condition \hyperlink{tower-condition-T1}{\(\mathrm{(T1)}\)}. Let \(N\) be a multiple of \(M\), and write
\(\iota_r=\iota^{\mathcal X}_{r,N,M}\). There is a symmetric positive
definite integral matrix
\(A_{\mathcal X,N}=(a_{r,s})_{r,s\mid R}\) characterized by
\[
        \Phi_M\circ\iota_{r,*}\circ\iota_s^*
        =
        [a_{r,s}]\circ\Phi_M.
\]
Let \(0\ne G\in\Hom_\Q(J_{\mathcal X}(N),E')\), and suppose that
\[
        G=\nu^{-1}\sum_{r\mid R}b_r\Phi^{\mathcal X}_{r,u,N,M},
        \qquad b_r\in\Z,\quad \nu\in\Z_{>0}.
\]
For \(\boldsymbol b=(b_r)_{r\mid R}\), the morphism
\(g=G\circ j_{P_N}:\mathcal X(N)\to E'\) is nonconstant and satisfies
\begin{equation}\label{eq:tower-degree-and-first-denominator}
        \deg g=
        \deg(u\circ\pi_M)
        \frac{\boldsymbol b^{T}A_{\mathcal X,N}\boldsymbol b}{\nu^2},
        \qquad
        \nu\mid\boldsymbol b^{T}A_{\mathcal X,N}\boldsymbol b.
\end{equation}
\end{proposition}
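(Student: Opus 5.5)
The plan mirrors the \(X_0\) argument of Section~\ref{sec:old-degree-form} and Lemma~\ref{lem:first-n}, with condition \hyperlink{tower-condition-T1}{\(\mathrm{(T1)}\)} replacing Theorem~\ref{thm:rational-old-basis} and Lemma~\ref{lem:optimal-primitivity}.

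\emph{Step 1: define the matrix entries.} By (T1) with \(E'=E_0\) and \(u=\mathrm{id}\), \(\Hom_\Q(J_{\mathcal X}(M),E_0)\) is spanned rationally by \(\Phi_M\). Since \(\ker\Phi_M\) is geometrically connected and \(\mathrm{id}\) generates \(\End_\Q(E_0)=\Z\), Corollary~\ref{cor:optimal-quotient-primitivity} shows that \(\Phi_M\) generates this group. The homomorphism \(\Phi_M\circ\iota_{r,*}\circ\iota_s^*\) lies in that group, so it equals \([a_{r,s}]\circ\Phi_M\) for a unique \(a_{r,s}\in\Z\); uniqueness uses torsion-freeness.

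\emph{Step 2: identify \(A_{\mathcal X,N}\) as a Gram matrix.} The maps \(\iota_r\) send \(P_N\) to \(P_M\), so \(\Phi_M\circ\iota_{r,*}\circ j_{P_N}=\pi_M\circ\iota_r\). The computation \eqref{eq:pairing-on-old-translates} then goes through verbatim, using \(\pi_{M,*}=\Phi_M\) and \(\Phi_M\circ\pi_M^*=[\deg\pi_M]\), and gives
\[
\langle\pi_M\circ\iota_r,\pi_M\circ\iota_s\rangle=a_{r,s}\deg\pi_M .
\]
Consequences:
\begin{itemize}
\item Symmetry: the degree pairing is symmetric, since it is the Rosati pairing of Definition~\ref{def:degree-pairing}. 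Equivalently, apply the dagger \(f_*g^*\mapsto g_*f^*\).
\item Positive definiteness: (T1) makes the old maps a \(\Q\)-basis, and by Proposition~\ref{prop:degree-pairing-basic} the pairing is positive definite on \(\Hom_\Q(J_{\mathcal X}(N),E_0)\), hence on its rationalization. So \(\deg\pi_M\cdot A_{\mathcal X,N}\) is positive definite, and therefore so is \(A_{\mathcal X,N}\).

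For \(u\ne\mathrm{id}\), the linear independence of the old maps with target \(E'\) transfers from those with target \(E_0\) by composing with \(u\) and \(\widehat u\).
\end{itemize}

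\emph{Step 3: the degree formula.} Composing with \(u\) multiplies the pairing by \(\deg u\), since \(u_*u^*=[\deg u]\). Because \(\deg(u\circ\pi_M)=\deg u\deg\pi_M\), bilinear expansion as in Proposition~\ref{prop:degree} gives
\[
\langle g,g\rangle=\deg(u\circ\pi_M)\,\frac{\boldsymbol b^{T}A_{\mathcal X,N}\boldsymbol b}{\nu^2}.
\]
Nonconstancy holds because \(G\ne0\) and the image of \(j_{P_N}\) generates the Jacobian, so \(g\) is nonconstant and \(\langle g,g\rangle=\deg g\).

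\emph{Step 4: the divisibility \(\nu\mid\boldsymbol b^{T}A_{\mathcal X,N}\boldsymbol b\).} Follow Lemma~\ref{lem:first-n}. By the definition of \(a_{r,s}\),
\[
G\circ\iota_s^*=\nu^{-1}(A_{\mathcal X,N}\boldsymbol b)_s\,u\circ\Phi_M .
\]
This lies in \(\Hom_\Q(J_{\mathcal X}(M),E')\), where \(u\circ\Phi_M\) is primitive by (T1). Hence \(\nu\mid(A_{\mathcal X,N}\boldsymbol b)_s\) for every \(s\), and so \(\nu\mid\boldsymbol b^{T}A_{\mathcal X,N}\boldsymbol b\). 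The symmetry from Step~2 is used here to identify \(\sum_r b_ra_{r,s}\) with the \(s\)-th coordinate.

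The main obstacle is Step~2: justifying symmetry and positive definiteness in the abstract tower setting. There is no Atkin--Lehner input here, so both must come purely from the Rosati description of the pairing together with (T1).
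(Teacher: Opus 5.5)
Your proposal is correct and follows essentially the same route as the paper. It proves integrality of $a_{r,s}$ via Corollary~\ref{cor:optimal-quotient-primitivity} at level $M$, and gets the Gram identity $\langle\pi_M\circ\iota_r,\pi_M\circ\iota_s\rangle=a_{r,s}\deg\pi_M$ with symmetry and positivity from Proposition~\ref{prop:degree-pairing-basic} plus the linear independence in (T1). It then picks up the factor $\deg u$ from $u_*u^*=[\deg u]$, and obtains $\nu\mid\boldsymbol b^{T}A_{\mathcal X,N}\boldsymbol b$ by composing with $\iota_s^*$ and using primitivity of $u\circ\Phi_M$.

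One small correction concerns Step~1. (T1) is assumed only for the fixed $E'$ and $u$, so you should not invoke it ``with $E'=E_0$''. Instead, deduce $\Hom_\Q(J_{\mathcal X}(M),E_0)\otimes_\Z\Q=\Q\Phi_M$, and the independence of the maps $\Phi_M\circ\iota_{r,*}$, from the $E'$-version by injectivity of $v\mapsto u\circ v$. This is exactly what the paper does, and your own $u$/$\widehat u$ transfer remark already gives it, in the direction from $E'$ to $E_0$.
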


\begin{proof}
At level \(M\), condition \hyperlink{tower-condition-T1}{\(\mathrm{(T1)}\)} and the injectivity of
postcomposition with the isogeny \(u\) imply
\begin{equation}\label{eq:tower-base-rational-span}
        \Hom_\Q(J_{\mathcal X}(M),E_0)\otimes_\Z\Q=\Q\Phi_M.
\end{equation}
In view of \eqref{eq:tower-base-rational-span} and the optimality of
\(\Phi_M\), Corollary \ref{cor:optimal-quotient-primitivity} applies with
target \(E_0\). Hence
\[
        \Hom_\Q(J_{\mathcal X}(M),E_0)=\Z\Phi_M.
\]
The integers \(a_{r,s}\) are therefore well defined.

Put \(H_r=\Phi_M\circ\iota_{r,*}\). Compatibility with the
chosen base points means that
\(H_r\circ j_{P_N}=\pi_M\circ\iota_r\). By the definition
of \(a_{r,s}\) and the push--pull identity,
\begin{align*}
 \langle H_r\circ j_{P_N},H_s\circ j_{P_N}\rangle
 &=\Phi_M\circ\iota_{r,*}\circ\iota_s^*\circ\pi_M^*\\
 &=[a_{r,s}]\circ\Phi_M\circ\pi_M^*
  =[a_{r,s}\deg\pi_M].
\end{align*}
The last equality uses \(\Phi_M\circ\pi_M^*=[\deg\pi_M]\).
Condition \hyperlink{tower-condition-T1}{\(\mathrm{(T1)}\)} says that the maps \(u\circ H_r\) are linearly
independent. Since \(u\) is an isogeny, the maps \(H_r\) are also linearly
independent. By Proposition \ref{prop:degree-pairing-basic},
\((\deg\pi_M)A_{\mathcal X,N}\) is their symmetric positive definite Gram
matrix.

Using the identity \(u_*u^*=[\deg u]\), we have
\begin{equation}\label{eq:tower-old-pairing}
 \langle u\circ H_r\circ j_{P_N},u\circ H_s\circ j_{P_N}\rangle
 =a_{r,s}\deg(u\circ\pi_M).
\end{equation}
Since the image of \(j_{P_N}\) generates \(J_{\mathcal X}(N)\), the assumption
\(G\ne0\) implies that \(g=G\circ j_{P_N}\) is nonconstant. Bilinearity of the
degree pairing and \eqref{eq:tower-old-pairing} yield
\[
 \deg g
 =\frac1{\nu^2}\sum_{r,s\mid R}b_rb_s
   \langle u\circ H_r\circ j_{P_N},u\circ H_s\circ j_{P_N}\rangle
 =\deg(u\circ\pi_M)
  \frac{\boldsymbol b^{T}A_{\mathcal X,N}\boldsymbol b}{\nu^2}.
\]

To prove the divisibility assertion, fix \(s\mid R\) and compose the expression
for \(G\) with \(\iota_s^*\). By symmetry of the matrix,
\[
        G\circ\iota_s^*
        =
        \frac{(A_{\mathcal X,N}\boldsymbol b)_s}{\nu}\,
        u\circ\Phi_M.
\]
The left-hand side is an integral homomorphism, whereas \(u\circ\Phi_M\) is
primitive by condition \hyperlink{tower-condition-T1}{\(\mathrm{(T1)}\)}. Thus every coordinate of
\(A_{\mathcal X,N}\boldsymbol b\) is divisible by \(\nu\). Multiplication on
the left by \(\boldsymbol b^{T}\) establishes the divisibility in
\eqref{eq:tower-degree-and-first-denominator}.
\end{proof}

Theorem \ref{thm:tower-denominator-criterion} and
\eqref{eq:tower-degree-and-first-denominator} imply the fixed-target
divisibility in Proposition \ref{prop:tower-degree-consequences}.

\begin{proposition}\label{prop:tower-degree-consequences}
Assume conditions \hyperlink{tower-condition-T1}{\(\mathrm{(T1)}\)}--\hyperlink{tower-condition-T3}{\(\mathrm{(T3)}\)}, and let \(N\) be a
multiple of \(M\). Then
every nonconstant morphism \(g:\mathcal X(N)\to E'\) over \(\Q\) satisfies
\begin{equation}\label{eq:tower-fixed-degree}
        \deg(u\circ\pi_M)
        \mid
        \mathcal D_{c_{\mathcal X,u}}(R)\deg g.
\end{equation}
\end{proposition}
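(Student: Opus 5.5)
The argument follows the proof of Proposition~\ref{prop:fixed-target-degree}, with Theorem~\ref{thm:denominator} replaced by Theorem~\ref{thm:tower-denominator-criterion} and Proposition~\ref{prop:degree} replaced by Proposition~\ref{prop:tower-degree-matrix}.

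First, we normalize \(g\). Translate \(g\) by \(-g(P_N)\); this changes neither its degree nor its nonconstancy, so we may assume \(g(P_N)=0\). By the universal property \eqref{eq:jacobian-universal-property}, \(g=G\circ j_{P_N}\) for a unique \(G\in\Hom_\Q(J_{\mathcal X}(N),E')\), and \(G\ne0\) because \(g\) is nonconstant. By condition \hyperlink{tower-condition-T1}{\(\mathrm{(T1)}\)}, we can write
\[
G=\nu^{-1}\sum_{r\mid R}b_r\Phi^{\mathcal X}_{r,u,N,M},\qquad b_r\in\Z,\ \nu\in\Z_{>0},\ \gcd(\nu,\{b_r\})=1 .
\]
Put \(\boldsymbol b=(b_r)_{r\mid R}\).

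Second, we bound \(\nu\). By \eqref{eq:tower-sharp-lattice} in Theorem~\ref{thm:tower-denominator-criterion}, \(\mathcal D_{c_{\mathcal X,u}}(R)\,b_r/\nu\in\Z\) for every \(r\mid R\). Since \(\nu\) is coprime to the full family \(\{b_r\}\), this forces \(\nu\mid\mathcal D_{c_{\mathcal X,u}}(R)\). Indeed, any prime power \(p^k\) dividing \(\nu\) but not \(\mathcal D_{c_{\mathcal X,u}}(R)\) would have to divide every \(b_r\), which the coprimality condition excludes.

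Third, we use the degree formula. Proposition~\ref{prop:tower-degree-matrix} gives
\[
\deg g=\deg(u\circ\pi_M)\frac{\boldsymbol b^{T}A_{\mathcal X,N}\boldsymbol b}{\nu^2}
\qquad\text{and}\qquad
\nu\mid\boldsymbol b^{T}A_{\mathcal X,N}\boldsymbol b .
\]
Hence
\[
\frac{\mathcal D_{c_{\mathcal X,u}}(R)\deg g}{\deg(u\circ\pi_M)}
=\frac{\mathcal D_{c_{\mathcal X,u}}(R)}{\nu}\cdot\frac{\boldsymbol b^{T}A_{\mathcal X,N}\boldsymbol b}{\nu}.
\]
Both factors on the right are integers, so the left side is an integer, which gives \eqref{eq:tower-fixed-degree}.

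The only point that needs care is the case \(R=1\). There \(\mathcal D_{c}(1)=1\), and Theorem~\ref{thm:tower-denominator-criterion} already gives integrality of \(x_1\) through primitivity, so the same argument applies with \(\nu=1\). I do not expect any other obstacle, since the substantive work is contained in Theorem~\ref{thm:tower-denominator-criterion} and Proposition~\ref{prop:tower-degree-matrix}.
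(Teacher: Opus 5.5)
Your proposal is correct and matches the paper's proof step for step: you translate so that \(g(P_N)=0\), take the reduced old-basis expansion, obtain \(\nu\mid\mathcal D_{c_{\mathcal X,u}}(R)\) from Theorem~\ref{thm:tower-denominator-criterion} plus coprimality, and combine this with \(\nu\mid\boldsymbol b^{T}A_{\mathcal X,N}\boldsymbol b\) and the degree formula of Proposition~\ref{prop:tower-degree-matrix}. One wording fix: in the coprimality step, a prime \(p\) with \(\ord_p(\nu)>\ord_p(\mathcal D_{c_{\mathcal X,u}}(R))\) only forces \(p\mid b_r\) for every \(r\), not \(p^k\mid b_r\), but that already contradicts \(\gcd(\nu,\{b_r\})=1\).
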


\begin{proof}
Translate \(g\) so that \(g(P_N)=0\), and let \(G\) be the associated
homomorphism. Write
\(G=\nu^{-1}\sum_{r\mid R}b_r\Phi^{\mathcal X}_{r,u,N,M}\),
where \(b_r\in\Z\), \(\nu\in\Z_{>0}\),
\(\gcd(\nu,\{b_r\}_{r\mid R})=1\), and
\(\boldsymbol b=(b_r)_{r\mid R}\).
Theorem \ref{thm:tower-denominator-criterion} and the coprimality of
\(\nu\) with the \(b_r\)'s imply
\(\nu\mid\mathcal D_{c_{\mathcal X,u}}(R)\). Equation
\eqref{eq:tower-degree-and-first-denominator} also implies
\(\nu\mid\boldsymbol b^{T}A_{\mathcal X,N}\boldsymbol b\). Hence
\[
        \frac{\mathcal D_{c_{\mathcal X,u}}(R)\deg g}{\deg(u\circ\pi_M)}
        =\frac{\mathcal D_{c_{\mathcal X,u}}(R)}{\nu}
        \frac{\boldsymbol b^{T}A_{\mathcal X,N}\boldsymbol b}{\nu}
        \in\Z,
\]
which is equivalent to \eqref{eq:tower-fixed-degree}.
\end{proof}

If \(c_{\mathcal X,u}=1\), then the old-basis coefficients are
integral, and \(A_{\mathcal X,N}\) determines the degree of the associated
morphism.

\begin{corollary}\label{cor:tower-integral-degree-form}
Assume conditions \hyperlink{tower-condition-T1}{\(\mathrm{(T1)}\)}--\hyperlink{tower-condition-T3}{\(\mathrm{(T3)}\)} and
\(c_{\mathcal X,u}=1\). Let \(N\) be a multiple of \(M\).
For every nonconstant morphism \(g:\mathcal X(N)\to E'\), its
associated homomorphism \(G_g\) based at \(P_N\) has a unique nonzero
coefficient vector \(\boldsymbol b=(b_r)_{r\mid R}\in
\Z^{\{r:\,r\mid R\}}\) satisfying
\[
        G_g=\sum_{r\mid R}b_r\Phi^{\mathcal X}_{r,u,N,M},
        \qquad
        \deg g=\deg(u\circ\pi_M)\,
        \boldsymbol b^{T}A_{\mathcal X,N}\boldsymbol b.
\]
\end{corollary}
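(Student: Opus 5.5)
The plan is to combine the integral-basis conclusion of Theorem~\ref{thm:tower-denominator-criterion} with the degree formula of Proposition~\ref{prop:tower-degree-matrix}, following the proof of Corollary~\ref{cor:fixed-target-degree-form} at level \(X_0\).

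\emph{Existence of \(\boldsymbol b\).} First translate \(g\) by \(-g(P_N)\); this changes neither the degree nor the associated homomorphism. Let \(G_g\in\Hom_\Q(J_{\mathcal X}(N),E')\) be the associated homomorphism based at \(P_N\), given by the universal property \eqref{eq:jacobian-universal-property}. Then \(g=t_{g(P_N)}\circ G_g\circ j_{P_N}\), and Lemma~\ref{lem:curve-jacobian} (its proof uses only translation, so it applies verbatim to \(\mathcal X(N)\)) gives \(\deg g=\deg(G_g\circ j_{P_N})\). Since \(g\) is nonconstant, \(G_g\neq0\).

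\emph{Uniqueness and integrality of \(\boldsymbol b\).} Because \(c_{\mathcal X,u}=1\), the last assertion of Theorem~\ref{thm:tower-denominator-criterion} says the level-\(N\) old maps \(\Phi^{\mathcal X}_{r,u,N,M}\) form a \(\Z\)-basis of \(\Hom_\Q(J_{\mathcal X}(N),E')\). Hence \(G_g=\sum_{r\mid R}b_r\Phi^{\mathcal X}_{r,u,N,M}\) for a unique integral vector \(\boldsymbol b\). Uniqueness comes from linear independence, already part of (T1). The vector is nonzero because \(G_g\neq0\).

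\emph{Degree formula.} Apply Proposition~\ref{prop:tower-degree-matrix} to \(G_g\) with \(\nu=1\). This gives
\[
\deg(G_g\circ j_{P_N})=\deg(u\circ\pi_M)\,\boldsymbol b^{T}A_{\mathcal X,N}\boldsymbol b.
\]
Combined with \(\deg g=\deg(G_g\circ j_{P_N})\) from the first step, this is the stated formula.

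There is no real obstacle; everything reduces to results already stated. The only point needing care is that the translation step and Lemma~\ref{lem:curve-jacobian} were phrased for \(X_0(N)\) with base point \(\infty\). They must be used with \(\mathcal X(N)\) and \(P_N\) instead, which is harmless because their proofs use only the universal property of the Jacobian and the fact that translations on \(E'\) are isomorphisms.
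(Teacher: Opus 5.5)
Your proposal is correct and matches the paper's proof. The integral expansion comes from the \(\Z\)-basis conclusion of Theorem~\ref{thm:tower-denominator-criterion} when \(c_{\mathcal X,u}=1\), uniqueness comes from the linear independence in (T1), and the degree formula is Proposition~\ref{prop:tower-degree-matrix} with \(\nu=1\); your explicit treatment of the translation by \(g(P_N)\) only spells out a step the paper leaves implicit.
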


\begin{proof}
By Theorem \ref{thm:tower-denominator-criterion}, \(G_g\) has the stated
integral expansion. Its uniqueness follows from \hyperlink{tower-condition-T1}{\(\mathrm{(T1)}\)}. The degree formula is
\eqref{eq:tower-degree-and-first-denominator} with \(\nu=1\).
\end{proof}

For an arbitrary isogenous target \(E''\), composition with a dual
isogeny reduces the denominator estimate to the target \(E_0\).

\begin{theorem}\label{thm:tower-optimal-degree}
Suppose that conditions \hyperlink{tower-condition-T1}{\(\mathrm{(T1)}\)}--\hyperlink{tower-condition-T3}{\(\mathrm{(T3)}\)} hold
with \(E'=E_0\) and \(u=\mathrm{id}_{E_0}\). Let
\(c_{\mathcal X}=c_{\mathcal X,\mathrm{id}_{E_0}}\). Let \(N\) be a multiple
of \(M\). Let \(E''/\Q\) be \(\Q\)-isogenous to \(E_0\). Then every nonconstant
morphism \(g:\mathcal X(N)\to E''\) over \(\Q\) satisfies
\begin{equation}\label{eq:tower-optimal-degree}
        \deg\pi_M\mid\mathcal D_{c_{\mathcal X}}(R)\deg g.
\end{equation}
Moreover,
\begin{equation}\label{eq:sharp-divides-omega}
        \mathcal D_{c_{\mathcal X}}(R)
        \mid c_{\mathcal X}^{\Omega(R)}.
\end{equation}
\end{theorem}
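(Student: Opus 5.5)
The plan is to copy the proof of Theorem \ref{thm:main}, with Theorem \ref{thm:tower-denominator-criterion} standing in for Theorem \ref{thm:denominator} and Proposition \ref{prop:tower-degree-matrix} standing in for Proposition \ref{prop:degree}.

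First I reduce to the target \(E_0\). Translate \(g\) so that \(g(P_N)=0\), and let \(G:J_{\mathcal X}(N)\to E''\) be the associated homomorphism. Choose a nonzero isogeny \(v:E_0\to E''\), for instance a generator of \(\Hom_\Q(E_0,E'')\), and let \(\widehat v\) be its dual. Then \(\widehat v\circ G\in\Hom_\Q(J_{\mathcal X}(N),E_0)\).

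By (T1) with target \(E_0\) and \(u=\mathrm{id}\), the old maps \(\Phi^{\mathcal X}_{r,\mathrm{id},N,M}\) form a \(\Q\)-basis. I do not assume (T1) for the target \(E''\). Instead I expand directly:
\[
\widehat v\circ G=\nu^{-1}\sum_{r\mid R} b_r\Phi^{\mathcal X}_{r,\mathrm{id},N,M},\qquad \gcd(\nu,\{b_r\})=1.
\]
Theorem \ref{thm:tower-denominator-criterion} then gives \(\nu\mid\mathcal D_{c_{\mathcal X}}(R)\), and Proposition \ref{prop:tower-degree-matrix} gives \(\nu\mid \boldsymbol b^T A\boldsymbol b\). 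The same proposition, applied to the \(E_0\)-valued homomorphism, gives
\[
\deg(\widehat v\circ g)=\deg\pi_M\,\boldsymbol b^TA\boldsymbol b/\nu^2.
\]
Since \(\deg(\widehat v\circ g)=\deg v\cdot\deg g\), this yields
\[
\mathcal D\deg v\deg g/\deg\pi_M=(\mathcal D/\nu)(\boldsymbol b^TA\boldsymbol b/\nu)\in\Z,
\]
that is, \(\deg\pi_M\mid \mathcal D\,\deg v\,\deg g\). This is weaker than the claim by the factor \(\deg v\).

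The main obstacle is removing this factor \(\deg v\). In the \(X_0\) proof this was done by expanding \(G\) itself in the basis \(u\circ\Phi_r\) and only then composing with \(\widehat u\). That gave \(\widehat u\circ G=(\deg u/\nu)\sum b_r\Phi_r\) with the same coprime \(\nu\), and the factor \(\deg u\) was absorbed into \(\nu\mid\mathcal D\deg u\) instead of appearing in the degree.

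I will mimic this. For the rational expansion of \(G\) in the old maps \(v\circ\Phi^{\mathcal X}_{r,\mathrm{id}}\), I need only that these span \(\Hom_\Q(J_{\mathcal X}(N),E'')\otimes\Q\). This follows from (T1) for \(E_0\), because postcomposition with \(v\) is an isomorphism after tensoring with \(\Q\), with inverse \((\deg v)^{-1}\widehat v\).

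So I write \(G=\nu^{-1}\sum b_r\, v\circ\Phi_r\) in lowest terms. Then \(\widehat v\circ G=(\deg v/\nu)\sum b_r\Phi_r\), and the denominator theorem for target \(E_0\) gives \(\nu\mid\mathcal D\deg v\).

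For \(\nu\mid\boldsymbol b^TA\boldsymbol b\), I compose \(G\) with \(\iota_s^*\). The composite \(G\circ\iota_s^*=((A\boldsymbol b)_s/\nu)\,v\circ\Phi_M\) is integral. By Corollary \ref{cor:optimal-quotient-primitivity}, \(v\circ\Phi_M\) is primitive when \(v\) generates \(\Hom_\Q(E_0,E'')\): its hypothesis \eqref{eq:optimal-quotient-rational-span} follows from \(R=1\) of the spanning just shown, and \(\ker\Phi_M\) is connected by optimality. Hence \(\nu\mid(A\boldsymbol b)_s\) for every \(s\).

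The degree formula is the same Gram computation as in Proposition \ref{prop:tower-degree-matrix}, with \(\deg(v\circ\pi_M)=\deg v\deg\pi_M\):
\[
\deg g=\deg\pi_M\,\deg v\,\boldsymbol b^TA\boldsymbol b/\nu^2.
\]
Therefore
\[
\mathcal D\deg g/\deg\pi_M=(\mathcal D\deg v/\nu)(\boldsymbol b^TA\boldsymbol b/\nu)\in\Z,
\]
which proves \eqref{eq:tower-optimal-degree}.

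For \eqref{eq:sharp-divides-omega}: if \(R=1\), both sides equal \(1\). If \(R>1\), fix a prime \(\ell\mid c_{\mathcal X}\) with \(a=\ord_\ell(c_{\mathcal X})\ge1\) and \(m=\ord_\ell(R)\). It suffices to check
\[
a+\lfloor m/2\rfloor\le a\,\Omega(R).
\]
If \(m=0\), this is \(a\le a\Omega(R)\), true since \(\Omega(R)\ge1\). If \(m\ge1\), then \(a+\lfloor m/2\rfloor\le a+a(m-1)=am\le a\Omega(R)\), using \(\lfloor m/2\rfloor\le m-1\) and \(a\ge1\).
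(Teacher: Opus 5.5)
Your proposal is correct and, setting aside the first attempt that you rightly discard, follows the paper's proof essentially step for step. The paper also takes a generator \(v:E_0\to E''\) and verifies \(\mathrm{(T1)}\) for \(E''\) via Corollary~\ref{cor:optimal-quotient-primitivity}. It expands \(G\) in the basis \(v\circ\Phi_M\circ\iota^{\mathcal X}_{r,N,M,*}\), obtains \(\nu\mid\boldsymbol b^{T}A_{\mathcal X,N}\boldsymbol b\) from Proposition~\ref{prop:tower-degree-matrix}, and obtains \(\nu\mid\mathcal D_{c_{\mathcal X}}(R)\deg v\) by composing with \(\widehat v\) and applying Theorem~\ref{thm:tower-denominator-criterion} with target \(E_0\); it then concludes with the same factorization and the same exponent inequality \(a+\lfloor m/2\rfloor\le am\le a\,\Omega(R)\).
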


\begin{proof}
Fix a morphism \(g:\mathcal X(N)\to E''\) as in the theorem, and choose a
generator \(v:E_0\to E''\).
Let \(\widehat v:E''\to E_0\) be the dual isogeny.
Postcomposition with \(v\) induces an isomorphism
\[
 \Hom_\Q(J_{\mathcal X}(N),E_0)\otimes_\Z\Q
 \xrightarrow{\ \sim\ }
 \Hom_\Q(J_{\mathcal X}(N),E'')\otimes_\Z\Q,
\]
since
composition with \(\widehat v\) is its inverse up to multiplication by
\(\deg v\). It carries the rational old basis for the target \(E_0\)
to the rational old basis for \(E''\). At level \(M\), Corollary
\ref{cor:optimal-quotient-primitivity}, applied to the optimal quotient
\(\Phi_M\), shows that \(v\circ\Phi_M\) is primitive.
Thus condition \hyperlink{tower-condition-T1}{\(\mathrm{(T1)}\)} holds for the target \(E''\) and
the generator \(v\).
Translate \(g\) so that \(g(P_N)=0\), and let \(G\) be its associated
homomorphism. Write the reduced old-basis expansion as
\[
        G=\frac1\nu\sum_{r\mid R}b_r
        \Phi^{\mathcal X}_{r,v,N,M},
        \qquad
        \gcd(\nu,\{b_r\}_{r\mid R})=1.
\]
Here \(b_r\in\Z\), \(\nu\in\Z_{>0}\), and
\(\boldsymbol b=(b_r)_{r\mid R}\).
By Proposition \ref{prop:tower-degree-matrix},
\(\nu\mid\boldsymbol b^{T}A_{\mathcal X,N}\boldsymbol b\).
Compose \(G\) with \(\widehat v\). Since
\(\widehat v\circ v=[\deg v]\), the reduced denominator in the old basis for
the target \(E_0\) is \(\nu/\gcd(\nu,\deg v)\). Indeed, the coefficients are
\((\deg v)b_r/\nu\), and
\(\gcd(\nu,\{b_r\}_{r\mid R})=1\). Theorem
\ref{thm:tower-denominator-criterion}, applied with target \(E_0\) and
isogeny \(\mathrm{id}_{E_0}\), implies
\(\nu\mid\mathcal D_{c_{\mathcal X}}(R)\deg v\). Then
\[
        \frac{\mathcal D_{c_{\mathcal X}}(R)\deg g}{\deg\pi_M}
        =
        \left(\frac{\mathcal D_{c_{\mathcal X}}(R)\deg v}{\nu}\right)
        \left(\frac{\boldsymbol b^{T}A_{\mathcal X,N}
        \boldsymbol b}{\nu}\right)
        \in\Z.
\]
The first factor on the right is integral by Theorem
\ref{thm:tower-denominator-criterion}, and the second by Proposition
\ref{prop:tower-degree-matrix}.
This establishes \eqref{eq:tower-optimal-degree}. When \(R=1\),
\eqref{eq:sharp-divides-omega} follows from
\(\mathcal D_{c_{\mathcal X}}(1)=1\) and \(\Omega(1)=0\). Suppose \(R>1\).
Fix a prime \(\ell\mid c_{\mathcal X}\), and put
\(a=\ord_\ell(c_{\mathcal X})\) and \(m=\ord_\ell(R)\). If \(m=0\), then
\(a\le a\Omega(R)\), since \(\Omega(R)\ge1\). If \(m\ge1\), then
\(a+\lfloor m/2\rfloor\le am\le a\Omega(R)\).
Taking these inequalities over all primes \(\ell\mid c_{\mathcal X}\) establishes
\eqref{eq:sharp-divides-omega}.
\end{proof}

\subsection{Compatible diamond towers}\label{subsec:compatible-diamond-towers}

For compatible diamond towers, the oldform decomposition with trivial
nebentypus and Corollary \ref{cor:optimal-quotient-primitivity} establish
\hyperlink{tower-condition-T1}{\(\mathrm{(T1)}\)}. The two triangular \(q\)-expansion arguments used for
\hyperlink{tower-condition-T2}{\(\mathrm{(T2)}\)} are carried out on \(\mathcal X_\mu\), with Galois descent for
the Fricke automorphism. Condition \hyperlink{tower-condition-T3}{\(\mathrm{(T3)}\)} is the main departure from
the \(X_0\) argument. An intermediate degeneracy map need not be totally
ramified, so Proposition \ref{prop:connected-degeneracy-kernel} does not apply.
Surjectivity on topological fundamental groups implies surjectivity on
integral homology. Analytic uniformization then shows that the induced
pushforward map on Jacobians is surjective with geometrically connected kernel.
Equation \eqref{eq:tower-map-compatibility} is the composition identity required
in \hyperlink{tower-condition-T3}{\(\mathrm{(T3)}\)}.

For a positive integer \(L\), let \(\Delta_L\) be a subgroup of
\((\Z/L\Z)^\times\) containing \(-1\), and put
\[
        \Gamma_{\Delta_L}(L)=
        \left\{
        \begin{pmatrix}a&b\\c&d\end{pmatrix}\in\Gamma_0(L):
        a\bmod L\in\Delta_L
        \right\}.
\]
Let \(X_{\Delta_L}(L)\) be the smooth projective modular curve over \(\Q\)
associated with \(\Gamma_{\Delta_L}(L)\), and let \(J_{\Delta_L}(L)\) be its
Jacobian. We call a family
\(\boldsymbol\Delta=(\Delta_L)_{M\mid L}\) a
\emph{compatible diamond tower} if, whenever \(L\mid L'\), reduction induces a
surjection
\begin{equation}\label{eq:diamond-surjectivity}
        \Delta_{L'}\twoheadrightarrow\Delta_L.
\end{equation}
The choices \(\Delta_L=(\Z/L\Z)^\times\) and \(\Delta_L=\{\pm1\}\) correspond to
\(X_0(L)\) and the coarse modular curve \(X_1(L)\), respectively. See
\cite{IshiiMomose} for this convention for intermediate modular
curves and their diamond coverings of \(X_0(L)\).
The inclusions
\(\Gamma_1(L)\subseteq\Gamma_{\Delta_L}(L)\subseteq\Gamma_0(L)\) induce the
finite quotient maps
\begin{equation}\label{eq:intermediate-quotient-chain}
        X_1(L)\longrightarrow X_{\Delta_L}(L)\longrightarrow X_0(L).
\end{equation}

The usual degeneracy maps on \(X_1\) are equivariant for the diamond
actions via the reduction homomorphism
\[
        (\Z/N\Z)^\times\longrightarrow(\Z/L\Z)^\times.
\]
Consequently, for \(r\mid N/L\), the corresponding degeneracy map on \(X_1\)
descends over \(\Q\) to the finite morphism
\[
        \iota^{\boldsymbol\Delta}_{r,N,L}:X_{\Delta_N}(N)\longrightarrow
        X_{\Delta_L}(L),
\]
analytically induced by \(\tau\mapsto r\tau\).
Equivalently, conjugation by
\(\left(\begin{smallmatrix}r&0\\0&1\end{smallmatrix}\right)\) maps
\(\Gamma_{\Delta_N}(N)\) into \(\Gamma_{\Delta_L}(L)\).
In this inclusion, the lower-left entry is divisible by \(L\), and the
upper-left entry belongs to \(\Delta_L\) by
\eqref{eq:diamond-surjectivity}. The algebraic degeneracy maps and their
diamond equivariance follow from the modular interpretation
\cite{DiamondShurman}.
The maps are compatible under composition in the sense of
\eqref{eq:tower-map-compatibility}. Write
\(\iota^{(1)}_{r,N,L}\) and \(\iota^{(0)}_{r,N,L}\) for the usual degeneracy
maps on \(X_1\) and \(X_0\), respectively, and use analogous notation at other
levels.

For \(M\mid L\mid N\), \(r\mid N/L\), and \(s\mid L/M\), the following
diagram commutes.
\[
\begin{tikzcd}[column sep=large,row sep=large]
X_1(N)
  \arrow[r,"\iota^{(1)}_{r,N,L}"]
  \arrow[d,two heads]
& X_1(L)
  \arrow[r,"\iota^{(1)}_{s,L,M}"]
  \arrow[d,two heads]
& X_1(M)
  \arrow[d,two heads] \\
X_{\Delta_N}(N)
  \arrow[r,"\iota^{\boldsymbol\Delta}_{r,N,L}"]
  \arrow[d,two heads]
& X_{\Delta_L}(L)
  \arrow[r,"\iota^{\boldsymbol\Delta}_{s,L,M}"]
  \arrow[d,two heads]
& X_{\Delta_M}(M)
  \arrow[d,two heads] \\
X_0(N)
  \arrow[r,"\iota^{(0)}_{r,N,L}"]
& X_0(L)
  \arrow[r,"\iota^{(0)}_{s,L,M}"]
& X_0(M).
\end{tikzcd}
\]
The vertical arrows are the finite surjective quotient maps in
\eqref{eq:intermediate-quotient-chain}. By
\eqref{eq:tower-map-compatibility}, each horizontal composite is the
degeneracy map indexed by \(rs\) from level \(N\) to level \(M\).
For the remainder of this subsection, write
\(\iota_{r,N,L}=\iota^{\boldsymbol\Delta}_{r,N,L}\), and use the same
abbreviation for the induced pullbacks and pushforwards.

For the integral \(q\)-expansion arguments in Lemmas
\ref{lem:intermediate-old-hom} and \ref{lem:intermediate-two-cusp}, we use the
smooth, not necessarily proper, \(\Z\)-curve \(\mathcal X_\mu(L)\). Its
construction combines results of Katz--Mazur and Deligne--Rapoport, as
explained in \cite[Sections~9.3 and~12.3]{DiamondIm}. Its generic fiber is
\(X_1(L)\), and it has a canonical
point \(\infty\in\mathcal X_\mu(L)(\Z)\) whose formal completion is canonically
isomorphic to \(\operatorname{Spf}(\Z[[q]])\). Under the moduli-theoretic
identification \(\mathcal X_\mu(L)_\Q\simeq X_1(L)\), this point corresponds to
the standard analytic cusp \(\infty\), and its \(q\)-adic expansion agrees with
the standard analytic \(q\)-expansion on \(\Gamma_1(L)\)
\cite[\S6.1.2]{ConradEdixhovenStein}. Its image in
\(X_{\Delta_L}(L)\) is a
\(\Q\)-rational cusp, again denoted by \(\infty\). Since \(\tau\mapsto r\tau\)
sends analytic \(\infty\) to \(\infty\), the degeneracy maps in the tower
preserve these chosen cusps. When the level is clear, write
\(j_\infty:X_{\Delta_L}(L)\to J_{\Delta_L}(L)\) for the Abel--Jacobi map based
at this cusp.
Throughout this subsection, \(X_1(L)\) and its intermediate quotients are
understood in the \(\mathcal X_\mu(L)\)-model, under whose dual-isogeny
isomorphism with the usual point-level model the rational cusp denoted by
\(\infty\) corresponds to the cusp \(0\).

Fix a normalized rational newform
\(f(q)=\sum_{n\ge1}a_nq^n\in S_2(\Gamma_0(M))\) of exact level \(M\), which has
trivial nebentypus. Its coefficients
\(a_n\) are integers, since the coefficients of a normalized newform are
algebraic integers \cite{DiamondShurman}. Let
\(\Phi_{\boldsymbol\Delta}:J_{\Delta_M}(M)\twoheadrightarrow
E_{\boldsymbol\Delta}\) be the optimal elliptic quotient attached to \(f\), and let
\(\pi_{\boldsymbol\Delta}:X_{\Delta_M}(M)\to E_{\boldsymbol\Delta}\) be the
parametrization based at \(\infty\). See
\cite{DiamondShurman} for the newform quotient construction. Let
\(E'/\Q\) be in the \(\Q\)-isogeny class of \(E_{\boldsymbol\Delta}\), and choose
a generator \(u:E_{\boldsymbol\Delta}\to E'\). For every
multiple \(N\) of \(M\) and every \(r\mid N/M\), put
\(\Phi^{\boldsymbol\Delta}_{r,u,N,M}
=u\circ\Phi_{\boldsymbol\Delta}\circ\iota_{r,N,M,*}\).
Choose the sign of a minimal N\'eron differential \(\omega_{E'}\) so that
\begin{equation}\label{eq:intermediate-manin}
        (u\circ\pi_{\boldsymbol\Delta})^*\omega_{E'}
        =
        c_{u\circ\pi_{\boldsymbol\Delta}}f(q)\frac{dq}{q},
        \qquad c_{u\circ\pi_{\boldsymbol\Delta}}>0.
\end{equation}

\begin{lemma}\label{lem:intermediate-old-hom}
The scalar \(c_{u\circ\pi_{\boldsymbol\Delta}}\) in
\eqref{eq:intermediate-manin}
is a positive integer. For every multiple \(N\) of \(M\), the maps
\(\{\Phi^{\boldsymbol\Delta}_{r,u,N,M}\}_{r\mid N/M}\) form a \(\Q\)-basis of
\(\Hom_\Q(J_{\Delta_N}(N),E')\otimes_\Z\Q\), and
\(u\circ\Phi_{\boldsymbol\Delta}\) is primitive at level \(M\).
\end{lemma}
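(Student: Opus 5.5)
The lemma has three assertions: integrality of the Manin-type scalar, the rational old basis at level \(N\), and primitivity at level \(M\). I would prove them in that order, reusing the \(X_0\) arguments wherever possible.

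\emph{Integrality.} The differential \((u\circ\pi_{\boldsymbol\Delta})^*\omega_{E'}\) is regular on \(X_{\Delta_M}(M)\). Pulling it back along \(X_1(M)\to X_{\Delta_M}(M)\) gives a differential coming from a \(\Q\)-morphism \(X_1(M)\to E'\). By the N\'eron mapping property on the smooth \(\Z\)-curve \(\mathcal X_\mu(M)\), as in Lemma~\ref{lem:q-integral}, its \(q\)-expansion at \(\infty\) lies in \(\Z[[q]]\,dq/q\). The \(\mathcal X_\mu\)-model has standard \(q\)-expansion at \(\infty\) and the quotient map sends \(\infty\) to \(\infty\), so this expansion is \(c_{u\circ\pi_{\boldsymbol\Delta}}f(q)\,dq/q\). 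Since \(a_1=1\), the scalar is an integer, and it is positive by the choice of sign. A priori the scalar is only a nonzero rational, because the pullback is a weight-two form proportional to \(f\) by the Hecke/newform description of the optimal quotient; the integrality argument then pins it down.

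\emph{Rational basis.} I would mimic Theorem~\ref{thm:rational-old-basis}. The space \(S_2(\Gamma_{\Delta_N}(N))\) decomposes over nebentypus characters \(\chi\) of \((\Z/N\Z)^\times\) trivial on \(\Delta_N\). Up to isogeny, \(J_{\Delta_N}(N)\) is then a product of Galois-orbit factors \(A_g\), where \(g\) runs over newforms of level \(L\mid N\) with such characters, each appearing with multiplicity \(\sigma_0(N/L)\) via the pullbacks \(\iota^*_{r,N,L}\). A factor \(A_g\) admits a nonzero \(\Q\)-map to \(E'\) only if, by Faltings' isogeny theorem, \(A_g\) has an isogeny factor isogenous to \(E'\). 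Comparing \(L\)-functions via Eichler--Shimura and strong multiplicity one then forces \(g=f\), with trivial character and \(L=M\). By multiplicity one at level \(M\), \(\Hom_\Q(J_{\Delta_M}(M),E')\otimes\Q\) is spanned by \(u\circ\Phi_{\boldsymbol\Delta}\). Dualizing the Atkin--Lehner--Li isogeny as before, so that pullbacks become pushforwards, shows that the \(\Phi^{\boldsymbol\Delta}_{r,u,N,M}\) with \(r\mid N/M\) form a \(\Q\)-basis. Independence follows from independence of the forms \(f(q^r)\), since pullback of \(\omega_{E'}\) is a linear map to differentials sending \(\Phi^{\boldsymbol\Delta}_{r,u,N,M}\) to \(c\,\eta_r\).

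\emph{Primitivity, and the main obstacle.} Primitivity follows from Corollary~\ref{cor:optimal-quotient-primitivity}. Its inputs are that \(\Phi_{\boldsymbol\Delta}\) has geometrically connected kernel, which is the definition of the optimal quotient, and the rank-one rational span established at \(N=M\). The step I expect to need the most care is the decomposition of \(J_{\Delta_N}(N)\) in the rational-basis argument. One must check that non-trivial characters and twisted newforms (for example \(f\otimes\chi\) at other levels) never yield factors \(\Q\)-isogenous to \(E'\). I would handle this by noting that \(E'\) has trivial-character \(L\)-function equal to \(L(f,s)\), and that the Galois representation of \(A_g\) determines \(g\) up to Galois conjugacy.
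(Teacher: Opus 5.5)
Your proposal is correct and matches the paper's proof in all three parts. Integrality comes from the N\'eron mapping property on \(\mathcal X_\mu(M)\) together with \(a_1=1\); the rational basis comes from Faltings plus the Atkin--Lehner--Li oldform decomposition, dualized so that pullbacks become pushforwards; and primitivity comes from Corollary~\ref{cor:optimal-quotient-primitivity} applied to the rank-one span at level \(M\). The only difference is in how the \(f\)-isotypic part is isolated: you decompose \(S_2(\Gamma_{\Delta_N}(N))\) over nebentypus characters and rule out other newforms by comparing Galois representations, whereas the paper observes that the \(f\)-isotypic summand \(\bigoplus_{r\mid N/M}\Q\,f(q^r)\) of \(S_2(\Gamma_1(N))\) already lies in \(S_2(\Gamma_0(N))\subseteq S_2(\Gamma_{\Delta_N}(N))\).
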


\begin{proof}
Let \(S_2(\Gamma_1(N))_f\) denote the \(f\)-isotypic summand.
By the Atkin--Lehner--Li decomposition \cite{Li},
\begin{equation}\label{eq:intermediate-f-oldspace}
        S_2(\Gamma_1(N))_f={}
        \bigoplus_{r\mid N/M}\Q\,f(q^r).
\end{equation}
See also \cite{DiamondShurman} for the oldform multiplicities. Because
\(f\) has trivial nebentypus, every summand in
\eqref{eq:intermediate-f-oldspace} lies in \(S_2(\Gamma_0(N),\Q)\). The
inclusions
\[
 S_2(\Gamma_0(N))\subseteq S_2(\Gamma_{\Delta_N}(N))
 \subseteq S_2(\Gamma_1(N))
\]
therefore show that the \(f\)-isotypic summand for
\(\Gamma_{\Delta_N}(N)\) is precisely the space in
\eqref{eq:intermediate-f-oldspace}.

By Faltings' isogeny theorem \cite{Faltings}, only the \(f\)-isotypic quotient
of \(J_{\Delta_N}(N)\) contributes to
\(\Hom_\Q(J_{\Delta_N}(N),E')\otimes_\Z\Q\). Pullback on differentials by the
\(r\)-th degeneracy map has image the line spanned by \(f(q^r)dq/q\), up to the
nonzero normalization factor \(r\). Under the canonical polarizations, the
dual of this pullback is the pushforward
\(\iota_{r,N,M,*}\)
\cite{ConradEdixhovenStein}. Hence the direct sum in
\eqref{eq:intermediate-f-oldspace} corresponds to the maps
\(\Phi^{\boldsymbol\Delta}_{r,u,N,M}\), which form a \(\Q\)-basis of
\(\Hom_\Q(J_{\Delta_N}(N),E')\otimes_\Z\Q\). At \(N=M\), the basis identity becomes
\[
 \Hom_\Q(J_{\Delta_M}(M),E')\otimes_\Z\Q
 =\Q\,(u\circ\Phi_{\boldsymbol\Delta}).
\]
Since \(\Phi_{\boldsymbol\Delta}\) is optimal, Corollary
\ref{cor:optimal-quotient-primitivity} implies that
\(u\circ\Phi_{\boldsymbol\Delta}\) is primitive.

For the integrality of \(c_{u\circ\pi_{\boldsymbol\Delta}}\), compose
\(u\circ\pi_{\boldsymbol\Delta}\) with
\(\mathcal X_\mu(M)_\Q\to X_{\Delta_M}(M)\). The N\'eron mapping property
\cite{BLR}, applied to the smooth model
\(\mathcal X_\mu(M)\), extends this morphism from the generic fiber of
\(\mathcal X_\mu(M)\) to the N\'eron model of \(E'\). The extended morphism
pulls \(\omega_{E'}\) back to an element of \(\Z[[q]]\,dq\). Equivalently, its coefficient series
relative to \(dq/q\) lies in \(q\Z[[q]]\). The coefficient of \(q\) is
\(c_{u\circ\pi_{\boldsymbol\Delta}}\), since \(a_1=1\). Hence the scalar is an
integer. Its positivity follows from the choice
of sign in \eqref{eq:intermediate-manin}.
\end{proof}

Lemma \ref{lem:intermediate-old-hom} verifies condition
\hyperlink{tower-condition-T1}{\(\mathrm{(T1)}\)} for the compatible diamond tower. Proposition
\ref{prop:tower-degree-matrix} therefore applies with
\(\mathcal X(L)=X_{\Delta_L}(L)\),
\(\Phi_M=\Phi_{\boldsymbol\Delta}\), and
\(\pi_M=\pi_{\boldsymbol\Delta}\). For every multiple \(N\) of \(M\), let
\(A_{\boldsymbol\Delta,N}\) be the matrix indexed by the divisors of \(N/M\)
and characterized by
\begin{equation}\label{eq:intermediate-degree-matrix}
        \Phi_{\boldsymbol\Delta}\circ
        \iota_{r,N,M,*}\circ
        \iota_{s,N,M}^*
        =
        \bigl[(A_{\boldsymbol\Delta,N})_{r,s}\bigr]\circ\Phi_{\boldsymbol\Delta}.
\end{equation}
The matrix \(A_{\boldsymbol\Delta,N}\) is symmetric, positive definite, and
integral, and
\eqref{eq:tower-degree-and-first-denominator} holds with
\(A_{\mathcal X,N}=A_{\boldsymbol\Delta,N}\).

For intermediate modular curves, verification of condition
\hyperlink{tower-condition-T2}{\(\mathrm{(T2)}\)} requires Galois descent because the Fricke automorphism need
not be defined over \(\Q\).

\begin{lemma}\label{lem:intermediate-two-cusp}
Let \(N\) be a multiple of \(M\), let \(\ell\) be a prime, and suppose
\[
        G=\sum_{r\mid R}x_r\Phi^{\boldsymbol\Delta}_{r,u,N,M}
        \in\Hom_\Q(J_{\Delta_N}(N),E')\otimes_\Z\Z_{(\ell)}.
\]
Then
\begin{equation}\label{eq:intermediate-two-cusp}
        \gcd(r,R/r)c_{u\circ\pi_{\boldsymbol\Delta}}x_r\in\Z_{(\ell)}
        \qquad(r\mid R).
\end{equation}
\end{lemma}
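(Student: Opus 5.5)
We follow the proof of Lemma~\ref{lem:two-cusp}, working on the smooth model \(\mathcal X_\mu(N)\) and replacing the \(\Q\)-rational Fricke involution by a Galois-descent argument. First we clear denominators prime to \(\ell\): choose \(n\) prime to \(\ell\) with \(nG\in\Hom_\Q(J_{\Delta_N}(N),E')\). Let \(\rho:\mathcal X_\mu(N)_\Q\simeq X_1(N)\to X_{\Delta_N}(N)\) be the quotient map sending \(\infty\) to \(\infty\). Put \(g=nG\circ j_\infty\circ\rho\). By the compatibility of the degeneracy maps with \(\rho\) and with the base cusps, together with \eqref{eq:intermediate-manin} and Lemma~\ref{lem:q-expansion-degeneracy}, we get
\[
g^*\omega_{E'}=nc\sum_{r\mid R}x_r\eta_r,\qquad \eta_r=r f(q^r)\frac{dq}{q},\qquad c=c_{u\circ\pi_{\boldsymbol\Delta}}.
\]
The N\'eron mapping property applied to the smooth \(\Z\)-scheme \(\mathcal X_\mu(N)\), with completion \(\operatorname{Spf}\Z[[q]]\) at \(\infty\), shows that this \(q\)-expansion has integral coefficients. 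The triangular induction on divisors ordered by size, using \(a_1=1\) and \(a_n\in\Z\), then gives \(r\,nc\,x_r\in\Z\), and hence \(rcx_r\in\Z_{(\ell)}\).

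For the second bound, we use the Fricke involution \(w_N\) on \(X_1(N)\). It normalizes \(\Gamma_{\Delta_N}(N)\) up to the diamond action, so it induces an automorphism of \(X_{\Delta_N}(N)\). In general this automorphism is defined only over \(\Q(\zeta_N)\); this is the Galois-descent issue. We compose \(g\) over \(K=\Q(\zeta_N)\) with \(w_N\). On the model \(\mathcal X_\mu(N)\), the cusp \(w_N(\infty)\) corresponds to the \(\Z[\zeta_N]\)-point at \(0\), over which the Tate curve with its \(\mu_N\)-structure exchanged gives an integral formal parameter. Thus we plan to apply the N\'eron mapping property over \(\Z[\zeta_N]\) (or over \(\Z[\zeta_N,1/N]\) together with the \(q\)-expansion principle) to \(g\circ w_N\) at \(\infty\). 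This should show that the expansion of \(w_N^*g^*\omega_{E'}\) lies in \(\Z[\zeta_N][[q]]\,dq\).

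Because \(f\) has trivial character and Atkin--Lehner sign \(\lambda\), the computation in \eqref{eq:fricke} carries over verbatim: \(w_N^*\eta_r=\lambda\eta_{R/r}\), possibly up to a root-of-unity normalization that is harmless for integrality. The resulting series therefore has rational coefficients. A rational number lying in \(\Z[\zeta_N]\) lies in \(\Z\), so the expansion is actually in \(\Z[[q]]\); this is the descent step. The triangular argument then gives \((R/r)\,nc\,x_r\in\Z\). Combining this with the first bound by B\'ezout yields \(\gcd(r,R/r)cx_r\in\Z_{(\ell)}\), since \(n\) is an \(\ell\)-adic unit.

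The main obstacle is the integrality at the Fricke-translated cusp. We must identify \(w_N\) on the \(\mathcal X_\mu\)-model, or pass through the dual-isogeny isomorphism with the point-level model \(\mathcal X_1(N)\), and ensure that the translated cusp admits a smooth integral neighborhood with parameter \(q\) over \(\Z[\zeta_N]\). If the cleanest route is to use \(\mathcal X_\mu(N)\) at \(\infty\) and the point-level model \(\mathcal X_1(N)\) at the cusp \(\infty\) there, with \(w_N\) interchanging the two models over \(\Z[\zeta_N]\), we will argue that way. Either way, rationality of the final coefficients removes the dependence on \(\zeta_N\).
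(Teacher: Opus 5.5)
Your first-cusp step matches the paper's. The second-cusp integrality, which you yourself flag as the main obstacle, is not established, and the mechanism you propose does not deliver it. The smoothness of the source is not the issue: you look at $g\circ w_N$ near $\infty$, where $\mathcal X_\mu(N)\otimes\Z[\zeta_N]$ is already smooth. The issue is the target. Over $\Z[\zeta_N]$, the N\'eron mapping property sends this neighbourhood into the N\'eron model of $E'_K$, not into $\mathcal E'\otimes\Z[\zeta_N]$. N\'eron models do not commute with ramified base change. At primes $p\mid N$ where $E'$ has additive reduction that improves over $\Q_p(\zeta_N)$, the $\Z$-minimal differential $\omega_{E'}$ is a non-integral multiple of the N\'eron differential over $\Z[\zeta_N]$.

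Here is an example. The twist $Y^2=X^3-16X-16$ of $y^2+y=x^3-x$ by $\Q(i)$ is minimal at $2$ with $v_2(\Delta)=12$, and it becomes good over $\Q_2(i)$. There $dX/2Y$ equals a unit times $\tfrac12$ of the good-model differential. So for $16\mid N$ you would only get $w_N^*g^*\omega_{E'}\in\tfrac12\Z[\zeta_N][[q]]\,dq$, and rationality cannot remove the $\tfrac12$. Working over $\Z[\zeta_N,1/N]$ avoids this, but it loses exactly the primes dividing $N$, which are the ones that matter. The route through the point-level model at the other cusp is only sketched.

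The missing idea, which is how the paper argues, is to use rationality before integrality, and to apply it to the homomorphism rather than to its coefficients. Put $\mathcal W=(nG)_K\circ w_{N,*}$ with $K=\Q(\zeta_N)^{\Delta_N}$. You already observe that $\mathcal W^*\omega_{E'}=\pm nc\sum_{s\mid R}x_{R/s}\eta_s$ has rational coefficients, so it is fixed by $\operatorname{Gal}(K/\Q)$. In characteristic zero, a homomorphism $J_{\Delta_N}(N)_K\to E'_K$ is determined by its pullback of $\omega_{E'}$. Hence ${}^\sigma\mathcal W=\mathcal W$ for every $\sigma$, and $\mathcal W$ descends to $\Q$ by Galois descent. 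Then $\mathcal W\circ j_\infty$ is a $\Q$-morphism. Your first-cusp N\'eron argument over $\Z$, at the canonical $\Z$-point $\infty$ of $\mathcal X_\mu(N)$, gives an integral expansion and hence $(R/r)ncx_r\in\Z$. This needs no analysis of the cusp $0$, of $\Z[\zeta_N]$-models, or of the point-level model. (Note also that the Fricke relation holds up to the sign $\lambda$; a non-real root-of-unity factor would break your rationality claim.)
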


\begin{proof}
Choose an integer \(t\) prime to \(\ell\) such that
\(tG\in\Hom_\Q(J_{\Delta_N}(N),E')\). Compose the curve map
\(tG\circ j_\infty:X_{\Delta_N}(N)\to E'\) with
\(\mathcal X_\mu(N)_\Q\to X_{\Delta_N}(N)\). The N\'eron-model argument in the
proof of Lemma \ref{lem:intermediate-old-hom}, applied at level \(N\), shows
that the coefficient series relative to \(dq/q\) lies in \(q\Z[[q]]\). The
triangular argument of Lemma \ref{lem:two-cusp} yields
\begin{equation}\label{eq:intermediate-first-triangular}
        r\,t c_{u\circ\pi_{\boldsymbol\Delta}}x_r\in\Z.
\end{equation}

The argument at the second cusp requires Galois descent. The Fricke matrix
\(w_N=\left(\begin{smallmatrix}0&-1\\N&0\end{smallmatrix}\right)\) normalizes
\(\Gamma_{\Delta_N}(N)\). Indeed, conjugation interchanges the two
diagonal entries modulo \(N\), and these entries are inverse to one another.
Fix a primitive \(N\)-th root of unity \(\zeta_N\).
The induced modular automorphism is defined over
\(K=\Q(\zeta_N)^{\Delta_N}\)
\cite[Lemma~1.8]{JeonKimSchweizer}. For \(X_0(N)\), one has \(K=\Q\), whereas
for a general intermediate curve this equality need not hold. In every case,
\(K/\Q\) is finite Galois.
Put
\(\mathcal W=(tG)_K\circ w_{N,*}:J_{\Delta_N}(N)_K\to E'_K\).
Pullback by a translation fixes invariant differentials.
By the calculation in \eqref{eq:fricke}, the pullback is, up to sign,
\begin{equation}\label{eq:intermediate-fricke-pullback}
        t c_{u\circ\pi_{\boldsymbol\Delta}}
        \sum_{s\mid R}s x_{R/s}f(q^s)\frac{dq}{q}.
\end{equation}
The differential in \eqref{eq:intermediate-fricke-pullback} is defined over
\(\Q\), since \(f\) and the coefficients \(x_r\) are rational.
Pullback along the Abel--Jacobi map identifies invariant
differentials on the Jacobian with regular differentials on the curve
\cite{MilneJacobians}. Thus
\(\mathcal W^*\omega_{E'}\) is fixed by \(\operatorname{Gal}(K/\Q)\). For
every \(\sigma\in\operatorname{Gal}(K/\Q)\), the difference
\({}^{\sigma}\!\mathcal W-\mathcal W\) pulls \(\omega_{E'}\) back to zero.
In characteristic zero, pullback of invariant differentials is injective on
\(\Hom_K(J_{\Delta_N}(N)_K,E'_K)\). Indeed, the image of a nonzero
homomorphism is a positive-dimensional abelian subvariety of \(E'_K\), hence
all of \(E'_K\). In characteristic zero, such a surjective homomorphism is
separable and has nonzero pullback on invariant differentials. Thus
\({}^{\sigma}\!\mathcal W=\mathcal W\) for every
\(\sigma\in\operatorname{Gal}(K/\Q)\).

For abelian varieties \(A,B\) over
\(\Q\), faithfully flat Galois descent
\cite{SGA1} gives the isomorphism
\begin{equation}\label{eq:hom-galois-descent}
        \Hom_\Q(A,B)\xrightarrow{\sim}
        \Hom_K(A_K,B_K)^{\operatorname{Gal}(K/\Q)}.
\end{equation}
By \eqref{eq:hom-galois-descent}, applied with \(A=J_{\Delta_N}(N)\) and
\(B=E'\), the homomorphism \(\mathcal W\) descends uniquely to \(\Q\).

Apply the N\'eron mapping argument used for
\eqref{eq:intermediate-first-triangular} to the curve map
\(\mathcal W\circ j_\infty\) at the rational cusp \(\infty\). Its coefficient
series relative to \(dq/q\) lies in \(q\Z[[q]]\), and the second triangular
argument yields
\begin{equation}\label{eq:intermediate-second-triangular}
        (R/r)t c_{u\circ\pi_{\boldsymbol\Delta}}x_r\in\Z.
\end{equation}
A B\'ezout combination of \eqref{eq:intermediate-first-triangular} and
\eqref{eq:intermediate-second-triangular}, followed by localization at
\(\ell\), proves \eqref{eq:intermediate-two-cusp}.
\end{proof}

\pagebreak[3]
For a general intermediate tower, the repeated-prime step cannot use total
ramification.
If \(N=\ell n\) with \(\ell\mid n\), then
\(\iota_{\ell,N,n}^*q_n=q_N^\ell\), so
the ramification index at \(\infty\) is \(\ell\), but the degree is a
congruence-subgroup index and may be
larger. For example, the standard congruence-subgroup index formula computes
\(\deg(\iota^{(1)}_{2,8,4})=4\)
\cite{DiamondShurman}. Consequently, the total-ramification argument
of Proposition \ref{prop:connected-degeneracy-kernel} does not apply to a
general intermediate tower.

Armstrong's orbit-space theorem \cite{Armstrong} identifies the fundamental
group of a congruence-group quotient of \(\mathfrak H\) with the quotient of
the congruence group by the normal subgroup generated by elliptic stabilizers.
Passing from the quotient of \(\mathfrak H\) to the compact modular curve
amounts to adjoining the cusps
\cite{DiamondShurman}. Van Kampen's theorem for attaching
\(2\)-cells \cite{Hatcher} therefore identifies the
fundamental group of the compactification with the further quotient by the
normal subgroup generated by the parabolic stabilizers. The group-theoretic
assertion needed for surjectivity on fundamental groups is the following.

\begin{lemma}\label{lem:intermediate-parabolic-generation}
Let \(N=\ell n\) with \(\ell\mid n\), and put
\(\alpha=\left(\begin{smallmatrix}\ell&0\\0&1\end{smallmatrix}\right)\) and
\(T=\left(\begin{smallmatrix}1&1\\0&1\end{smallmatrix}\right)\). Set
\(\Gamma=\Gamma_{\Delta_n}(n)\) and
\(\Gamma_\alpha=\alpha\Gamma_{\Delta_N}(N)\alpha^{-1}\).
Then \(\Gamma_\alpha\subseteq\Gamma\) and
\begin{equation}\label{eq:parabolic-generation}
        \langle \Gamma_\alpha,T\rangle=\Gamma.
\end{equation}
\end{lemma}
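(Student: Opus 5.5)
The proof has two parts. First I establish the inclusion by a direct matrix computation. Then I prove \eqref{eq:parabolic-generation} in two steps: one power of \(T\) fixes the upper-right entry, and one explicit element of \(\langle\Gamma_\alpha,T\rangle\) corrects the upper-left entry modulo \(N\).

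\emph{Inclusion.} For \(\gamma=\left(\begin{smallmatrix}a&b\\c&d\end{smallmatrix}\right)\in\Gamma_{\Delta_N}(N)\) one has \(\alpha\gamma\alpha^{-1}=\left(\begin{smallmatrix}a&\ell b\\c/\ell&d\end{smallmatrix}\right)\), as in the proof of Proposition~\ref{prop:degeneracy-degree-ramification}. Since \(N=\ell n\mid c\), we get \(n\mid c/\ell\). Moreover, \(a\bmod n\) is the reduction of \(a\bmod N\in\Delta_N\), so it lies in \(\Delta_n\) by \eqref{eq:diamond-surjectivity}. Hence \(\Gamma_\alpha\subseteq\Gamma\).

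Conjugating back by \(\alpha^{-1}\) gives the explicit description
\[
\Gamma_\alpha=\left\{\begin{pmatrix}a&b\\c&d\end{pmatrix}\in\Gamma_0(n):\ \ell\mid b,\ a\bmod N\in\Delta_N\right\}.
\]
The map \(x\mapsto\alpha^{-1}x\alpha\) identifies \(\Gamma_0(n)\cap\Gamma^0(\ell)\) with \(\Gamma_0(N)\) and does not change the upper-left entry. Since \(x\mapsto x_{11}\bmod N\) is a homomorphism \(\Gamma_0(N)\to(\Z/N\Z)^\times\), the map \(\chi(x)=x_{11}\bmod N\) is a homomorphism on \(\Gamma_0(n)\cap\Gamma^0(\ell)\). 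With this notation, \(\Gamma_\alpha=\chi^{-1}(\Delta_N)\).

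\emph{Step 1: fixing the upper-right entry.} Let \(\gamma=\left(\begin{smallmatrix}a&b\\c&d\end{smallmatrix}\right)\in\Gamma\). Since \(\ell\mid n\mid c\), the entry \(d\) is a unit modulo \(\ell\). Choose \(j\) with \(b+jd\equiv0\pmod\ell\). Then \(\gamma'=T^j\gamma\) lies in \(\Gamma_0(n)\cap\Gamma^0(\ell)\), and its upper-left entry is \(a+jc\equiv a\pmod n\), which lies in \(\Delta_n\).

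\emph{Step 2: correcting the upper-left entry.} By \eqref{eq:diamond-surjectivity}, choose \(\delta\in\Delta_N\) with \(\delta\equiv\chi(\gamma')\pmod n\). Then \(\chi(\gamma')\delta^{-1}\) lies in the kernel of \((\Z/N\Z)^\times\to(\Z/n\Z)^\times\). Because \(\ell\mid n\), we have \(n^2\equiv0\pmod{N}\), so this kernel is \(\{1+kn\}\) and it is generated by \(1+n\), since \((1+n)^k\equiv1+kn\pmod N\).

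Now put \(g=\left(\begin{smallmatrix}1&0\\n&1\end{smallmatrix}\right)\). It lies in \(\Gamma_\alpha\), because \(\alpha^{-1}g\alpha=\left(\begin{smallmatrix}1&0\\N&1\end{smallmatrix}\right)\). A direct computation gives
\[
h=TgT^{-1}=\begin{pmatrix}1+n&-n\\n&1-n\end{pmatrix}\in\langle\Gamma_\alpha,T\rangle .
\]
Here \(\ell\mid n\), so \(h\in\Gamma_0(n)\cap\Gamma^0(\ell)\), and \(\chi(h)=1+n\). Choose \(k\) with \((1+n)^{k}\equiv\chi(\gamma')^{-1}\delta\pmod N\). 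Then \(\chi(h^{k}\gamma')=\delta\in\Delta_N\), so \(h^{k}\gamma'\in\Gamma_\alpha\). Therefore \(\gamma=T^{-j}h^{-k}(h^{k}\gamma')\in\langle\Gamma_\alpha,T\rangle\), which proves \eqref{eq:parabolic-generation}.

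The step I expect to need the most care is Step 2. The two naive conditions on a single power \(T^j\), namely \(\ell\mid b+jd\) and \(a+jc\bmod N\in\Delta_N\), can be incompatible. This is why the argument uses the conjugate \(TgT^{-1}\) and the homomorphism \(\chi\) on the conjugate of \(\Gamma_0(N)\), which decouples the two conditions.
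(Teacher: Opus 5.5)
Your proof is correct, and it reaches the result by a somewhat different route from the paper's.

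The paper takes \(\beta=\left(\begin{smallmatrix}a&b\\nk&d\end{smallmatrix}\right)\in\Gamma\) and works in three moves, ending with \(T^tU^e\beta T^s\in\Gamma_\alpha\), where \(U=\left(\begin{smallmatrix}1&0\\n&1\end{smallmatrix}\right)\) (your \(g\)):
\begin{enumerate}
\item It left-multiplies by \(U^e\), with \(e\in\{0,1\}\), so that the lower-left entry becomes \(n\) times an \(\ell\)-adic unit.
\item A left power \(T^t\) then moves the upper-left entry onto a lift \(\varepsilon\in\Delta_N\) of \(a\bmod n\).
\item A right power \(T^s\) leaves the upper-left entry unchanged and makes the upper-right entry divisible by \(\ell\).
\end{enumerate}

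You use only left multiplication. First \(T^j\) clears the upper-right entry. The upper-left correction is then handled by the character \(\chi(x)=x_{11}\bmod N\) on \(\Gamma_0(n)\cap\Gamma^0(\ell)\cong\Gamma_0(N)\), for which \(\Gamma_\alpha=\chi^{-1}(\Delta_N)\). You combine this with the fact that the kernel of \((\Z/N\Z)^\times\to(\Z/n\Z)^\times\) is cyclic, generated by \(\chi(TUT^{-1})=1+n\).

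What each route buys:
\begin{itemize}
\item Your version explains structurally why the two entry conditions decouple, which is exactly the obstruction you identified for a single power of \(T\). It also avoids the case split on \(e\).
\item As a by-product, your argument shows that \(\Gamma\) is covered by the cosets \(T^{-j}(TUT^{-1})^{-k}\Gamma_\alpha\) with \(0\le j,k<\ell\). Hence \([\Gamma:\Gamma_\alpha]\le\ell^2\), which is consistent with the paper's example \(\deg\iota^{(1)}_{2,8,4}=4\).
\item The paper's argument is a shorter direct computation. It needs neither the homomorphism property of \(\chi\) nor the structure of the kernel.
\end{itemize}
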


\begin{proof}
Direct conjugation gives
\[
 \Gamma_\alpha
 =\left\{
 \begin{pmatrix}a&b\\c&d\end{pmatrix}\in\Gamma
 \ \middle|\ \ell\mid b,\quad a\bmod N\in\Delta_N
 \right\}.
\]
Let \(U=\left(\begin{smallmatrix}1&0\\n&1\end{smallmatrix}\right)
\in\Gamma_\alpha\), and fix
\(\beta=\left(\begin{smallmatrix}a&b\\nk&d\end{smallmatrix}\right)\in\Gamma\).
Since \(\ell\mid n\), the integer \(a\) is a unit modulo \(\ell\).
Choose \(e\in\{0,1\}\) so that \(k+ea\) is also a unit modulo \(\ell\).
Then
\[
 U^e\beta=\begin{pmatrix}a&b\\n(k+ea)&d+enb\end{pmatrix}.
\]
By \eqref{eq:diamond-surjectivity}, \(a\bmod n\) lifts to an element
\(\varepsilon\in\Delta_N\). Since \(k+ea\) is a unit modulo \(\ell\), there
is an integer \(t\) such that \(a+tn(k+ea)\equiv\varepsilon\pmod N\).
The upper-left entry of \(T^tU^e\beta\) is therefore congruent to
\(\varepsilon\) modulo \(N\) and is a unit modulo \(\ell\).
Right multiplication by a suitable power \(T^s\) makes the upper-right
entry divisible by \(\ell\) while leaving the upper-left entry unchanged.
We obtain \(T^tU^e\beta T^s\in\Gamma_\alpha\), so
\(\beta\in\langle\Gamma_\alpha,T\rangle\).
The reverse inclusion follows from \(\Gamma_\alpha\subseteq\Gamma\) and
\(T\in\Gamma\).
\end{proof}

Together with the preceding description of the fundamental group,
\eqref{eq:parabolic-generation} implies that the intermediate degeneracy map
is surjective on topological fundamental groups.

\begin{proposition}\label{prop:intermediate-connected-pushforward}
Let \(N=\ell n\) with \(\ell\mid n\). Then
\(\iota_{\ell,N,n,*}:J_{\Delta_N}(N)\to
J_{\Delta_n}(n)\) is surjective and has geometrically connected kernel.
\end{proposition}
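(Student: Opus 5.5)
The plan follows the proof of Proposition~\ref{prop:connected-degeneracy-kernel}, with total ramification replaced by the group-theoretic surjectivity of Lemma~\ref{lem:intermediate-parabolic-generation}. Put \(\Gamma=\Gamma_{\Delta_n}(n)\) and \(\Gamma_\alpha=\alpha\Gamma_{\Delta_N}(N)\alpha^{-1}\). Conjugation by \(\alpha\) identifies \(\iota_{\ell,N,n}\) analytically with the natural projection
\[
\Gamma_\alpha\backslash\mathfrak H^*\longrightarrow\Gamma\backslash\mathfrak H^*.
\]
This is a finite covering of compact Riemann surfaces, branched only over elliptic points and cusps.

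\textbf{Step 1: surjectivity on fundamental groups.} Armstrong's theorem together with van Kampen for attaching \(2\)-cells at the cusps gives
\[
\pi_1\bigl(X_{\Delta_n}(n)(\C)\bigr)\simeq\Gamma/\langle\!\langle\text{elliptic and parabolic elements of }\Gamma\rangle\!\rangle,
\]
and similarly for \(\Gamma_\alpha\). The map induced by the covering is the one induced by the inclusion \(\Gamma_\alpha\subseteq\Gamma\). This compatibility is the step I expect to be the main obstacle. To handle it, I would choose a base point in \(\mathfrak H\) away from the elliptic points. A path in \(\mathfrak H\) from that base point to its \(\gamma\)-translate then represents the class of \(\gamma\) in both quotients, so the induced map really is the one coming from the inclusion.

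Granting this, the image of \(\pi_1\) of the source contains the image of \(\Gamma_\alpha\). The parabolic element \(T\in\Gamma\) fixes the cusp \(\infty\), so it maps to the identity in \(\pi_1\) of the target. By \eqref{eq:parabolic-generation}, \(\Gamma=\langle\Gamma_\alpha,T\rangle\), so the image of \(\Gamma_\alpha\) is already all of the quotient. Hence the map on \(\pi_1\) is surjective, and after abelianizing, so is the map on \(H_1(-,\Z)\).

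\textbf{Step 2: surjectivity of the pushforward.} The push--pull identity \(\iota_{\ell,*}\circ\iota_\ell^*=[\deg\iota_\ell]\) shows that \(\iota_{\ell,N,n,*}\) is surjective, since multiplication by a nonzero integer is surjective.

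\textbf{Step 3: connectedness of the kernel.} Under the analytic uniformization \(J=H^0(\Omega^1)^\vee/H_1(-,\Z)\), the pushforward on Jacobians is induced by the linear map dual to pullback of differentials. On period lattices this linear map restricts to \(\iota_*\) on \(H_1\). As in the proof of Proposition~\ref{prop:connected-degeneracy-kernel}, the component group of the kernel of a surjective map of complex tori \(V/\Lambda\to W/\Lambda'\) is \(\Lambda'/F(\Lambda)\). By Step~1 this quotient vanishes, so the kernel is connected over \(\C\). Connectedness of a group scheme of finite type over \(\Q\) may be checked after base change to \(\overline{\Q}\subset\C\). Hence \(\ker\iota_{\ell,N,n,*}\) is geometrically connected.
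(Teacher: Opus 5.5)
Your proposal is correct and follows essentially the same route as the paper. It uses Armstrong's theorem plus van Kampen to present $\pi_1$ of the compactified curve as $\Gamma$ modulo elliptic and parabolic elements, then the generation $\Gamma=\langle\Gamma_\alpha,T\rangle$ with $T$ parabolic to get surjectivity on $\pi_1$ and on $H_1$, and then the period-lattice argument to show the kernel is connected. The only differences are minor: you get surjectivity of the pushforward from the push--pull identity rather than from $F(\Lambda_X)=\Lambda_Y$ spanning $V_Y$, and your base-point argument spells out the compatibility of Armstrong's isomorphism with the inclusion $\Gamma_\alpha\subseteq\Gamma$, which the paper uses without comment.
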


\begin{proof}
Retain the notation of Lemma \ref{lem:intermediate-parabolic-generation} and
work over \(\C\). Let \(\overline{\Gamma_\alpha}\subseteq\overline\Gamma\)
be the images of \(\Gamma_\alpha\subseteq\Gamma\) in
\(\mathrm{PSL}_2(\mathbb R)\), and let \(\overline T\) be the image of \(T\).

Armstrong's orbit-space theorem \cite{Armstrong}, applied to the discontinuous
action of \(\overline\Gamma\) on the simply connected upper half-plane
\(\mathfrak H\), identifies
\(\pi_1(\overline\Gamma\backslash\mathfrak H)\) with the quotient of
\(\overline\Gamma\) by the normal subgroup generated by its elliptic stabilizers.
The compact modular curve is obtained by adjoining the cusps
\cite{DiamondShurman}. Adjoining a cusp amounts topologically to
filling in the corresponding puncture with a disk. Van Kampen's theorem for
attaching \(2\)-cells \cite{Hatcher} therefore identifies the
fundamental group of the compactification with the further quotient by the
normal closure of the
images of primitive generators of the parabolic stabilizers. Consequently,
\(\pi_1(X_{\Delta_n}(n)(\C))\simeq \overline\Gamma/\mathcal N\), where
\(\mathcal N\) is the normal subgroup generated by the elliptic and
parabolic stabilizers.

The map of modular curves is induced by
\(\overline{\Gamma_\alpha}\subseteq\overline\Gamma\), so the image of the
induced homomorphism on fundamental groups is
\(\overline{\Gamma_\alpha}\mathcal N/\mathcal N\).
The element \(\overline T\) is a primitive generator of the
stabilizer of the cusp \(\infty\), so it lies in \(\mathcal N\) and has trivial
image in \(\overline\Gamma/\mathcal N\). Since
\(\overline\Gamma=\langle\overline{\Gamma_\alpha},\overline T\rangle\) by
\eqref{eq:parabolic-generation}, one has
\(\overline{\Gamma_\alpha}\mathcal N=\overline\Gamma\). Thus the map
\(\pi_1(X_{\Delta_N}(N)(\C))\to\pi_1(X_{\Delta_n}(n)(\C))\) is surjective.

Set \(X=X_{\Delta_N}(N)(\C)\), \(Y=X_{\Delta_n}(n)(\C)\), and
\(\delta=\iota_{\ell,N,n}^{\mathrm{an}}:X\to Y\).
Since \(H_1(-,\Z)\simeq\pi_1(-)^{\mathrm{ab}}\), the surjective homomorphism
\(\pi_1(X)\to\pi_1(Y)\) induces the surjection
\(\delta_*:H_1(X,\Z)\twoheadrightarrow H_1(Y,\Z)\).
Put \(V_X=H^0(X,\Omega_X^1)^\vee\) and \(\Lambda_X=H_1(X,\Z)\), and put
\(V_Y=H^0(Y,\Omega_Y^1)^\vee\) and \(\Lambda_Y=H_1(Y,\Z)\).
The analytic uniformization of a Jacobian
\cite{BirkenhakeLange}
identifies \(J_{\Delta_N}(N)(\C)\) with \(V_X/\Lambda_X\) and
\(J_{\Delta_n}(n)(\C)\) with \(V_Y/\Lambda_Y\), where the homology lattices
are embedded by integration. The pushforward \(\iota_{\ell,N,n,*}\) is
induced by \(F=(\delta^*)^\vee:V_X\to V_Y\), and functoriality of integration
identifies \(F|_{\Lambda_X}\) with \(\delta_*\). Hence
\(F(\Lambda_X)=\Lambda_Y\). Since
\(\Lambda_Y\) spans the underlying real vector space of \(V_Y\), both \(F\)
and the induced pushforward are surjective. Moreover,
\[
 F^{-1}(\Lambda_Y)=\ker F+\Lambda_X.
\]
The kernel of the induced pushforward is therefore the image of the connected
complex vector space \(\ker F\) in \(V_X/\Lambda_X\), and hence is connected.
Surjectivity and geometric connectedness of the kernel can be checked over
\(\C\), so the proposition follows.

\end{proof}

The compatible diamond tower satisfies \hyperlink{tower-condition-T1}{\(\mathrm{(T1)}\)} by Lemma
\ref{lem:intermediate-old-hom}, \hyperlink{tower-condition-T2}{\(\mathrm{(T2)}\)} by Lemma
\ref{lem:intermediate-two-cusp}, and \hyperlink{tower-condition-T3}{\(\mathrm{(T3)}\)} by Proposition
\ref{prop:intermediate-connected-pushforward} together with
\eqref{eq:tower-map-compatibility}.

\begin{theorem}\label{thm:intermediate-tower}
For every multiple \(N\) of \(M\), one has
\begin{equation}\label{eq:intermediate-lattice}
        \mathcal D_{c_{u\circ\pi_{\boldsymbol\Delta}}}(R)
        \Hom_\Q(J_{\Delta_N}(N),E')
        \subseteq
        \bigoplus_{r\mid R}\Z\Phi^{\boldsymbol\Delta}_{r,u,N,M}.
\end{equation}
If \(c_{u\circ\pi_{\boldsymbol\Delta}}=1\), then the old maps form a
\(\Z\)-basis of
\(\Hom_\Q(J_{\Delta_N}(N),E')\).
\end{theorem}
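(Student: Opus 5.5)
The plan is to deduce Theorem~\ref{thm:intermediate-tower} directly from the abstract criterion, Theorem~\ref{thm:tower-denominator-criterion}. We apply it to the tower \(\mathcal X(L)=X_{\Delta_L}(L)\) with base points \(P_L=\infty\), degeneracy maps \(\iota^{\mathcal X}_{r,N,L}=\iota^{\boldsymbol\Delta}_{r,N,L}\), optimal quotient \(\Phi_M=\Phi_{\boldsymbol\Delta}\), \(E_0=E_{\boldsymbol\Delta}\), and constant \(c_{\mathcal X,u}=c_{u\circ\pi_{\boldsymbol\Delta}}\). The latter is a positive integer by Lemma~\ref{lem:intermediate-old-hom}. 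First we check the structural hypotheses. Each \(X_{\Delta_L}(L)\) is a smooth projective geometrically connected curve over \(\Q\) with the rational cusp \(\infty\). The maps \(\iota^{\boldsymbol\Delta}_{r,N,L}\) are finite, send \(\infty\) to \(\infty\) (since \(\tau\mapsto r\tau\) fixes \(\infty\)), and satisfy the composition identity \eqref{eq:tower-map-compatibility}. All of this was recorded in the construction of compatible diamond towers, via the commuting diagram over the \(X_1\)- and \(X_0\)-towers.

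Next we verify (T1)--(T3) in order. (T1) is exactly the content of Lemma~\ref{lem:intermediate-old-hom}: the old maps form a \(\Q\)-basis, and \(u\circ\Phi_{\boldsymbol\Delta}\) is primitive at level \(M\). (T2) is Lemma~\ref{lem:intermediate-two-cusp}, whose conclusion \eqref{eq:intermediate-two-cusp} is verbatim \eqref{eq:tower-two-cusp-bound}.

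For (T3), suppose \(\ell^2\mid R\) and set \(N'=N/\ell\). Then \(\ell\mid N'\), because \(\ell^2\mid N\). Hence Proposition~\ref{prop:intermediate-connected-pushforward}, applied with \(n=N'\), shows that \(\iota_{\ell,N,N',*}\) is surjective with geometrically connected kernel. The identity
\[
\Phi^{\boldsymbol\Delta}_{\ell s,u,N,M}=\Phi^{\boldsymbol\Delta}_{s,u,N',M}\circ\iota_{\ell,N,N',*}
\]
follows by functoriality of pushforward from \(\iota_{\ell s,N,M}=\iota_{s,N',M}\circ\iota_{\ell,N,N'}\), which is \eqref{eq:tower-map-compatibility}.

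With (T1)--(T3) in hand, the inclusion \eqref{eq:tower-sharp-lattice} gives \eqref{eq:intermediate-lattice} immediately. When \(c_{u\circ\pi_{\boldsymbol\Delta}}=1\), we have \(\mathcal D_1(R)=1\). Since the old maps are themselves integral homomorphisms, the two lattices coincide, which yields the \(\Z\)-basis assertion; this is the last clause of Theorem~\ref{thm:tower-denominator-criterion}.

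The main point requiring care is (T3). Total ramification is unavailable here, so connectedness must come from the fundamental-group surjectivity of Proposition~\ref{prop:intermediate-connected-pushforward}. We also need the hypothesis \(\ell\mid n\) of Lemma~\ref{lem:intermediate-parabolic-generation}, which holds precisely because \(\ell^2\mid R\) forces \(\ell\mid N/\ell\). Beyond this, the argument is bookkeeping: we must confirm that the localized form of (T2) matches Lemma~\ref{lem:intermediate-two-cusp}, which was stated for elements of \(\Hom\otimes\Z_{(\ell)}\), exactly as (T2) requires.
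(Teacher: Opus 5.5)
Your proposal is correct and is essentially the paper's own proof. It verifies (T1) by Lemma~\ref{lem:intermediate-old-hom}, (T2) by Lemma~\ref{lem:intermediate-two-cusp}, and (T3) by Proposition~\ref{prop:intermediate-connected-pushforward} together with \eqref{eq:tower-map-compatibility}, then invokes Theorem~\ref{thm:tower-denominator-criterion}; your explicit check that \(\ell^2\mid R\) forces \(\ell\mid N/\ell\), which is the hypothesis of Proposition~\ref{prop:intermediate-connected-pushforward}, is a detail the paper leaves implicit.
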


\begin{proof}
Condition \hyperlink{tower-condition-T1}{\(\mathrm{(T1)}\)} is Lemma \ref{lem:intermediate-old-hom}, and
condition \hyperlink{tower-condition-T2}{\(\mathrm{(T2)}\)}, with
\(c_{\mathcal X,u}=c_{u\circ\pi_{\boldsymbol\Delta}}\), is Lemma
\ref{lem:intermediate-two-cusp}. If \(\ell^2\mid R\),
Proposition \ref{prop:intermediate-connected-pushforward} proves that the
pushforward in \hyperlink{tower-condition-T3}{\(\mathrm{(T3)}\)} is surjective with geometrically connected
kernel. Equation \eqref{eq:tower-map-compatibility} is the remaining
composition identity. Hence conditions \hyperlink{tower-condition-T1}{\(\mathrm{(T1)}\)}--\hyperlink{tower-condition-T3}{\(\mathrm{(T3)}\)}
hold for the fixed target \(E'\).

Theorem \ref{thm:tower-denominator-criterion} establishes
\eqref{eq:intermediate-lattice} and the integral-basis assertion.
\end{proof}

Proposition \ref{prop:tower-degree-consequences} specializes to the
following divisibility on \(X_{\Delta_N}(N)\).

\begin{corollary}\label{cor:intermediate-fixed-degree}
Under the hypotheses and notation of Theorem \ref{thm:intermediate-tower},
every nonconstant morphism \(g:X_{\Delta_N}(N)\to E'\) over \(\Q\) satisfies
\begin{equation}\label{eq:intermediate-fixed-degree}
        \deg(u\circ\pi_{\boldsymbol\Delta})
        \mid
        \mathcal D_{c_{u\circ\pi_{\boldsymbol\Delta}}}(R)\deg g.
\end{equation}
\end{corollary}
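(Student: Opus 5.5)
This is a direct specialization of Proposition \ref{prop:tower-degree-consequences} to the compatible diamond tower. We take \(\mathcal X(L)=X_{\Delta_L}(L)\) with base point \(P_L=\infty\) and tower maps \(\iota^{\boldsymbol\Delta}_{r,N,L}\). We also set \(\Phi_M=\Phi_{\boldsymbol\Delta}\), \(\pi_M=\pi_{\boldsymbol\Delta}\), \(E_0=E_{\boldsymbol\Delta}\) and \(c_{\mathcal X,u}=c_{u\circ\pi_{\boldsymbol\Delta}}\). The plan is to verify that the hypotheses of that proposition hold in this setting and then read off \eqref{eq:intermediate-fixed-degree} from \eqref{eq:tower-fixed-degree}.

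The verification is the same bookkeeping already done in the proof of Theorem \ref{thm:intermediate-tower}, and we will cite it rather than redo it. The tower data are in place: the degeneracy maps preserve the cusp \(\infty\) and satisfy the composition identity \eqref{eq:tower-map-compatibility}, and \(\Phi_{\boldsymbol\Delta}\) is an optimal quotient, so its kernel is geometrically connected. For the three conditions:
\begin{itemize}
\item Condition (T1) is Lemma \ref{lem:intermediate-old-hom}.
\item Condition (T2), with constant \(c_{u\circ\pi_{\boldsymbol\Delta}}\), is Lemma \ref{lem:intermediate-two-cusp}.
\item Condition (T3) follows from Proposition \ref{prop:intermediate-connected-pushforward} applied with \(n=N/\ell\), which requires \(\ell\mid n\); this holds because \(\ell^2\mid R\) implies \(\ell\mid N/\ell\). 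The factorization \(\Phi^{\boldsymbol\Delta}_{\ell s,u,N,M}=\Phi^{\boldsymbol\Delta}_{s,u,N',M}\circ\iota_{\ell,N,N',*}\) is obtained by pushing forward \eqref{eq:tower-map-compatibility}, using functoriality of pushforward on Jacobians.
\end{itemize}

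With (T1)--(T3) in hand, Proposition \ref{prop:tower-degree-consequences} applies. Its proof runs as follows. Translate \(g\) so that \(g(\infty)=0\) and write the associated homomorphism \(G\) in reduced form \(\nu^{-1}\sum b_r\Phi^{\boldsymbol\Delta}_{r,u,N,M}\), with \(\gcd(\nu,\{b_r\})=1\). Theorem \ref{thm:tower-denominator-criterion} gives \(\nu\mid\mathcal D_{c_{u\circ\pi_{\boldsymbol\Delta}}}(R)\). Proposition \ref{prop:tower-degree-matrix} gives \(\nu\mid\boldsymbol b^TA_{\boldsymbol\Delta,N}\boldsymbol b\) together with the degree formula. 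Multiplying the two quotients shows that \(\mathcal D\deg g/\deg(u\circ\pi_{\boldsymbol\Delta})\) is an integer.

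No step is a real obstacle, since everything reduces to earlier results. The only point needing care is (T3): Proposition \ref{prop:intermediate-connected-pushforward} is stated with \(\ell\mid n\), and we must confirm that the repeated-prime condition \(\ell^2\mid R\) in (T3) implies this hypothesis, as noted above.
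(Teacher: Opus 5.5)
Your proposal is correct and matches the paper's proof. The paper simply applies Proposition \ref{prop:tower-degree-consequences} to the compatible diamond tower, with (T1)--(T3) verified as in Theorem \ref{thm:intermediate-tower}. Your check that \(\ell^2\mid R\) implies \(\ell\mid N/\ell\) correctly confirms the hypothesis of Proposition \ref{prop:intermediate-connected-pushforward}.
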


\begin{proof}
Apply Proposition \ref{prop:tower-degree-consequences} to the compatible
diamond tower \(\boldsymbol\Delta\).
\end{proof}

When \(c_{u\circ\pi_{\boldsymbol\Delta}}=1\), the integral old basis refines the
divisibility in \eqref{eq:intermediate-fixed-degree} to an exact degree formula
in terms of the quadratic form
\(\boldsymbol b^{T}A_{\boldsymbol\Delta,N}\boldsymbol b\).

\begin{corollary}\label{cor:intermediate-degree-form}
Under the hypotheses and notation of Theorem \ref{thm:intermediate-tower},
suppose that \(c_{u\circ\pi_{\boldsymbol\Delta}}=1\).
For every nonconstant morphism
\(g:X_{\Delta_N}(N)\to E'\), its associated homomorphism \(G_g\) based at
\(\infty\) has a unique nonzero coefficient vector
\(\boldsymbol b=(b_r)_{r\mid R}\in\Z^{\{r:\,r\mid R\}}\) satisfying
\[
        G_g=\sum_{r\mid R}b_r\Phi^{\boldsymbol\Delta}_{r,u,N,M},
        \qquad
        \deg g=\deg(u\circ\pi_{\boldsymbol\Delta})
        \,\boldsymbol b^{T}A_{\boldsymbol\Delta,N}\boldsymbol b.
\]
\end{corollary}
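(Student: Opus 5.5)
The statement is Corollary~\ref{cor:intermediate-degree-form}. It is the compatible-diamond-tower specialization of Corollary~\ref{cor:tower-integral-degree-form}. The plan is therefore to check that the abstract tower hypotheses hold with \(c_{\mathcal X,u}=c_{u\circ\pi_{\boldsymbol\Delta}}=1\), and then to read off the integral expansion and the degree formula.

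First, set up the tower data. Take
\[
\mathcal X(L)=X_{\Delta_L}(L),\quad P_L=\infty,\quad \Phi_M=\Phi_{\boldsymbol\Delta},\quad \pi_M=\pi_{\boldsymbol\Delta},\quad E_0=E_{\boldsymbol\Delta},
\]
with the degeneracy maps \(\iota^{\boldsymbol\Delta}_{r,N,L}\) and the constant \(c_{\mathcal X,u}=c_{u\circ\pi_{\boldsymbol\Delta}}\). The proof of Theorem~\ref{thm:intermediate-tower} shows that conditions (T1)--(T3) hold:
\begin{itemize}
\item (T1) is Lemma~\ref{lem:intermediate-old-hom};
\item (T2) is Lemma~\ref{lem:intermediate-two-cusp};
\item (T3) is Proposition~\ref{prop:intermediate-connected-pushforward} together with \eqref{eq:tower-map-compatibility}.
\end{itemize}
The degeneracy maps send \(\infty\) to \(\infty\), so the base-point requirements of the tower formalism are satisfied.

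Next, let \(g\) be nonconstant. Translate \(g\) so that \(g(\infty)=0\), and let \(G_g\) be its associated homomorphism based at \(\infty\). Translation changes neither the degree nor the associated homomorphism, by Lemma~\ref{lem:curve-jacobian}. Since \(c_{u\circ\pi_{\boldsymbol\Delta}}=1\), Theorem~\ref{thm:intermediate-tower} says the old maps \(\Phi^{\boldsymbol\Delta}_{r,u,N,M}\) form a \(\Z\)-basis of \(\Hom_\Q(J_{\Delta_N}(N),E')\). So \(G_g\) has an integral expansion \(\sum_r b_r\Phi^{\boldsymbol\Delta}_{r,u,N,M}\). This vector is unique because the old maps are linearly independent by (T1). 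It is nonzero because \(g\) is nonconstant, which forces \(G_g\ne0\).

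Finally, apply Proposition~\ref{prop:tower-degree-matrix} with \(\nu=1\). By the discussion after Lemma~\ref{lem:intermediate-old-hom}, the matrix \(A_{\mathcal X,N}\) there equals \(A_{\boldsymbol\Delta,N}\). Then \eqref{eq:tower-degree-and-first-denominator} gives
\[
\deg g=\deg(u\circ\pi_{\boldsymbol\Delta})\,\boldsymbol b^{T}A_{\boldsymbol\Delta,N}\boldsymbol b.
\]
Alternatively, one can simply cite Corollary~\ref{cor:tower-integral-degree-form} directly.

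There is no real obstacle; the only care needed is bookkeeping. Two points need checking:
\begin{itemize}
\item The matrix characterized in \eqref{eq:intermediate-degree-matrix} coincides with the abstract \(A_{\mathcal X,N}\). This holds because the defining relations are literally the same.
\item The Abel--Jacobi base point used in Proposition~\ref{prop:tower-degree-matrix} is the same cusp \(\infty\) of the \(\mathcal X_\mu\)-model that was fixed in the subsection.
\end{itemize}
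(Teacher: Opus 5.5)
Your proposal is correct and is essentially the paper's argument. The paper also takes the integral expansion, with its uniqueness and nonvanishing, from Corollary~\ref{cor:tower-integral-degree-form}, that is, from the \(\Z\)-basis statement of Theorem~\ref{thm:intermediate-tower} once (T1)--(T3) are verified, and then reads off the degree from \eqref{eq:tower-degree-and-first-denominator} with \(\nu=1\) and \(A_{\mathcal X,N}=A_{\boldsymbol\Delta,N}\); your extra bookkeeping (base point \(P_L=\infty\), translation invariance via Lemma~\ref{lem:curve-jacobian}) only makes explicit what the paper's two-line proof leaves implicit.
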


\begin{proof}
Corollary \ref{cor:tower-integral-degree-form} establishes the
integral expansion. The degree formula is
\eqref{eq:tower-degree-and-first-denominator} with
\(A_{\mathcal X,N}=A_{\boldsymbol\Delta,N}\).
\end{proof}

Choose the sign of a minimal N\'eron differential
\(\omega_{E_{\boldsymbol\Delta}}\), and define \(c_{\boldsymbol\Delta}>0\) by
\[
        \pi_{\boldsymbol\Delta}^*\omega_{E_{\boldsymbol\Delta}}
        =c_{\boldsymbol\Delta}f(q)\frac{dq}{q}.
\]
Lemma \ref{lem:intermediate-old-hom}, with
\(u=\mathrm{id}_{E_{\boldsymbol\Delta}}\), implies
\(c_{\boldsymbol\Delta}\in\Z_{>0}\).

\begin{theorem}\label{thm:intermediate-optimal}
Let \(N\) be a multiple of \(M\). For every elliptic curve \(E''/\Q\)
that is \(\Q\)-isogenous to \(E_{\boldsymbol\Delta}\) and every nonconstant morphism
\(g:X_{\Delta_N}(N)\to E''\) over \(\Q\), one has
\begin{equation}\label{eq:intermediate-optimal-divisibility}
        \deg\pi_{\boldsymbol\Delta}
        \mid
        \mathcal D_{c_{\boldsymbol\Delta}}(R)\deg g
        \quad\text{and}\quad
        \deg\pi_{\boldsymbol\Delta}
        \mid
        c_{\boldsymbol\Delta}^{\Omega(R)}\deg g.
\end{equation}
\end{theorem}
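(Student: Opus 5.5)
The plan is to deduce both divisibilities from Theorem~\ref{thm:tower-optimal-degree}, applied to the compatible diamond tower \(\mathcal X(L)=X_{\Delta_L}(L)\). The base data are \(P_L=\infty\), \(\Phi_M=\Phi_{\boldsymbol\Delta}\), \(\pi_M=\pi_{\boldsymbol\Delta}\) and \(E_0=E_{\boldsymbol\Delta}\). The hypothesis of that theorem is that conditions \hyperlink{tower-condition-T1}{\(\mathrm{(T1)}\)}--\hyperlink{tower-condition-T3}{\(\mathrm{(T3)}\)} hold for the particular target \(E'=E_{\boldsymbol\Delta}\) with \(u=\mathrm{id}_{E_{\boldsymbol\Delta}}\), and with constant \(c_{\mathcal X,\mathrm{id}}=c_{\boldsymbol\Delta}\).

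I will verify these conditions exactly as in the proof of Theorem~\ref{thm:intermediate-tower}, specialized to \(E'=E_{\boldsymbol\Delta}\) and \(u=\mathrm{id}\).
\begin{enumerate}
\item[(T1)] Lemma~\ref{lem:intermediate-old-hom} gives the rational old basis and the primitivity of \(\Phi_{\boldsymbol\Delta}\) at level \(M\). Here \(\mathrm{id}\) generates \(\End_\Q(E_{\boldsymbol\Delta})=\Z\).
\item[(T2)] Lemma~\ref{lem:intermediate-two-cusp} gives this condition with constant \(c_{u\circ\pi_{\boldsymbol\Delta}}\). For \(u=\mathrm{id}\), the normalization \eqref{eq:intermediate-manin} agrees with the definition of \(c_{\boldsymbol\Delta}\), so \(c_{\mathrm{id}\circ\pi_{\boldsymbol\Delta}}=c_{\boldsymbol\Delta}\). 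This is an integer by Lemma~\ref{lem:intermediate-old-hom}.
\item[(T3)] Proposition~\ref{prop:intermediate-connected-pushforward} gives surjectivity of the pushforward with geometrically connected kernel whenever \(\ell^2\mid R\). Its hypothesis \(\ell\mid n\) with \(n=N/\ell\) holds because \(\ell\mid R/\ell\), so \(\ell\mid N/\ell\). The composition identity \(\Phi^{\boldsymbol\Delta}_{\ell s,\mathrm{id},N,M}=\Phi^{\boldsymbol\Delta}_{s,\mathrm{id},N',M}\circ\iota_{\ell,N,N',*}\) follows by pushing forward \eqref{eq:tower-map-compatibility} on Jacobians.
\end{enumerate}

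With the conditions in place, Theorem~\ref{thm:tower-optimal-degree} applies to any \(E''\) isogenous to \(E_{\boldsymbol\Delta}\). Its proof already reconstructs (T1) for the target \(E''\) and a generator \(v\) via Corollary~\ref{cor:optimal-quotient-primitivity}, and it reduces the denominator estimate to the target \(E_{\boldsymbol\Delta}\) by composing with \(\widehat v\). This yields \(\deg\pi_{\boldsymbol\Delta}\mid\mathcal D_{c_{\boldsymbol\Delta}}(R)\deg g\), which is the first divisibility. The second then follows from \eqref{eq:sharp-divides-omega}, \(\mathcal D_{c_{\boldsymbol\Delta}}(R)\mid c_{\boldsymbol\Delta}^{\Omega(R)}\), by transitivity of divisibility.

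The only point needing care is the identification of the constant in (T2) for \(u=\mathrm{id}\) with \(c_{\boldsymbol\Delta}\). Both constants are defined by the same equation, and the only ambiguity is the sign of the minimal N\'eron differential, which is fixed by positivity in both definitions. I do not expect a genuine obstacle; the proof is a formal specialization of Theorem~\ref{thm:tower-optimal-degree}.
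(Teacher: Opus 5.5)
Your proposal is correct and matches the paper's proof. You verify \(\mathrm{(T1)}\)--\(\mathrm{(T3)}\) for the target \(E_{\boldsymbol\Delta}\) with \(u=\mathrm{id}\) using Lemma~\ref{lem:intermediate-old-hom}, Lemma~\ref{lem:intermediate-two-cusp} (with \(c_{\mathcal X}=c_{\boldsymbol\Delta}\)), and Proposition~\ref{prop:intermediate-connected-pushforward} together with \eqref{eq:tower-map-compatibility}, then invoke Theorem~\ref{thm:tower-optimal-degree} for the first divisibility and \eqref{eq:sharp-divides-omega} for the second; your extra checks (\(\ell^2\mid R\Rightarrow\ell\mid N/\ell\), and \(c_{\mathrm{id}\circ\pi_{\boldsymbol\Delta}}=c_{\boldsymbol\Delta}\)) are details the paper leaves implicit.
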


\begin{proof}
Condition \hyperlink{tower-condition-T1}{\(\mathrm{(T1)}\)} is Lemma \ref{lem:intermediate-old-hom}. For the
target \(E_{\boldsymbol\Delta}\) and the isogeny
\(\mathrm{id}_{E_{\boldsymbol\Delta}}\), condition \hyperlink{tower-condition-T2}{\(\mathrm{(T2)}\)} is Lemma
\ref{lem:intermediate-two-cusp}, with
\(c_{\mathcal X}=c_{\boldsymbol\Delta}\). Condition \hyperlink{tower-condition-T3}{\(\mathrm{(T3)}\)} consists
of Proposition \ref{prop:intermediate-connected-pushforward} and
\eqref{eq:tower-map-compatibility}. Theorem
\ref{thm:tower-optimal-degree} therefore yields the first divisibility in
\eqref{eq:intermediate-optimal-divisibility} and
\(\mathcal D_{c_{\boldsymbol\Delta}}(R)\mid
c_{\boldsymbol\Delta}^{\Omega(R)}\). The first divisibility in
\eqref{eq:intermediate-optimal-divisibility} therefore implies the second.
\end{proof}

The specialization to \(X_0\) improves the denominator factor obtained in
Section \ref{sec:denominators}. Let \(R>1\), let
\(\ell\mid c\), and put \(a=\ord_\ell(c)\) and \(m=\ord_\ell(R)\). The
\(\ell\)-adic exponents in the factor from \eqref{eq:D-factor} and in
\(\mathcal D_c(R)\) are, respectively,
\(a\max\{1,m\}\) and \(a+\lfloor m/2\rfloor\). One has
\(a+\lfloor m/2\rfloor\le a\max\{1,m\}\), with strict inequality precisely
when \(m\ge3\), or when \(m=2\) and \(a\ge2\). For \(c=1\), both factors are
\(1\).

\begin{corollary}\label{cor:x0-sharp-specialization}
With the notation of Section \ref{sec:denominators}, one has the
\(X_0\)-lattice inclusion
\[
        \mathcal D_{c_{u\circ\pi_E}}(R)\Hom_\Q(J_0(N),E')
        \subseteq
        \bigoplus_{r\mid R}\Z\Phi_{r,u}\, .
\]
Moreover, if \(E''/\Q\) is \(\Q\)-isogenous to \(E\), then every nonconstant
morphism \(g:X_0(N)\to E''\) over \(\Q\) satisfies
\[
        \deg\pi_E\mid \mathcal D_{c_E}(R)\deg g\, .
\]
\end{corollary}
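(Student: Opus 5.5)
The plan is to recognize the \(X_0\)-tower as the special case \(\Delta_L=(\Z/L\Z)^\times\) of a compatible diamond tower, or more directly to verify conditions \hyperlink{tower-condition-T1}{\(\mathrm{(T1)}\)}--\hyperlink{tower-condition-T3}{\(\mathrm{(T3)}\)} for \(\mathcal X(L)=X_0(L)\), \(P_L=\infty\), \(\Phi_M=\Phi_E\), \(E_0=E\), and \(c_{\mathcal X,u}=c_{u\circ\pi_E}\). With these conditions in hand, Theorem~\ref{thm:tower-denominator-criterion} gives the lattice inclusion, and Theorem~\ref{thm:tower-optimal-degree} (with \(u=\mathrm{id}_E\), so that \(c_{\mathcal X}=c_E\)) gives the divisibility for an arbitrary isogenous target \(E''\).

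The conditions follow from results already proved for \(X_0\), so the direct verification is short.
\begin{itemize}
\item[(T1)] The rational basis statement is Theorem~\ref{thm:rational-old-basis}. Primitivity of \(u\circ\Phi_E\) at level \(M\) is Lemma~\ref{lem:optimal-primitivity}, which uses that \(E\) is the strong Weil curve, so \(\Phi_E\) is the optimal quotient with geometrically connected kernel.
\item[(T2)] This is the localized two-cusp bound. Given \(G=\sum x_r\Phi_{r,u}\in\Hom_\Q(J_0(N),E')\otimes\Z_{(\ell)}\), choose \(t\) prime to \(\ell\) with \(tG\) integral. By Lemma~\ref{lem:fixed-target-manin} and Lemma~\ref{lem:q-expansion-degeneracy}, \((tG\circ j_N)^*\omega_{E'}=tc_{u\circ\pi_E}\sum x_r\eta_r\). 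Lemma~\ref{lem:two-cusp} then gives \(\gcd(r,R/r)\,t\,c_{u\circ\pi_E}x_r\in\Z\), and since \(t\) is an \(\ell\)-adic unit we get \eqref{eq:tower-two-cusp-bound}. This is exactly the computation already done in Proposition~\ref{prop:remove} and in the \(m\le 1\) step of Theorem~\ref{thm:denominator}, but here it is kept for all \(r\).
\item[(T3)] When \(\ell^2\mid R\), we have \(\ell^2\mid N\), so Proposition~\ref{prop:connected-degeneracy-kernel} gives surjectivity and a geometrically connected kernel for \((\delta_\ell)_*\). The identity \(\Phi_{\ell s,u,N,M}=\Phi_{s,u,N',M}\circ(\delta_\ell)_*\) follows from \(\iota_{s,N',M}\circ\iota_{\ell,N,N'}=\iota_{\ell s,N,M}\) in Definition~\ref{def:degeneracy}, as displayed in the proof of Theorem~\ref{thm:denominator}. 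The compatibility \eqref{eq:tower-map-compatibility} and the condition that \(\infty\) is preserved hold by the same definition.
\end{itemize}

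For the second assertion, apply the same verification with target \(E\) and \(u=\mathrm{id}_E\). Here \(c_{\mathrm{id}_E\circ\pi_E}=c_E\), so Theorem~\ref{thm:tower-optimal-degree} applies. Its proof already transfers (T1) to any isogenous target \(E''\) using Corollary~\ref{cor:optimal-quotient-primitivity}, and it yields \(\deg\pi_E\mid\mathcal D_{c_E}(R)\deg g\), where \(\pi_M=\Phi_E\circ j_M=\pi_E\).

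The only step needing care is (T2). The tower condition is phrased for elements of the \(\Z_{(\ell)}\)-localized Hom group rather than integral ones, so the prime-to-\(\ell\) clearing must be done explicitly. One must also check that the Fricke argument in Lemma~\ref{lem:two-cusp} is valid over \(\Q\). It is, because \(w_N\) is a \(\Q\)-automorphism of \(X_0(N)\), so no Galois descent is required, unlike in Lemma~\ref{lem:intermediate-two-cusp}.
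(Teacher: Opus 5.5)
Your proof is correct, but it verifies the hypotheses of the abstract criterion with different lemmas than the paper uses.

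The paper's proof takes the full diamond tower \(\Delta_L=(\Z/L\Z)^\times\) and quotes Theorems~\ref{thm:intermediate-tower} and~\ref{thm:intermediate-optimal}. There \(\mathrm{(T1)}\)--\(\mathrm{(T3)}\) come from:
Lemma~\ref{lem:intermediate-old-hom};
Lemma~\ref{lem:intermediate-two-cusp}, which uses integral \(q\)-expansions on \(\mathcal X_\mu\) and Galois descent for the Fricke map;
Proposition~\ref{prop:intermediate-connected-pushforward}, which uses Armstrong's theorem and parabolic generation.

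You instead feed the \(X_0\)-specific inputs of Section~\ref{sec:denominators} directly into Theorems~\ref{thm:tower-denominator-criterion} and~\ref{thm:tower-optimal-degree}:
Theorem~\ref{thm:rational-old-basis} with Lemma~\ref{lem:optimal-primitivity} for \(\mathrm{(T1)}\);
Lemma~\ref{lem:two-cusp}, on the smooth model \(\mathcal X_0(N)^\infty\) with the \(\Q\)-rational involution \(w_N\), for \(\mathrm{(T2)}\);
the total-ramification Proposition~\ref{prop:connected-degeneracy-kernel} for \(\mathrm{(T3)}\).

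The paper's route is uniform: \(X_0\) is one member of the family, so nothing is rechecked.

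Your route stays inside the usual model of \(X_0\) with its cusp \(\infty\). This buys two things.
\begin{enumerate}
\item The constant in \(\mathrm{(T2)}\) is visibly the \(c_{u\circ\pi_E}\) of Section~\ref{sec:denominators}. You never need to match it with \(c_{u\circ\pi_{\boldsymbol\Delta}}\), which the paper computes in the \(\mathcal X_\mu\)-model, whose cusp \(\infty\) the paper identifies with the usual cusp \(0\). The paper's one-line specialization leaves that identification tacit.
\item It shows that the improvement of \(\mathcal D_c(R)\) over \eqref{eq:D-factor} needs no new input. At a prime \(\ell\mid c\), the criterion applies Lemma~\ref{lem:two-cusp} to every \(r\mid R\) and stops. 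The descent in Theorem~\ref{thm:denominator} instead loses a factor \(\ell^{a}\) at each step.
\end{enumerate}
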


\begin{proof}
Take the full diamond tower \(\Delta_L=(\Z/L\Z)^\times\), for which
\(X_{\Delta_L}(L)=X_0(L)\), and apply Theorems
\ref{thm:intermediate-tower} and \ref{thm:intermediate-optimal}.
\end{proof}

\subsection{The case of \texorpdfstring{\(X_1\)}{X1}}

For each positive integer \(L\), let \(X_1(L)\) be the coarse modular curve and
let \(\infty\in X_1(L)(\Q)\) denote the rational cusp obtained from the canonical
\(\Z\)-point of \(\mathcal X_\mu(L)\) described in Subsection
\ref{subsec:compatible-diamond-towers}.
Write \(J_1(L)=J(X_1(L))\), and retain the notation \(\mathcal D_c\) from
\eqref{eq:sharp-D-definition}. Let \(E_1/\Q\) be the \(X_1(M)\)-optimal elliptic
curve attached to a normalized rational newform
\(f\in S_2(\Gamma_0(M))\) of exact level \(M\).
Let \(\Phi_1:J_1(M)\twoheadrightarrow E_1\) be the optimal quotient, and let
\(\pi_{E_1}:X_1(M)\to E_1\) be the optimal parametrization based at \(\infty\).
Let \(c_1\) be the Manin--Stevens constant of \(\pi_{E_1}\), defined as in
\eqref{eq:intermediate-manin} with \(u=\mathrm{id}_{E_1}\).

Fix an elliptic curve \(E'/\Q\) in the \(\Q\)-isogeny class of \(E_1\) and a
generator \(u:E_1\to E'\). Define \(c_{u\circ\pi_{E_1}}\) as in
\eqref{eq:intermediate-manin}. Both \(c_1\) and \(c_{u\circ\pi_{E_1}}\) are
positive integers by Lemma \ref{lem:intermediate-old-hom}.
Stevens's conjecture \cite[Conjecture I]{StevensStickelberger} can be stated
in terms of \(c_{u\circ\pi_{E_1}}\) as follows.

\begin{conjecture}[Stevens]
\label{conj:stevens-manin}
For every elliptic curve \(E'/\Q\) in the \(\Q\)-isogeny class of \(E_1\)
and every generator \(u\) of \(\Hom_\Q(E_1,E')\), one has
\(c_{u\circ\pi_{E_1}}=1\).
\end{conjecture}

The original formulation asserts the existence of a parametrization
\(\varphi:X_1(M)\to E'\) over \(\Q\) satisfying
\(\varphi^*\omega_{E'}=f(q)dq/q\) for a minimal N\'eron differential
\(\omega_{E'}\). By the level-\(M\) case of Lemma
\ref{lem:intermediate-old-hom}, after translation and a choice of sign,
\(\varphi=[n]\circ u\circ\pi_{E_1}\) for some \(n\in\Z_{>0}\).
Comparison of differentials gives \(1=n c_{u\circ\pi_{E_1}}\), so
\(c_{u\circ\pi_{E_1}}=1\). Conversely, if \(c_{u\circ\pi_{E_1}}=1\), take
\(\varphi=u\circ\pi_{E_1}\).

\v{C}esnavi\v{c}ius \cite{CesnaviciusManinSemistable} proves that
\(\ord_p(c_1)=0\) whenever \(\ord_p(M)\le 1\), so \(c_1=1\) for squarefree
\(M\). By Lemma~2.12 of the same paper, \(c_1=1\) also follows from
\(c_E=1\) for the corresponding \(X_0(M)\)-optimal curve. Cremona's tables
\cite{AgasheRibetStein,CremonaData} therefore imply \(c_1=1\) for
\(M<400000\).

These conclusions concern the optimal target \(E_1\). For an arbitrary
target \(E'\) in its \(\Q\)-isogeny class, the integral-basis result in
Corollary~\ref{cor:x1-stevens-lattice} assumes
Conjecture~\ref{conj:stevens-manin} for \(E'\).

For every multiple \(L\) of \(M\) and every \(r\mid L/M\), write
\(\iota_{r,L,M}:X_1(L)\to X_1(M)\) for the standard degeneracy map
induced analytically by \(\tau\mapsto r\tau\).

\begin{theorem}\label{thm:x1-final}
Let \(N\) be a multiple of \(M\). For every
elliptic curve \(E''/\Q\) in the \(\Q\)-isogeny class of \(E_1\) and every
nonconstant morphism \(g:X_1(N)\to E''\) over \(\Q\), one has
\begin{equation}\label{eq:x1-optimal-divisibility}
        \deg\pi_{E_1}\mid\mathcal D_{c_1}(N/M)\deg g
        \quad\text{and}\quad
        \deg\pi_{E_1}\mid c_1^{\Omega(N/M)}\deg g.
\end{equation}
In particular, if \(c_1=1\), then \(\deg\pi_{E_1}\mid\deg g\).
\end{theorem}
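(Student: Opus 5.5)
The plan is to obtain Theorem~\ref{thm:x1-final} as the specialization of Theorem~\ref{thm:intermediate-optimal} to the diamond tower \(\Delta_L=\{\pm1\}\) for every multiple \(L\) of \(M\). In that case \(\Gamma_{\Delta_L}(L)=\pm\Gamma_1(L)\), so \(X_{\Delta_L}(L)\) is the coarse curve \(X_1(L)\). The first step is to check that this family is a compatible diamond tower. For \(L\mid L'\), reduction \(\{\pm1\}\subseteq(\Z/L'\Z)^\times\to\{\pm1\}\subseteq(\Z/L\Z)^\times\) is surjective, which is \eqref{eq:diamond-surjectivity}. The degeneracy maps \(\iota^{\boldsymbol\Delta}_{r,N,L}\) are then the standard maps \(\iota_{r,L,M}\) on \(X_1\), induced by \(\tau\mapsto r\tau\), and the chosen cusp \(\infty\) is the image of the canonical \(\Z\)-point of \(\mathcal X_\mu(L)\), as in Subsection~\ref{subsec:compatible-diamond-towers}.

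Next I identify the data. The optimal quotient \(\Phi_{\boldsymbol\Delta}:J_{\Delta_M}(M)\to E_{\boldsymbol\Delta}\) attached to \(f\) is \(\Phi_1:J_1(M)\to E_1\). Hence \(\pi_{\boldsymbol\Delta}=\pi_{E_1}\), and the constant \(c_{\boldsymbol\Delta}\) is the Manin--Stevens constant \(c_1\), since both are defined by the same normalization \eqref{eq:intermediate-manin} with \(u=\mathrm{id}_{E_1}\). The hypotheses of Theorem~\ref{thm:intermediate-optimal} are verified for every compatible diamond tower through Lemma~\ref{lem:intermediate-old-hom}, Lemma~\ref{lem:intermediate-two-cusp} and Proposition~\ref{prop:intermediate-connected-pushforward}. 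That theorem therefore gives both divisibilities in \eqref{eq:x1-optimal-divisibility} for every \(E''\) isogenous to \(E_1\) and every nonconstant \(g:X_1(N)\to E''\). The case \(c_1=1\) follows because \(\mathcal D_1(R)=1\).

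The main point needing care is the choice of model. The paper fixes \(X_1(L)\) through \(\mathcal X_\mu(L)\) and takes the cusp \(\infty\) to be the image of its canonical section. I must make sure that the \(X_1(M)\)-optimal parametrization \(\pi_{E_1}\) in the statement is taken with respect to this same model and base point, so that its degree and Manin constant match \(\pi_{\boldsymbol\Delta}\) and \(c_{\boldsymbol\Delta}\). This should be harmless. Degrees of morphisms are unchanged by translation and by the \(\Q\)-isomorphism between the two standard models. The optimal quotient is intrinsic to \(J_1(M)\) up to that isomorphism, and the normalization \eqref{eq:intermediate-manin} is already stated in the \(\mathcal X_\mu\)-model.
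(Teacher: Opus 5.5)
Your proposal is correct and matches the paper's proof: the paper also gets the theorem by specializing Theorem~\ref{thm:intermediate-optimal} to the compatible diamond tower \(\Delta_L=\{\pm1\}\), for which \(X_{\Delta_L}(L)=X_1(L)\) as a coarse modular curve in the \(\mathcal X_\mu\)-model. Your additional checks are exactly the bookkeeping the paper leaves implicit: surjectivity of reduction on \(\{\pm1\}\), the identifications \(\pi_{\boldsymbol\Delta}=\pi_{E_1}\) and \(c_{\boldsymbol\Delta}=c_1\) with the same base point and model, and \(\mathcal D_1(R)=1\).
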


\begin{proof}
For \(\Delta_L=\{\pm1\}\), the compatible diamond tower satisfies
\(X_{\Delta_L}(L)=X_1(L)\) as a coarse modular curve. Hence Theorem
\ref{thm:intermediate-optimal} gives
\eqref{eq:x1-optimal-divisibility}.
\end{proof}

For the fixed target \(E'\), Theorem
\ref{thm:intermediate-tower} specializes to the following \(X_1\)-old-lattice
inclusion.

\begin{theorem}\label{thm:x1-fixed-target}
Let \(N\) be a multiple of \(M\). Then
\[
        \mathcal D_{c_{u\circ\pi_{E_1}}}(N/M)\Hom_\Q(J_1(N),E')
        \subseteq
        \bigoplus_{r\mid N/M}\Z
        \bigl(u\circ\Phi_1\circ\iota_{r,N,M,*}\bigr).
\]
If \(c_{u\circ\pi_{E_1}}=1\), the old maps on the right-hand side form a
\(\Z\)-basis of
\(\Hom_\Q(J_1(N),E')\).
\end{theorem}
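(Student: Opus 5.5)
The plan is to specialize Theorem~\ref{thm:intermediate-tower} to the compatible diamond tower with \(\Delta_L=\{\pm1\}\) for every multiple \(L\) of \(M\). The work lies in matching each ingredient of that tower with the \(X_1\) data fixed above.

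First, I would check that this family is a compatible diamond tower. Reduction sends \(\{\pm1\}\subseteq(\Z/L'\Z)^\times\) onto \(\{\pm1\}\subseteq(\Z/L\Z)^\times\), so \eqref{eq:diamond-surjectivity} holds. Moreover \(\Gamma_{\{\pm1\}}(L)=\pm\Gamma_1(L)\) has the same image in \(\mathrm{PSL}_2\) as \(\Gamma_1(L)\). Hence \(X_{\Delta_L}(L)=X_1(L)\) as coarse curves, \(J_{\Delta_L}(L)=J_1(L)\), and the tower degeneracy maps \(\iota^{\boldsymbol\Delta}_{r,N,L}\) coincide with the standard maps \(\iota_{r,L,M}\) induced by \(\tau\mapsto r\tau\). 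The chosen base points agree as well: in both places \(\infty\) is the image of the canonical \(\Z\)-point of \(\mathcal X_\mu(L)\).

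Second, I would identify the optimal data. The newform \(f\) is the one attached to \(E_1\), and the optimal elliptic quotient \(\Phi_{\boldsymbol\Delta}:J_{\Delta_M}(M)\to E_{\boldsymbol\Delta}\) is by definition \(\Phi_1:J_1(M)\to E_1\). Therefore \(\pi_{\boldsymbol\Delta}=\pi_{E_1}\). Comparing the normalizations in \eqref{eq:intermediate-manin}, for the given generator \(u:E_1\to E'\) we get \(c_{u\circ\pi_{\boldsymbol\Delta}}=c_{u\circ\pi_{E_1}}\). The old maps \(\Phi^{\boldsymbol\Delta}_{r,u,N,M}\) are then exactly \(u\circ\Phi_1\circ\iota_{r,N,M,*}\).

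With these identifications, \eqref{eq:intermediate-lattice} is precisely the asserted inclusion. The integral-basis clause is the second assertion of Theorem~\ref{thm:intermediate-tower}. It also follows directly: once \(c_{u\circ\pi_{E_1}}=1\) we have \(\mathcal D_1(N/M)=1\), the old maps are integral, and they are linearly independent by Lemma~\ref{lem:intermediate-old-hom}.

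The only step needing care is the identification of the coarse curve \(X_1(L)\), with its \(\mathcal X_\mu\)-model cusp \(\infty\), with the diamond-tower curve used in Subsection~\ref{subsec:compatible-diamond-towers}. That subsection already adopts the \(\mathcal X_\mu(L)\)-model convention, so this identification should be a matter of bookkeeping.
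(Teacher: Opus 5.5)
Your proposal is correct and follows the paper's proof: the paper also obtains the statement by specializing Theorem~\ref{thm:intermediate-tower} to the compatible diamond tower \(\Delta_L=\{\pm1\}\), using the identification \(X_{\Delta_L}(L)=X_1(L)\). Your explicit checks spell out the identifications the paper leaves implicit: \(\Gamma_{\{\pm1\}}(L)=\pm\Gamma_1(L)\), \(\Phi_{\boldsymbol\Delta}=\Phi_1\), \(c_{u\circ\pi_{\boldsymbol\Delta}}=c_{u\circ\pi_{E_1}}\), and matching old maps (one small index slip: \(\iota^{\boldsymbol\Delta}_{r,N,L}\) should be matched with \(\iota_{r,N,L}\), not \(\iota_{r,L,M}\)).
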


\begin{proof}
For the tower \(\Delta_L=\{\pm1\}\) and the identifications
\(X_{\Delta_L}(L)=X_1(L)\), Theorem \ref{thm:intermediate-tower} specializes to
the lattice inclusion and its integral-basis consequence.
\end{proof}

Combining the semistable results of Vatsal
\cite{VatsalMultiplicative} and \v{C}esnavi\v{c}ius
\cite{CesnaviciusManinSemistable} with Theorem
\ref{thm:x1-fixed-target}, we obtain an unconditional \(\Z[1/2]\)-basis
theorem at higher levels.

\begin{theorem}\label{cor:x1-vatsal-lattice}
Assume that \(M\) is squarefree. For every multiple \(N\) of \(M\), the maps
\[
        \bigl\{u\circ\Phi_1\circ\iota_{r,N,M,*}:r\mid N/M\bigr\}
\]
form a \(\Z[1/2]\)-basis of
\(\Hom_\Q(J_1(N),E')\otimes_\Z\Z[1/2]\).
\end{theorem}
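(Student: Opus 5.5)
By Theorem~\ref{thm:x1-fixed-target}, applied to \(E'\) and a generator \(u\), we have
\[
\mathcal D_{c_{u\circ\pi_{E_1}}}(N/M)\Hom_\Q(J_1(N),E')\subseteq\bigoplus_{r\mid N/M}\Z\Phi_r,
\qquad \Phi_r=u\circ\Phi_1\circ\iota_{r,N,M,*}.
\]
The reverse inclusion is trivial, since each old map is an integral homomorphism. After tensoring with \(\Z[1/2]\), the claim therefore reduces to showing that \(\mathcal D_{c_{u\circ\pi_{E_1}}}(N/M)\) is a power of \(2\). By the definition \eqref{eq:sharp-D-definition}, \(\mathcal D_c(R)\) is supported on the primes dividing \(c\). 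So it suffices to prove that \(c_{u\circ\pi_{E_1}}\) is a power of \(2\) for every target \(E'\) in the isogeny class and every generator \(u\) of \(\Hom_\Q(E_1,E')\). Linear independence over \(\Z[1/2]\) is already given by Theorem~\ref{thm:rational-old-basis}'s analogue in Lemma~\ref{lem:intermediate-old-hom}, so the lattice inclusion then yields a \(\Z[1/2]\)-basis.

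To control \(c_{u\circ\pi_{E_1}}\), write \(c_{u\circ\pi_{E_1}}=c_1c_u\), where \(u^*\omega_{E'}=c_u\omega_{E_1}\). This factorization is proved exactly as in Lemma~\ref{lem:fixed-target-manin}, since that argument only uses the Néron mapping property. For squarefree \(M\), \v{C}esnavi\v{c}ius gives \(c_1=1\), as recalled above. It remains to show that \(c_u\) is a power of \(2\) when \(u\) generates \(\Hom_\Q(E_1,E')\). This is where Vatsal's semistable result enters. For semistable \(M\), Vatsal shows that the \(X_1(M)\)-optimal curve is characterized, at least away from \(2\), by having the minimal differential pulled back to \(f(q)dq/q\). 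In the form we would cite, he proves Stevens's conjecture up to powers of \(2\): for every \(E'\) in the class there is a parametrization \(\varphi:X_1(M)\to E'\) with \(\varphi^*\omega_{E'}=2^kf(q)dq/q\). By the level-\(M\) case of Lemma~\ref{lem:intermediate-old-hom}, after translation and sign we have \(\varphi=[n]\circ u\circ\pi_{E_1}\). Comparing differentials gives \(2^k=nc_{u\circ\pi_{E_1}}\), so \(c_{u\circ\pi_{E_1}}\) divides a power of \(2\).

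The main obstacle is extracting precisely this statement from Vatsal's paper, namely matching his normalization of optimal curves and of the Manin--Stevens constant with \(c_{u\circ\pi_{E_1}}\), including the choice of cusp in the \(\mathcal X_\mu\)-model. If Vatsal's result is phrased instead as \(c_u\) being a unit away from \(2\) for isogenies out of the \(X_1\)-optimal curve, we would argue directly from that: \(c_u\mid\deg u\) by \eqref{eq:cu-divides-degree-u}, and for odd primes \(p\mid\deg u\) Vatsal's Néron-model analysis of \(p\)-isogenies in the semistable case shows \(p\nmid c_u\). Either route gives that \(c_{u\circ\pi_{E_1}}\) is a power of \(2\), hence that \(\mathcal D_{c_{u\circ\pi_{E_1}}}(N/M)\) is a unit in \(\Z[1/2]\), which completes the proof.
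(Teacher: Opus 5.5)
Your reduction is the same as the paper's. Theorem~\ref{thm:x1-fixed-target}, together with the fact that \(\mathcal D_c(R)\) is supported on the primes dividing \(c\), reduces the claim to showing that \(c_{u\circ\pi_{E_1}}=c_1c_u\) is a power of \(2\); \(c_1=1\) comes from \v{C}esnavi\v{c}ius and the \(2\)-power input from Vatsal.

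The citation issue you flag is settled in the paper by passing through the curve \(E_{\min}\) of minimal Faltings height. Stevens's Theorem~2.3 gives isogenies \(\alpha:E_{\min}\to E_1\) and \(\beta:E_{\min}\to E'\) that are \'etale on N\'eron models, so \(\alpha^*\omega_{E_1}=\beta^*\omega_{E'}=\omega_{E_{\min}}\), and Vatsal's Theorem~1.10 says \(\deg\alpha\) is a power of \(2\). Writing \(\beta\circ\widehat\alpha=[n]\circ u\) then gives \((\deg\alpha)\omega_{E_1}=nc_{u\circ\pi_{E_1}}\omega_{E_1}\). This is your ``Stevens up to powers of \(2\)'' statement with \(\varphi=\beta\circ\widehat\alpha\circ\pi_{E_1}\) and \(2^k=\deg\alpha\), so your vaguer fallback via odd-degree isogenies is not needed.
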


\begin{proof}
Let \(E_{\min}\) be the curve of minimal Faltings--Parshin height in the
\(\Q\)-isogeny class of \(E_1\). By Stevens
\cite[Theorem~2.3]{StevensStickelberger}, there are isogenies
\(\alpha:E_{\min}\to E_1\) and \(\beta:E_{\min}\to E'\) that extend to
\'{e}tale morphisms of N\'eron models over \(\Z\). Vatsal
\cite[Theorem~1.10]{VatsalMultiplicative} proves that
\(\deg\alpha\) is a power of \(2\).
Choose a minimal N\'eron differential \(\omega_{E_{\min}}\) and the signs of
\(\alpha\) and \(\beta\) so that
\(\alpha^*\omega_{E_1}=\beta^*\omega_{E'}=\omega_{E_{\min}}\).
Write \(\widehat\alpha:E_1\to E_{\min}\) for the dual isogeny.
Since \(u\) generates \(\Hom_\Q(E_1,E')\), one has
\(\beta\circ\widehat\alpha=[n]\circ u\) for some nonzero \(n\in\Z\).
Moreover, \(c_1=1\) by \v{C}esnavi\v{c}ius
\cite{CesnaviciusManinSemistable}. Hence
\[
        (\deg\alpha)\omega_{E_1}
        =(\beta\circ\widehat\alpha)^*\omega_{E'}
        =n c_{u\circ\pi_{E_1}}\omega_{E_1}.
\]
Thus \(c_{u\circ\pi_{E_1}}\) is a power of \(2\), so
\(\mathcal D_{c_{u\circ\pi_{E_1}}}(N/M)\) is a unit in \(\Z[1/2]\).
After tensoring with \(\Z[1/2]\), the inclusion in Theorem
\ref{thm:x1-fixed-target} is therefore an equality.
\end{proof}

Moreover, Conjecture~\ref{conj:stevens-manin} implies an integral basis at every higher
level, without assuming that \(M\) is squarefree.

\begin{corollary}\label{cor:x1-stevens-lattice}
Assume Conjecture \ref{conj:stevens-manin} for \(E'\). Then, for every multiple
\(N\) of \(M\), the maps
\[
        \bigl\{u\circ\Phi_1\circ\iota_{r,N,M,*}:r\mid N/M\bigr\}
\]
form a \(\Z\)-basis of \(\Hom_\Q(J_1(N),E')\).
\end{corollary}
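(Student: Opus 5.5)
The corollary should follow from Theorem~\ref{thm:x1-fixed-target} once the Manin constant in its integral-basis clause is identified with the one appearing in Conjecture~\ref{conj:stevens-manin}.

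First, apply Conjecture~\ref{conj:stevens-manin} to the fixed target \(E'\) and the chosen generator \(u\) of \(\Hom_\Q(E_1,E')\). This gives \(c_{u\circ\pi_{E_1}}=1\). Here \(c_{u\circ\pi_{E_1}}\) is defined by \eqref{eq:intermediate-manin}, with the tower \(\Delta_L=\{\pm1\}\) and \(\pi_{\boldsymbol\Delta}=\pi_{E_1}\). This is exactly the constant that Theorem~\ref{thm:x1-fixed-target} uses. The discussion after the conjecture confirms that the original formulation, the existence of some \(\varphi:X_1(M)\to E'\) with \(\varphi^*\omega_{E'}=f(q)\,dq/q\), is equivalent to \(c_{u\circ\pi_{E_1}}=1\). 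So the conclusion does not depend on which formulation is assumed or on which generator \(u\) is chosen, since the conjecture covers every generator.

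With \(c_{u\circ\pi_{E_1}}=1\), the factor \(\mathcal D_{c_{u\circ\pi_{E_1}}}(N/M)\) from \eqref{eq:sharp-D-definition} equals \(1\). For \(N/M>1\) this is an empty product over \(\ell\mid 1\), and for \(N/M=1\) it holds by definition. Theorem~\ref{thm:x1-fixed-target} then gives
\[
\Hom_\Q(J_1(N),E')\subseteq\bigoplus_{r\mid N/M}\Z\,\bigl(u\circ\Phi_1\circ\iota_{r,N,M,*}\bigr).
\]
The reverse inclusion holds because each old map is a genuine homomorphism over \(\Q\). Linear independence over \(\Z\) comes from the \(\Q\)-basis statement of Lemma~\ref{lem:intermediate-old-hom}, that is, condition (T1). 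Together these show the old maps form a \(\Z\)-basis, and this is the final clause of Theorem~\ref{thm:x1-fixed-target}.

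There is essentially no obstacle here. The only point to check is that the constant in the conjecture is the same as the one in the theorem for this target and generator, which the discussion following Conjecture~\ref{conj:stevens-manin} already establishes.
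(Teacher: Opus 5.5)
Your proposal is correct and follows the paper's proof: Conjecture~\ref{conj:stevens-manin} for \(E'\) gives \(c_{u\circ\pi_{E_1}}=1\), and the integral-basis clause of Theorem~\ref{thm:x1-fixed-target} then applies directly. Your extra checks (that \(\mathcal D_1(N/M)=1\), the reverse inclusion, and linear independence) are already contained in that theorem. They are harmless but not needed.
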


\begin{proof}
Conjecture~\ref{conj:stevens-manin} gives \(c_{u\circ\pi_{E_1}}=1\), so
Theorem~\ref{thm:x1-fixed-target} applies.
\end{proof}

For each multiple \(N\) of \(M\), let \(A_{1,N}\) be the matrix
\(A_{\boldsymbol\Delta,N}\) defined by
\eqref{eq:intermediate-degree-matrix} for the tower
\(\Delta_L=\{\pm1\}\).
Under Conjecture~\ref{conj:stevens-manin} for \(E'\), the degree is determined exactly by the quadratic
form \(\boldsymbol b^{T}A_{1,N}\boldsymbol b\) associated with the \(\Z\)-basis
in Corollary~\ref{cor:x1-stevens-lattice}.

\begin{corollary}\label{cor:x1-degree-form}
Under the hypotheses and notation of Theorem \ref{thm:x1-fixed-target},
assume Conjecture~\ref{conj:stevens-manin} for \(E'\).
For every nonconstant morphism \(g:X_1(N)\to E'\), its associated
homomorphism \(G_g\) based at \(\infty\) has a unique nonzero coefficient
vector \(\boldsymbol b=(b_r)_{r\mid N/M}\in
\Z^{\{r:\,r\mid N/M\}}\) satisfying
\[
        G_g=\sum_{r\mid N/M}b_r
        \bigl(u\circ\Phi_1\circ\iota_{r,N,M,*}\bigr),
        \qquad
        \deg g=\deg(u\circ\pi_{E_1})\,
        \boldsymbol b^{T}A_{1,N}\boldsymbol b.
\]
\end{corollary}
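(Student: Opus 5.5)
The statement to prove is Corollary~\ref{cor:x1-degree-form}. It is the \(X_1\) specialization of Corollary~\ref{cor:intermediate-degree-form}, in the same way that Theorem~\ref{thm:x1-fixed-target} specializes Theorem~\ref{thm:intermediate-tower}. The plan is to reduce to that corollary for the tower \(\Delta_L=\{\pm1\}\), where \(X_{\Delta_L}(L)=X_1(L)\), \(\Phi_{\boldsymbol\Delta}=\Phi_1\), \(\pi_{\boldsymbol\Delta}=\pi_{E_1}\) and \(A_{\boldsymbol\Delta,N}=A_{1,N}\) by definition.

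First, Conjecture~\ref{conj:stevens-manin} for \(E'\) and the chosen generator \(u\) gives \(c_{u\circ\pi_{E_1}}=1\). This is exactly the hypothesis \(c_{u\circ\pi_{\boldsymbol\Delta}}=1\) required in Corollary~\ref{cor:intermediate-degree-form}. Applying that corollary then produces, for every nonconstant \(g:X_1(N)\to E'\), an integral coefficient vector \(\boldsymbol b\) with \(G_g=\sum_r b_r\Phi^{\boldsymbol\Delta}_{r,u,N,M}\) and \(\deg g=\deg(u\circ\pi_{E_1})\,\boldsymbol b^{T}A_{1,N}\boldsymbol b\).

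It remains to record three facts:
\begin{enumerate}
\item \(\Phi^{\boldsymbol\Delta}_{r,u,N,M}=u\circ\Phi_1\circ\iota_{r,N,M,*}\), since the degeneracy maps of the tower are the standard ones.
\item The vector \(\boldsymbol b\) is unique, because by Corollary~\ref{cor:x1-stevens-lattice} the old maps form a \(\Z\)-basis.
\item The vector \(\boldsymbol b\) is nonzero. Since \(g\) is nonconstant, \(G_g\neq0\).
\end{enumerate}

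The only point needing care is matching base points. The cusp \(\infty\) used for \(G_g\) must be the \(\mathcal X_\mu\)-model cusp fixed in Subsection~\ref{subsec:compatible-diamond-towers}. This is guaranteed by the conventions adopted in that subsection, so no further obstacle is expected.
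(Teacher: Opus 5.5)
Your proposal is correct and follows the paper's own proof. The paper's proof has the same two steps: Conjecture~\ref{conj:stevens-manin} gives \(c_{u\circ\pi_{E_1}}=1\), and then Corollary~\ref{cor:intermediate-degree-form} is applied to the tower \(\Delta_L=\{\pm1\}\). Your three extra remarks (the identification of the old maps, uniqueness, and \(\boldsymbol b\neq0\)) are already part of that corollary's conclusion, so they are harmless but redundant.
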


\begin{proof}
Conjecture~\ref{conj:stevens-manin} gives \(c_{u\circ\pi_{E_1}}=1\).
Apply Corollary \ref{cor:intermediate-degree-form} to the tower
\(\Delta_L=\{\pm1\}\).
\end{proof}


\begin{thebibliography}{99}
\bibitem{AgasheRibetStein} A. Agashe, K. Ribet, and W. Stein, \emph{The Manin constant}, Pure Appl. Math. Q. 2 (2006), no. 2, 617--636.
\bibitem{AgasheRibetSteinCongruence} A. Agashe, K. Ribet, and W. Stein, \emph{The modular degree, congruence primes, and multiplicity one}, in \emph{Number Theory, Analysis and Geometry}, Springer, 2012, 19--49.
\bibitem{Armstrong} M. A. Armstrong, \emph{The fundamental group of the orbit space of a discontinuous group}, Proc. Cambridge Philos. Soc. 64 (1968), 299--301.
\bibitem{AtkinLehner} A. O. L. Atkin and J. Lehner, \emph{Hecke operators on $\Gamma_0(m)$}, Math. Ann. 185 (1970), 134--160.
\bibitem{BirkenhakeLange} C. Birkenhake and H. Lange, \emph{Complex Abelian Varieties}, 2nd ed., Grundlehren der mathematischen Wissenschaften 302, Springer-Verlag, 2004.
\bibitem{BLR} S. Bosch, W. L\"utkebohmert, and M. Raynaud, \emph{N\'eron Models}, Ergebnisse der Mathematik und ihrer Grenzgebiete 21, Springer, 1990.
\bibitem{CesnaviciusManinSemistable} K. \v{C}esnavi\v{c}ius, \emph{The Manin constant in the semistable case}, Compos. Math. 154 (2018), no. 9, 1889--1920.
\bibitem{CesnaviciusNeururerSaha} K. \v{C}esnavi\v{c}ius, M. Neururer, and A. Saha, \emph{The Manin constant and the modular degree}, J. Eur. Math. Soc. 26 (2024), no. 2, 573--637.
\bibitem{ConradEdixhovenStein} B. Conrad, B. Edixhoven, and W. Stein, \emph{$J_1(p)$ has connected fibers}, Doc. Math. 8 (2003), 331--408.
\bibitem{CremonaAlgorithms} J. E. Cremona, \emph{Algorithms for Modular Elliptic Curves}, 2nd ed., Cambridge University Press, 1997.
\bibitem{CremonaData} J. E. Cremona, \emph{Elliptic curve data}, \url{https://johncremona.github.io/ecdata/}, accessed 3 September 2026.
\bibitem{DerickxHwangJeonOrlic} M. Derickx, W. Hwang, D. Jeon, and P. Orli\'c, \emph{Modular curves $X_0(N)$ of density degree $5$}, arXiv:2503.08975v2, 2026.
\bibitem{DerickxOrlic} M. Derickx and P. Orli\'c, \emph{Modular curves $X_0(N)$ with infinitely many quartic points}, Res. Number Theory 10 (2024), Paper No. 42.
\bibitem{DiamondIm} F. Diamond and J. Im, \emph{Modular forms and modular curves}, in \emph{Seminar on Fermat's Last Theorem} (Toronto, ON, 1993--1994), CMS Conf. Proc. 17, American Mathematical Society, 1995, 39--133.
\bibitem{DiamondShurman} F. Diamond and J. Shurman, \emph{A First Course in Modular Forms}, Graduate Texts in Mathematics 228, Springer, 2005.
\bibitem{Faltings} G. Faltings, \emph{Endlichkeitss\"atze f\"ur abelsche Variet\"aten \"uber Zahlk\"orpern}, Invent. Math. 73 (1983), no. 3, 349--366.
\bibitem{GonzalezBielliptic} J. Gonz\'alez, \emph{Equations of bielliptic modular curves}, JP J. Algebra Number Theory Appl. 27 (2012), no. 1, 45--60.
\bibitem{SGA1} A. Grothendieck, \emph{Rev\^etements \'etales et groupe fondamental (SGA~1)}, Lecture Notes in Mathematics 224, Springer-Verlag, 1971.
\bibitem{Hatcher} A. Hatcher, \emph{Algebraic Topology}, Cambridge University Press, 2002.
\bibitem{IshiiMomose} N. Ishii and F. Momose, \emph{Hyperelliptic modular curves}, Tsukuba J. Math. 15 (1991), no. 2, 413--423.
\bibitem{JeonKimSchweizer} D. Jeon, C. H. Kim, and A. Schweizer, \emph{Bielliptic intermediate modular curves}, J. Pure Appl. Algebra 224 (2020), no. 1, 272--299.
\bibitem{KazalickiKohenCorrigendum} M. Kazalicki and D. Kohen, \emph{Corrigendum to ``On a special case of Watkins's conjecture''}, Proc. Amer. Math. Soc. 147 (2019), no. 10, 4563.
\bibitem{Li} W.-C. W. Li, \emph{Newforms and functional equations}, Math. Ann. 212 (1975), 285--315.
\bibitem{MilneJacobians} J. S. Milne, \emph{Jacobian Varieties}, in \emph{Arithmetic Geometry}, G. Cornell and J. H. Silverman (eds.), Springer, 1986, 167--212.
\bibitem{OggHyperelliptic} A. P. Ogg, \emph{Hyperelliptic modular curves}, Bull. Soc. Math. France 102 (1974), 449--462.
\bibitem{Poonen} B. Poonen, \emph{Rational Points on Varieties}, Graduate Studies in Mathematics 186, American Mathematical Society, 2017.
\bibitem{SilvermanAEC} J. H. Silverman, \emph{The Arithmetic of Elliptic Curves}, 2nd ed., Graduate Texts in Mathematics 106, Springer, 2009.
\bibitem{Stein} W. Stein, \emph{Modular Forms, a Computational Approach}, Graduate Studies in Mathematics 79, American Mathematical Society, 2007.
\bibitem{StevensStickelberger} G. Stevens, \emph{Stickelberger elements and modular parametrizations of elliptic curves}, Invent. Math. 98 (1989), no. 1, 75--106.
\bibitem{VatsalMultiplicative} V. Vatsal, \emph{Multiplicative subgroups of $J_0(N)$ and applications to elliptic curves}, J. Inst. Math. Jussieu 4 (2005), no. 2, 281--316.
\bibitem{Watkins} M. Watkins, \emph{Computing the modular degree of an elliptic curve}, Experiment. Math. 11 (2002), no. 4, 487--502.
\end{thebibliography}
\end{document}